\documentclass[10pt]{amsart}

\usepackage{amsmath,amssymb,amsthm}
\usepackage{array,multirow}
\usepackage{multicol}
\usepackage{enumerate} 
\usepackage{cases} 
\usepackage[unicode,linkcolor=blue,hyperindex]{hyperref}
\usepackage{graphicx}
\usepackage{mathtools}
\usepackage{cleveref}
\usepackage{todonotes}

\numberwithin{equation}{section} \numberwithin{figure}{section}

{\theoremstyle{remark} \newtheorem{remark}{Remark}}
\newtheorem{definition}{Definition}[section]
\newtheorem{proposition}{Proposition}[section]

\newtheorem{theorem}{Theorem}[section]
\newtheorem{corollary}{Corollary}[section]
\newtheorem{lemma}{Lemma}[section]
{\theoremstyle{remark} \newtheorem{example}{Example}[section]}
\newtheorem{hypothesis}{Hypothesis}[section]

\newcommand\Lap{(-\Delta)^s_p}

\newcommand{\SZ}{{\Pi_h}}

\def\calA{\mathcal{A}}
\def\calF{\mathcal{F}}

\def\calO{\mathcal{O}}
\def\calC{\mathcal{C}}
\def\calB{\mathcal{B}}

\newcommand{\Th}{{\mathcal {T}_h}}

\newcommand{\N}{\mathbb{N}}
\newcommand{\R}{\mathbb{R}}
\newcommand{\Rd}{{\mathbb{R}^d}}
\newcommand{\vn}{\textbf{n}}
\newcommand{\phii}{\varphi}
\newcommand{\pp}{\partial}
\newcommand{\eps}{\varepsilon}
\newcommand{\x}{\texttt{x}}
\newcommand{\wt}{\widetilde}
\newcommand{\dist}{\textrm{dist}}

\newcommand{\vp}{\varphi}

\newcommand{\Wzop}{ \mbox{ \raisebox{7.2pt} {\tiny$\circ$} \kern-10.7pt} {W}^1_p }
\newcommand{\supp}{\textrm{supp~}}
\newcommand{\iii}[1]{{\left\vert\kern-0.25ex\left\vert\kern-0.25ex\left\vert #1 
    \right\vert\kern-0.25ex\right\vert\kern-0.25ex\right\vert}}
\newcommand{\scalardelta}[1]{{( \kern-0.25ex ( #1 )\kern-0.25ex )}}


\begin{document} 
\title[Quasi-linear fractional-order operators]{Quasi-linear fractional-order operators in Lipschitz domains}

\begin{abstract}
  We prove Besov boundary regularity for solutions of the homogeneous Dirichlet problem for fractional-order
  quasi-linear operators with variable coefficients on Lipschitz domains $\Omega$ of $\R^d$.
  Our estimates are consistent with the boundary behavior of solutions
  on smooth domains and apply to fractional $p$-Laplacians and operators with finite horizon.
  The proof exploits the underlying variational structure
  and uses a new and flexible local translation operator. We further apply these regularity estimates to
  derive novel error estimates for finite element approximations of fractional $p$-Laplacians and
  present several simulations that reveal the boundary behavior of solutions.
\end{abstract}

\author[J.P.~Borthagaray]{Juan Pablo~Borthagaray}
\address[J.P.~Borthagaray]{Instituto de Matem\'atica y Estad\'istica ``Rafael Laguardia'', Facultad de Ingenier\'ia, Universidad de la Rep\'ublica, Montevideo, Uruguay}
\email{jpborthagaray@fing.edu.uy}
\thanks{JPB has been supported in part by Fondo Clemente Estable grant 2022-172393.}

\author[W.~Li]{Wenbo~Li}
\address[W.~Li]{Institute of Computational Mathematics and Scientific/Engineering Computing of the Chinese Academy of Sciences, Beijing 100190 China.}
\email{liwenbo@lsec.cc.ac.cn}

\author[R.H.~Nochetto]{Ricardo H.~Nochetto}
\address[R.H.~Nochetto]{Department of Mathematics and Institute for
Physical Science and Technology, University of Maryland, College
Park, MD 20742, USA}
\email{rhn@math.umd.edu}
\thanks{RHN has been supported in part by NSF grant DMS-1908267.}

\maketitle

\section{Introduction}
In recent years, fractional-order and, more generally, nonlocal operators have received a great deal of attention in applied sciences and engineering. This is mainly because such operators arise in jump processes modeling the ubiquitous phenomenon of anomalous diffusion \cite{MetzlerKlafter}. In this vein, 
the fractional Laplacian, an outstanding nonlocal operator, arises as a limit of a long-jump random walk \cite{Valdinoci}. Among other applications of nonlocal operators, we mention finance \cite{Rama:04, Levendorskii:04}, ground-water solute transport \cite{Benson:00}, and biological systems with binding, crowding, or trapping, such as electrodiffusion of ions within nerve cells \cite{Langlands:09,Langlands:11}.

For problems with a variational structure, finite element methods provide the best approximation in the energy norm, and are amenable to an analysis with low regularity conditions. In our setting, the latter is fundamental because solutions of fractional-order problems generically develop algebraic boundary layers. Solution regularity estimates in the Sobolev scale are a key ingredient to prove a priori convergence rates for the finite element discretization of such problems.  

However, most progress in that direction and most computational studies have been limited to either linear or semi-linear problems. This paper deals with fractional-order quasi-linear operators. We prove elliptic regularity estimates up to the boundary of the domain, which is only assumed to be bounded and Lipschitz. The model operator we consider is the so-called $(p,s)$-fractional Laplacian ($s \in (0,1)$, $p \in (1,\infty)$), but our theory is also valid for a broader class of operators, including operators with finite horizon.  In this regard, we remark that our regularity estimates for finite-horizon operators are even new for linear problems.
As an application of our regularity estimates, we consider direct finite element discretization of the problems under study and prove convergence rates in the energy norm.  

Let us make precise the problem setting in this paper. Let $\Omega \subset \Rd$ ($d \ge 1$) be a bounded, Lipschitz domain, $s \in (0,1)$, and $p \in (1,\infty)$. We consider energy functionals with domain the fractional-order Sobolev space $\widetilde{W}^s_p(\Omega)$, namely functions in $W^s_p(\R^d)$ that vanish in $\Omega^c:=\R^d\setminus\overline\Omega$. More precisely, for a given function $G \colon \R^d \times \R^d \times \R \to (0, \infty)$, with $(x,y,\rho) \mapsto G(x,y,\rho)$, and $f \in (\widetilde{W}^s_p(\Omega))'$, we are interested in minimizers of the energy
\begin{equation} \label{eq:def-energy}
\calF (u) := \iint_{Q_\Omega} G \left( x, y, \frac{u(x)-u(y)}{|x-y|^s} \right) \frac1{|x-y|^{d}} \, dy dx - \langle f, u \rangle.
\end{equation}
Above, $\langle \cdot, \cdot \rangle$ stands for the duality pairing between $(\widetilde{W}^s_p(\Omega))'$ and $\widetilde{W}^s_p(\Omega)$ and
\[
Q_\Omega := (\Rd \times \Rd) \setminus (\Omega^c \times \Omega^c).
\]
Specific requirements on $G$ are listed in Hypothesis \ref{hyp:G} below. The Gateaux differential of $\calF$ at $u$ is given by $\calA \colon \widetilde{W}^s_p(\Omega) \to (\widetilde{W}^s_p(\Omega))'$,
\begin{equation}\label{eq:strong}
\calA u (x) := \int_{\Rd}  \left[ G_\rho \left( x, y, \frac{u(x)-u(y)}{|x-y|^s} \right) - G_\rho \left( y, x, \frac{u(y)-u(x)}{|x-y|^s} \right) \right] \frac1{|x-y|^{d +s}} \, dy .
\end{equation}
For the moment, let us assume that $G$ satisfies the relation $G(x,y,\rho) = G(y,x,-\rho)$ for a.e. $x,y,\rho$. While this assumption allows us to write the minimization problem in a strong form in a concise fashion, it is not necessary for our theoretical results.
Under this additional condition, we have $ G_\rho(x,y,\rho) = -  G_\rho(y,x,-\rho)$ for a.e. $x,y,\rho$ and we can write 
\begin{equation}\label{eq:strong-symmetric}
\calA u (x) := 2 \int_{\Rd}  G_\rho \left( x, y, \frac{u(x)-u(y)}{|x-y|^s} \right) \frac1{|x-y|^{d +s}} \, dy .
\end{equation}

Minimizers of \eqref{eq:def-energy} are weak solutions of the homogeneous Dirichlet problem for the operator $\calA$:
\begin{equation} \label{eq:Dirichlet}
\left\lbrace \begin{array}{rl}
\calA u = f & \mbox{in } \Omega, \\
u = 0 & \mbox{in } \Omega^c .
\end{array} \right.
\end{equation}

We assume standard hypotheses on $G$ in order to apply the direct method in the calculus of variations. As a prototypical example, we consider $G(x,y,\rho) = \frac{C_{d,s,p}}{2p} |\rho|^p$ with $C_{d,s,p}$ defined below. Then, $G_\rho(x,y,\rho) = \frac{C_{d,s,p}}{2}|\rho|^{p-2} \rho$, and
\begin{equation} \label{eq:def-pLap}
\calA u (x) = \Lap u(x) := C_{d,s,p} \int_\Rd \frac{|u(x)-u(y)|^{p-2} (u(x)-u(y))}{|x-y|^{d+sp}} \, d y
\end{equation}
is the so-called {\em fractional $(p,s)$-Laplacian} (or {\em fractional $p$-Laplacian of order $s$}). We define the normalizing constant $C_{d,s,p}$ as
\begin{equation}\label{eq:Cdsp}
C_{d,s,p} = \frac{s(1-s) p \; \Gamma(\frac{ps + d}{2}) \; 2^{2s-2}}{\pi^{\frac{d-1}{2}} \Gamma(\frac{(p-2)s + 3}{2}) \Gamma(2-s)}.
\end{equation}
This choice is somewhat arbitrary, but for $p=2$ it allows us to recover the {\em integral fractional Laplacian,} which is the pseudodifferential operator with symbol $|\xi|^{2s}$. Moreover,
for every smooth function $v \in C_c^{\infty}(\Rd)$ we have the asymptotic behaviors \cite{BourBrezMiro2001another,Mazya_BBM,delTeso:21}
\begin{equation} \label{eq:asymptotic-behaviors}
\lim_{s \to 0^+}  (-\Delta)^s_p v = |v|^{p-2} v, \quad \lim_{s \to 1^-}(-\Delta)^s_p v = -\nabla \cdot (|\nabla v|^{p-2} \nabla v).
\end{equation}
We also point out that the integral in \eqref{eq:def-pLap} needs to be understood in the principal value sense if $s \ge 1 - \frac1p$.

Another way to write the operator in \eqref{eq:def-pLap} is
\begin{equation} \label{eq:equivalent-pLap}
\Lap u(x) = 2 \int_\Rd \left( \frac{|u(x)-u(y)|}{|x-y|^s} \right)^{p-2} \frac{(u(x)-u(y))}{|x-y|^{d+2s}} \, d y,
\end{equation}
which suggests that, heuristically, one can understand the fractional $(p,s)$-Laplacian as a weighted fractional Laplacian of order $s$, with a weight $ \left( \frac{|u(x)-u(y)|}{|x-y|^s} \right)^{p-2}$. This is analogous to the local case, for which the $p$-Laplacian $(-\Delta)_p u := \mbox{div}(|\nabla u|^{p-2} \nabla u)$ can be regarded as a Laplacian with weight $|\nabla u|^{p-2}$. The Dirichlet problem for the local $p$-Laplacian arises in a number of models of physical processes, including non-Newtonian fluids \cite{Atkinson:74}, turbulent flows in porous media \cite{Diaz:94},  and global climate modeling \cite{Diaz:06}. We refer to \cite{Benedikt:18} for a historical account and other applications of this operator, and to \cite{Barrett93, Chow89, GlMa75} for its numerical treatment.

The representation \eqref{eq:equivalent-pLap} also shows that the operator \eqref{eq:def-pLap} corresponds to a degenerate diffusion if $p > 2$ and to a singular one if $p<2$. We refer to \cite{Caffarelli:12} for several motivations for considering nonlinear operators like \eqref{eq:def-pLap}, to \cite{Mosconi:16} for a thorough discussion about existence and regularity results for problems driven by the fractional $(p,s)$-Laplacian, and to \cite{delTeso:23}  for a monotone finite difference scheme with consistency error estimates for $C^4$ functions and applications to the Cauchy problem  for such an operator.

Depending on whether the resulting operator $\calA$ in \eqref{eq:strong} is degenerate or singular, our regularity estimates are somewhat different from one another. To make the point clear, let us focus on the case of the $(p,s)$-Laplacian, although we emphasize that our estimates are valid for more general operators (cf. \Cref{thm:max-regularity} below). We derive shift estimates in Besov norms of the form
\begin{align}
& \| u \|_{\dot B^{s+\frac{1}{p}}_{p,\infty}(\Omega)} \lesssim \| f \|_{B^{-s+\frac{1}{p'}}_{p',1}(\Omega)}^{\frac1{p-1}}, & \mbox{ if } p \ge 2, \label{eq:p>2}\\
& \| u \|_{\dot B^{s + \frac{1}{2}}_{p,\infty}(\Omega)} \lesssim \|f\|_{W^{-s}_{p'}(\Omega)}^{\frac{2-p}{p-1}} \| f \|_{B^{-s+\frac{1}{2}}_{p',1}(\Omega)} , & \mbox{ if } p \le 2, \label{eq:p<2}
\end{align}
where $p' = \frac{p}{p-1}$; these Besov estimates extend classical ones \cite[Theorems 2 and 2']{Savare98} to the fractional setting. To check optimality, we consider the prototypical $1d$-function $v(x) = x_+^s$ which mimics the boundary behavior of solutions of \eqref{eq:Dirichlet} for the operator \eqref{eq:def-pLap}. A simple calculation using second differences shows that $v \in B^{s+1/p}_{p,\infty}(\Omega)$ for all $p\in (1,\infty)$, which revals that \eqref{eq:p>2} is optimal while \eqref{eq:p<2} is suboptimal. Moreover, by a simple embedding argument, \eqref{eq:p>2} and \eqref{eq:p<2} give rise to Sobolev regularity estimates. Estimate \eqref{eq:p>2} for $p=2$ turns out to be consistent with well-known optimal regularity for solutions to the Dirichlet problem for the integral fractional Laplacian on smooth domains, cf. \cite{AbelsGrubb, Grubb, VishikEskin}. Importantly, our estimates are valid for Lipschitz domains and in that sense generalize the ones derived in \cite{Akagi:18,BoNo21} to a quasi-linear setting. Additionally, our estimates are valid under general conditions on the function $G$. In this vein, we point out to \cite{FernandezBonder:22} where, for a class of nonlinear operators related to the ones in this work, analysis is performed in fractional-order Orlicz-Sobolev spaces and H\"older regularity estimates are derived for Dirichlet problems on bounded $C^{1,1}$ domains.

The paper is organized as follows. \Cref{sec:notation} collects preliminary material about function spaces and Lipschitz domains, introduces a flexible local translation operator that plays an instrumental role in our derivation of regularity estimates, specifies the assumptions we require on the energy, and discusses the use of localized translations in the proof of regularity of energy minimizers. \Cref{sec:regularity} contains the core of the paper, and studies the regularity of solutions through the derivation of suitable energy bounds. It also discusses the extension of the technique to operators with finite horizon and {\em truncated Laplacians} in the linear setting. \Cref{sec:FE} proposes and analyzes a finite element discretization of problems of the form \eqref{eq:Dirichlet}, and exploits \eqref{eq:p>2} and \eqref{eq:p<2} to prove error bounds for all $p\in (1,\infty)$. Finally, \Cref{sec:numerical} exhibits some numerical experiments that explore the accuracy of this approach and the boundary behavior of solutions to the $(p,s)$-Laplacian \eqref{eq:def-pLap} and linear truncated Laplacians.

\section{Notation and assumptions} \label{sec:notation}

This section establishes the notation and collects some preliminary results. 
We provide some discussion on function spaces and Lipschitz domains. We analyze function space characterizations by means of translation operators, discuss the relation between these translations and the regularity of minimizers of \eqref{eq:def-energy}, and introduce a suitable localized translation operator to derive regularity estimates. Finally, we make explicit assumptions on the energy, discuss some of their consequences, and comment on how they apply to the model operator \eqref{eq:def-pLap}.

\subsection{Sobolev and Besov spaces}
Here, we briefly review some important facts about Sobolev and Besov spaces. We follow the notation from \cite{BoNo21} and refer to that work for further details.

Given $\sigma \in (0,1)$ and $p \in [1,\infty)$, we consider the zero-extension Sobolev space 
\[
\widetilde{W}^\sigma_p(\Omega) := \big\{ v \in W^\sigma_p(\mathbb{R}^d) \colon \supp v \subset \overline{\Omega} \big\};
\]
this is a Banach space furnished with the norm
\begin{equation*} \label{eq:Gagliardo-seminorm}
\| v \|_{\widetilde{W}^\sigma_p(\Omega)} := |v|_{W^\sigma_p(\Rd)} =  \left(\frac{C_{d,s,p}}{2} \iint_{\Rd \times \Rd} \frac{|v(x)-v(y)|^p}{|x-y|^{d+\sigma p}} \, dx \, dy \right)^{1/p}.
\end{equation*}
Because functions in $\widetilde{W}^\sigma_p(\Omega)$ vanish in $\Omega^c$, the integrand above vanishes on $\Omega^c \times \Omega^c$ and one can effectively compute the integral over $Q_\Omega = (\Rd \times \Rd) \setminus (\Omega^c \times \Omega^c)$. 

We define Besov spaces by real interpolation. Given a pair of compatible Banach spaces $(X_0,X_1)$, $u \in X_0 + X_1$, $t>0$, and $p \in [1,\infty)$, we consider the $K$-functional
\begin{equation}\label{eq:K-functional}
K(t, u) := \inf \left\{ \| u_0 \|_{X_0} + t \| u_1 \|_{X_1} \colon u = u_0 + u_1, \ u_0 \in X_0, \ u_1 \in X_1 \right\}. 
\end{equation}
For $\theta \in (0,1)$ and $q \in [1, \infty]$, we define the interpolation spaces
\[
\big[X_0, X_1\big]_{\theta, q} := \{ u \in X_0 + X_1 \colon \| u \|_{(X_0, X_1)_{\theta, q}} < \infty \},
\]
where
\begin{equation}\label{eq:Besov-norm}
\| u \|_{[X_0, X_1]_{\theta, q}} :=
\begin{cases}
\Big[ q \theta (1-\theta) \int_0^\infty t^{-(1+\theta q)} |K(t,u)|^q \, dt \Big]^{1/q} & \mbox{if } 1 \le q < \infty, \\
\sup_{t > 0} \ t^{-\theta} |K(t,u)|  & \mbox{if } q = \infty.
\end{cases}
\end{equation}
The normalization factor $q \theta (1-\theta)$ in \eqref{eq:Besov-norm} guarantees the correct scalings in the limits $\theta \to 0$, $\theta \to 1$ and $q \to \infty$ for norm continuity; see \cite[Appendix B]{mclean2000strongly} for a detailed proof in the case $p=2$. Because we are interested in spaces with differentiability order between zero and two, we let $X_0:=L^p(\Omega)$, $X_1:=W^2_p(\Omega)$, $\sigma\in(0,2)$ and $q\in [1,\infty]$ to define the Besov spaces
\[
B^\sigma_{p,q} (\Omega) := \big[L^p(\Omega), W^2_p(\Omega)\big]_{\sigma/2,q},\quad
\dot{B}^\sigma_{p,q} (\Omega) := \{v\in B^\sigma_{p,q} (\Omega): \supp v \subset\overline{\Omega}\}.
\]
By reiteration, we have the following result regarding interpolation of Besov spaces (cf. \cite[Theorem 6.4.5]{BerghLofstrom}): given $\sigma_0 \ne \sigma_1$, $1 \le p, q_0, q_1, r \le \infty$ and $0 < \theta < 1$,
\begin{equation} \label{eq:interpolation_Besov}
\left( B^{\sigma_0}_{p,q_0}(\Omega), B^{\sigma_1}_{p,q_1}(\Omega) \right)_{\theta, r} = B^{\sigma}_{p,r}(\Omega) , \quad \mbox{where } \sigma = (1-\theta) \sigma_0 + \theta \sigma_1 .
\end{equation}
Importantly, we have $B^\sigma_{p,p} (\Omega) = W^\sigma_p(\Omega)$ for all $p \in [1,\infty)$, $\sigma \in (0,2) \setminus \{1\}$. In the case $\sigma = 1$, we only have the equality $B^1_{2,2} (\Omega) = H^\sigma(\Omega)$, while $B^1_{p,p} (\Omega) \subset W^1_p(\Omega)$ if $p<2$ and $B^1_{p,p} (\Omega) \supset W^1_p(\Omega)$ if $p >2$, cf. \cite[\S7.67]{AdamsFournier:2003}.
Moreover, we have the following inclusions between Besov spaces on bounded Lipschitz domains \cite[\S3.2.4, \S3.3.1]{Triebel10}: 
\[ \begin{array}{llll}
B^\sigma_{p,q_0}(\Omega) \subset B^\sigma_{p,q_1}(\Omega),  & \mbox{ if } \sigma > 0, &  1 \le p \le \infty, & 1 \le q_0 \le q_1 \le \infty; \\
B^{\sigma_1}_{p,q_1}(\Omega) \subset B^{\sigma_0}_{p,q_0}(\Omega) & \mbox{ if } 0 < \sigma_0 < \sigma_1, &   1 \le p \le \infty , & 1 \le q_0, q_1 \le \infty.
\end{array} \]
We are interested in making precise the statement about inclusion of a higher-order Besov space with integrability index $p \in [1,\infty)$ and second parameter $q=\infty$ into a lower-order Sobolev space with the same integrability index. Concretely, the next lemma shows the scaling of the continuity constant.

\begin{lemma}[embedding] \label{lemma:Besov-Sobolev-emb}
Let $\Omega \subset \Rd$ be a bounded Lipschitz domain, $p \in [1,\infty)$, $\sigma\in (0,2) \setminus \{1\}$, and $\eps \in (0,2-\sigma)$. Then, $B^{\sigma+\eps}_{p,\infty}(\Omega) \subset W^{\sigma}_p(\Omega)$ with 
\begin{equation}\label{eq:Besov-Sobolev-emb}
  \|v\|_{W^{\sigma}_p(\Omega)} \lesssim \left( \frac{\sigma (2-\sigma)}{\eps} \right)^{\frac1p} \|v\|_{B^{\sigma+\eps}_{p,\infty}(\Omega)}
  \quad\forall \, v\in B^{\sigma+\eps}_{p,\infty}(\Omega).
\end{equation}
\end{lemma}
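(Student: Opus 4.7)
The plan is to express both sides of \eqref{eq:Besov-Sobolev-emb} via the Peetre $K$-functional of the interpolation couple $(L^p(\Omega), W^2_p(\Omega))$, using the identification $W^\sigma_p(\Omega) = B^\sigma_{p,p}(\Omega)$ (valid for $\sigma \in (0,2)\setminus\{1\}$). Setting $M := \|v\|_{B^{\sigma+\eps}_{p,\infty}(\Omega)}$, the definition \eqref{eq:Besov-norm} in the case $q=\infty$ supplies the pointwise bound $K(t,v) \le M\,t^{(\sigma+\eps)/2}$ for every $t>0$. I would first complement this with an $\eps$-independent bound $K(t,v)\le M$ valid for all $t$: evaluating the previous inequality at $t=1$, extracting a near-optimal decomposition $v = u_0+u_1$ in the $K$-functional at $t=1$, and using $\|u_1\|_{L^p(\Omega)}\le \|u_1\|_{W^2_p(\Omega)}$ from the embedding $W^2_p(\Omega)\hookrightarrow L^p(\Omega)$ one deduces $\|v\|_{L^p(\Omega)}\le M$, which then dominates $K(t,v)$ (take $u_1=0$ in the infimum) uniformly in $t$.

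Next, I would plug these two bounds into the representation
\[
\|v\|_{W^\sigma_p(\Omega)}^p = \frac{p\sigma(2-\sigma)}{4}\int_0^\infty t^{-1-p\sigma/2}\,K(t,v)^p\,dt
\]
coming from \eqref{eq:Besov-norm} with $\theta=\sigma/2$ and $q=p$, splitting the integral at $t=1$. On $(0,1)$ I would use $K(t,v)^p \le M^p\,t^{p(\sigma+\eps)/2}$, while on $[1,\infty)$ I would use $K(t,v)^p\le M^p$. The two resulting elementary integrals evaluate to $2/(p\eps)$ and $2/(p\sigma)$ respectively, so after multiplication by the normalization factor $p\sigma(2-\sigma)/4$ I obtain
\[
\|v\|_{W^\sigma_p(\Omega)}^p \;\le\; \frac{\sigma(2-\sigma)}{2\eps}\,M^p \;+\; \frac{2-\sigma}{2}\,M^p.
\]
Taking $p$-th roots and absorbing the bounded remainder into the dominant singular term (the regime $\eps\le 2-\sigma$ makes $\tfrac{2-\sigma}{2}\lesssim \tfrac{\sigma(2-\sigma)}{\eps}$ up to a constant depending only on $\sigma$) produces \eqref{eq:Besov-Sobolev-emb}.

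The main technical point is a bookkeeping one: the blow-up factor $1/\eps^{1/p}$ comes exclusively from the small-$t$ integral $\int_0^1 t^{-1+p\eps/2}\,dt=2/(p\eps)$, while the precise $\sigma(2-\sigma)$ prefactor in the statement is the direct imprint of the Besov normalization $p\theta(1-\theta)$ in \eqref{eq:Besov-norm}, which is present on the $W^\sigma_p=B^\sigma_{p,p}$ side but absent on the $B^{\sigma+\eps}_{p,\infty}$ side (where $q=\infty$). Carrying this normalization carefully through the split, rather than hiding it inside a generic constant, is what makes the estimate \eqref{eq:Besov-Sobolev-emb} non-degenerate in the endpoint limits $\sigma\to 0^+$ and $\sigma\to 2^-$.
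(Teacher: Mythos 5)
Your argument is correct and follows the same overall blueprint as the paper's proof: express both norms via the $K$-functional for the couple $(L^p(\Omega), W^2_p(\Omega))$ and split the defining integral for $\|v\|_{W^\sigma_p(\Omega)}^p$. The one genuine divergence is the treatment of the tail integral. The paper bounds $K(t,v)\le\|v\|_{L^p(\Omega)}\le\|v\|_{W^\sigma_p(\Omega)}$ on $(N,\infty)$ and then performs a \emph{kickback}: after choosing $N$ so that the coefficient of $\|v\|_{W^\sigma_p(\Omega)}^p$ on the right-hand side is strictly below $1$, it subtracts that term from both sides. Your variant instead shows directly that $\|v\|_{L^p(\Omega)}\le M=\|v\|_{B^{\sigma+\eps}_{p,\infty}(\Omega)}$ (by picking a near-optimal decomposition at $t=1$ and using that the $L^p$-norm is dominated by the $W^2_p$-norm), so the tail integral is controlled by $M^p$ outright. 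This buys two things: it avoids the kickback step altogether (so one never needs the a priori finiteness of $\|v\|_{W^\sigma_p(\Omega)}$, which the absorption argument tacitly requires), and the split point can simply be fixed at $t=1$ rather than carried as a free parameter $N$. In exchange, your final estimate reads $\|v\|_{W^\sigma_p(\Omega)}^p\le\big(\tfrac{\sigma(2-\sigma)}{2\eps}+\tfrac{2-\sigma}{2}\big)M^p$ rather than the paper's (after kickback with $N=1$) $\tfrac{2-\sigma}{\eps}M^p$; both reduce to the stated bound only modulo a factor that may depend on $\sigma$, a limitation you correctly flag and which is also implicit in the paper's proof. This is harmless for the applications in the paper, where $\sigma$ is bounded away from $0$ and the point is the $\eps^{-1/p}$ blow-up.

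One small note: the inequality $\|u_1\|_{L^p(\Omega)}\le\|u_1\|_{W^2_p(\Omega)}$ with constant exactly $1$ relies on the specific choice of $W^2_p$-norm (one in which the $L^p$-piece appears additively); with a different but equivalent norm it would hold only up to a harmless dimensional constant. Worth a remark, but not a gap.
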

\begin{proof}
We exploit the characterization of Besov and fractional-order Sobolev spaces as interpolation spaces between integer-order Sobolev spaces. More precisely, if the $K$-functional corresponds to interpolation between the spaces $X_0 = L^p(\Omega)$ and $X_1 = W^2_p(\Omega)$, we recall the norm definitions
\[
\| v \|_{B^{\sigma+\eps}_{p,\infty}(\Omega)} = \sup_{t > 0} \,\Big( t^{-\frac{\sigma+\eps}2} |K(t,v)|\Big),
\]
and
\[
\| v \|_{W^{\sigma}_p(\Omega)}^p =  \frac{p \sigma (2-\sigma)}4 \int_0^\infty  t^{-1-\frac{\sigma p}2} |K(t,v)|^p dt
\]
for $\sigma\in(0,2)\setminus\{1\}$, according to the remark following \eqref{eq:interpolation_Besov}.
We split the integral above as the sum of the integrals between $0$ and $N$ and between $N$ and $\infty$, with $N>0$ to be chosen. A straightforward calculation gives
\[
\int_0^N  t^{-1-\frac{\sigma p}2} |K(t,v)|^p dt \le \sup_{t > 0} \, t^{-\frac{(\sigma+\eps)p}2} |K(t,v)|^p \,\int_0^N t^{-1+\frac{\eps p}2} dt = \frac{2 N^{\frac{\eps p}2}}{\eps p} \| v \|_{B^{\sigma+\eps}_{p,\infty}(\Omega)}^p.
\]
Additionally, for any $v\in B^{\sigma+\eps}_{p,\infty}(\Omega)$ and $t \ge 0$, we choose the trivial decomposition $v = v + 0$ in \eqref{eq:K-functional} to obtain
\[
|K(t,v)| \le \| v \|_{L^p(\Omega)} \le \| v \|_{W^{\sigma}_p(\Omega)}.
\]
This gives rise to
\[
\int_N^\infty  t^{-1-\frac{\sigma p}2} |K(t,v)|^p dt \le \| v \|_{W^{\sigma}_p(\Omega)}^p \int_N^\infty  t^{-1-\frac{\sigma p}2} dt = \frac{2 N^{-\frac{\sigma p}2}}{p \sigma} \| v \|_{W^{\sigma}_p(\Omega)}^p ,
\]
and thus
\[
\| v \|_{W^{\sigma}_p(\Omega)}^p \le \frac{\sigma (2-\sigma) N^{\frac{\eps p}2}}{2 \eps}
 \| v \|_{B^{\sigma+\eps}_{p,\infty}(\Omega)}^p + 
\frac{(2-\sigma)N^{-\frac{\sigma p}2}}{2} \| v \|_{W^{\sigma}_p(\Omega)}^p.
\]
It now suffices to fix $N$ sufficiently large so that $\frac{(2-\sigma)N^{-\frac{\sigma p}2}}{2} < 1$ and kick back the last term in the right hand side above to arrive to the desired estimate \eqref{eq:Besov-Sobolev-emb}.
\end{proof}

For $\sigma \in (0,1)$ and $p,q \in [1, \infty]$, we define
\begin{equation*} \label{eq:def-negative-Besov}
B^{-\sigma}_{p,q} (\Omega) := \big(L^p(\Omega), W^{-1}_p(\Omega) \big)_{\sigma,q},
\end{equation*}
and point out that, if $p,q \in (1, \infty]$ and $p',q'\in[1,\infty)$ are the conjugate exponents,
we have the duality \cite{BoNo21}
\[
\dot{B}^\sigma_{p,q}(\Omega) = (B^{-\sigma}_{p',q'} (\Omega))'.
\]

It is common practice to furnish Besov spaces with equivalent norms based on $L^p$-norms of difference quotients,  instead of the interpolation norm. Given $\rho>0$,
\[
\Omega_\rho := \{x\in\Omega: \textrm{dist} (x,\partial\Omega)<\rho\},
\quad
\Omega^\rho := \{x\in\R^d: \textrm{dist} (x,\partial\Omega)>\rho\},
\]
and a set of admissible directions $D \subset \Rd$, typically a ball, we denote
\[  
|v|_{B^\sigma_{p,q}(\Omega; D)} := \Big(q\sigma(2-\sigma)
\int_D \frac{\| v_h - 2 v + v_{-h} \|_{L^p(\Omega_{|h|})}^q}{|h|^{d+q\sigma}}dh \Big)^{1/q}
\]
 for $p,q\in[1,\infty)$ while for $q=\infty$ we let
\[
|v|_{B^\sigma_{p,\infty}(\Omega; D)} := \sup_{h \in D} 
\frac{\| v_h - 2 v + v_{-h} \|_{L^p(\Omega_{|h|})}}{|h|^{\sigma}},
\]
where $v_h(x) := v(x+h)$ is the translation with vector $h \in \Rd$. It is well-known that, if $D$ is a ball, then the norm $\| \cdot \|_{L^p(\Omega)} + |\cdot|_{B^\sigma_{p,q}(\Omega; D)}$ is equivalent to the Besov norm $\| \cdot \|_{B^\sigma_{p,q}(\Omega)}$ defined through interpolation \cite[Theorem 7.47]{AdamsFournier:2003}. Moreover, \cite[Proposition 2.2]{BoNo21} shows that balls $D$ can be replaced by suitable convex cones in the definition of Besov seminorms for $q=\infty$.
More precisely, let us assume $D \subset \Rd$ is bounded and star-shaped with respect to the origin. We say that 
$D$ {\em generates} $\Rd$ if there exists $\rho_0(D) > 0$ such that for every $\rho \le \rho_0(D)$ and
every $h\in D_\rho(0)$, the ball of radius $\rho$ and center $0$, there exists $\{h_j\}_{j=1}^d \subset D\cup(-D)$ satisfying
\[
h = \sum_{j=1}^d h_j,
\quad
\sum_{j=1}^d |h_j| \le c |h|
\]
with a constant $c>0$ only dependent on $D$.
The following equivalence is proved in \cite[Proposition 2.2]{BoNo21}.

\begin{proposition}[Besov seminorms using cones]\label{P:seminorms}
Let $D$ be a convex cone generating $\Rd$ and let $B \subset \Rd$ be a ball. If $\sigma\in(0,2)$ and $p\in[1,\infty)$, then for every function $v \colon \Rd \to \R$ we have $|v|_{B^\sigma_{p,\infty}(\Omega; D)} \simeq |v|_{B^\sigma_{p,\infty}(\Omega;B)}$.
\end{proposition}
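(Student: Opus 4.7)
The plan is to prove the two inequalities separately, exploiting the star-shapedness of $D$ for one direction and its generating property for the other. I write $M_D:=|v|_{B^\sigma_{p,\infty}(\Omega;D)}$ and $M_B:=|v|_{B^\sigma_{p,\infty}(\Omega;B)}$.

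For $M_D\lesssim M_B$ I would proceed by rescaling. Given $h\in D$, pick the smallest $n\in\N$ with $|h|/n\le r$, where $r$ is the radius of $B$; by star-shapedness $h':=h/n\in D$, and automatically $h'\in B$. The Chebyshev factorization $\sin^2(n\alpha)/\sin^2\alpha=U_{n-1}(\cos\alpha)^2$ on the Fourier side corresponds to the pointwise identity
\[
\Delta_{nh'}^{2}v(x)=\sum_{k=-(n-1)}^{n-1}(n-|k|)\,\Delta_{h'}^{2}v(x+kh'),
\]
and taking $L^p(\Omega_{|h|})$ norms, using $\sum_k(n-|k|)=n^2$ and that each shifted copy of $\Delta_{h'}^{2}v$ has its $L^p$ norm controlled on a slightly enlarged boundary strip, yields $\|\Delta_h^{2}v\|_{L^p(\Omega_{|h|})}\le n^{2}\|\Delta_{h'}^{2}v\|_{L^p(\widetilde\Omega_{|h'|})}$. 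Dividing by $|h|^\sigma=n^\sigma|h'|^\sigma$ pulls out a prefactor $n^{2-\sigma}$, bounded by a constant depending only on $\mathrm{diam}(D)/r$ since $n\le|h|/r+1$.

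The bound $M_B\lesssim M_D$ is the hard direction. Given $h\in B$ with $|h|\le\rho_0(D)$, the generating property writes $h=\sum_{j=1}^{d}h_j$ with $h_j\in D\cup(-D)$ and $\sum_j|h_j|\le c|h|$; because $\Delta^2 v$ is even in the increment, I may assume $h_j\in D$. A pointwise check (matching the eight translates of $v$ on each side) gives the key identity
\[
\Delta_{h_1+h_2}^{2}v = 2\Delta_{h_1}^{2}v + 2\Delta_{h_2}^{2}v - \Delta_{h_1-h_2}^{2}v + \Delta_{h_1}^{2}\Delta_{h_2}^{2}v,
\]
which I would apply iteratively to fold the $h_j$'s in pairs. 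The first two terms are immediately controlled by $|h_j|^\sigma M_D\le(c|h|)^\sigma M_D$, and the fourth-order mixed term admits the crude bound $\|\Delta_{h_1}^{2}\Delta_{h_2}^{2}v\|_{L^p}\le 4\|\Delta_{h_2}^{2}v\|_{L^p}\lesssim|h|^\sigma M_D$.

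The main obstacle is the cross-term $\Delta_{h_1-h_2}^{2}v$: the vector $h_1-h_2$ need not belong to $D\cup(-D)$, so $M_D$ does not directly control it. To handle it I would re-apply the generating property to $h_1-h_2$, whose norm is at most $2c|h|$, and recurse, keeping book of the mixed residuals that appear at each level. Because all fresh increments remain of size $\lesssim|h|$ and the coefficients in the identity are bounded, the recursion closes with a constant depending only on $d$, $\sigma$, and the geometric constant $c$ of $D$; any leftover contribution of the trivial form $\|\Delta_k^{2}v\|_{L^p}\le 4\|v\|_{L^p}$ is absorbed into the $L^p$ part of the full Besov norm, whose equivalence for $D$ and for $B$ is the standard statement recalled just before the proposition. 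The regime $|h|>\rho_0(D)$ is handled in the same way, since there $|h|^\sigma\ge\rho_0(D)^\sigma$ and the ratio $\|\Delta_h^{2}v\|_{L^p}/|h|^\sigma$ is bounded directly by $\|v\|_{L^p}$.
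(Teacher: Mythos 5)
The paper does not reprove Proposition \ref{P:seminorms}; it cites it from \cite[Proposition 2.2]{BoNo21}. So I will assess your argument on its own merits.

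Your direction $M_D\lesssim M_B$ is correct: the iterated second-difference identity $\Delta^2_{nh'}v(x)=\sum_{|k|<n}(n-|k|)\Delta^2_{h'}v(x+kh')$ is valid, the domains work out (if $x\in\Omega_{|h|}$ then $x+kh'\in\Omega_{|h'|}$ for all $|k|\le n-1$, since $(n-1)|h'|<|h|$), and since $D$ is bounded the factor $n^{2-\sigma}$ is uniformly bounded. This half is fine.

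The direction $M_B\lesssim M_D$ has a genuine gap, and it is precisely where you ignore the hypothesis that $D$ is a \emph{convex} cone. Your identity $\Delta^2_{h_1+h_2}=2\Delta^2_{h_1}+2\Delta^2_{h_2}-\Delta^2_{h_1-h_2}+\Delta^2_{h_1}\Delta^2_{h_2}$ is correct, but the ``recurse on the cross term'' strategy does not close: each application of the identity produces a new cross term of the same size $\lesssim|h|$ that again may lie outside $D\cup(-D)$, so the number of residuals grows geometrically with no decay in their contribution, and the claim that ``the recursion closes because the coefficients are bounded'' is not an argument. Moreover, the fallback $\|\Delta^2_k v\|_{L^p}\le 4\|v\|_{L^p}$ absorbed ``into the $L^p$ part of the full Besov norm'' is not available here: the proposition asserts equivalence of \emph{seminorms}, so you may not inject $\|v\|_{L^p}$ anywhere.

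The fix is to exploit convexity so that no recursion is needed. After writing $h=\sum_j h_j$ with $h_j\in D\cup(-D)$, separate by sign: $h^+:=\sum_{h_j\in D}h_j$ and $h^-:=-\sum_{h_j\in-D}h_j$. Both $h^+,h^-$ lie in the conical hull of $D$ (with lengths $\le c|h|$), and $h=h^+-h^-$. A single application of your identity with $a=h^+$, $b=-h^-$ gives
\[
\Delta^2_h v = 2\Delta^2_{h^+}v + 2\Delta^2_{h^-}v - \Delta^2_{h^+ + h^-}v + \Delta^2_{h^+}\Delta^2_{h^-}v,
\]
and now, crucially, $h^+ + h^-$ also lies in the conical hull of $D$ \emph{because $D$ is convex}. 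Thus all three second-difference increments on the right lie in the cone (possibly beyond $D$ itself, but then you apply the rescaling identity from your first direction to a smaller multiple), and the mixed term is handled as you proposed. The argument then terminates in one step. Since your write-up never invokes the convexity of $D$, which is the key structural hypothesis, the second direction as written does not constitute a proof.
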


We will decompose $\Omega$ into overlapping balls and apply Proposition \ref{P:seminorms} to subdomains $\omega$ made of intersections of such balls with $\Omega$. The convex cone $D$ will depend on $\omega$ and will be dictated by the Lipschitz property of $\Omega$. However, we will omit writing $D$ in $B^\sigma_{p,q}(\omega)$ for simplicity of notation but without compromising clarity.

We can estimate higher-order Besov norms, possibly of order higher than one, in terms of difference quotients of Besov norms of order less than one. We express this instrumental reiteration property as follows and refer to \cite[Proposition 2.1]{BoNo21}.

\begin{proposition}[reiteration of Besov seminorms] \label{prop:bound-Besov}
Let $\omega \subset \Rd$ be a bounded Lipschitz domain, $s \in (0,1)$, $p,q \in [1, \infty]$, $\sigma \in (0, 1]$ and let $D$ be a set generating $\Rd$ and star-shaped with respect to the origin. Then, 
\[ \begin{split}
& |v|_{B^{s+\sigma}_{p,q}(\omega)} \lesssim \left( \int_{D} \frac{|v - v_h|^q_{W^s_p(\omega)}}{|h|^{d + q\sigma}}  \, dh \right)^{1/q}, \quad q \in [1,\infty), \\ 
& |v|_{B^{s+\sigma}_{p,\infty}(\omega)} \lesssim  \sup_{h \in D} \frac{1}{|h|^{\sigma}} |v - v_h|_{W^s_p(\omega)}. 
\end{split}\]
\end{proposition}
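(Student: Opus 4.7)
The plan is to use the equivalent characterization of the Besov seminorm via second-order differences introduced immediately above the statement, rewrite each second difference as a first-order difference of the auxiliary function $\varphi_h := v - v_h$, and then bound the resulting $L^p$-norm using the Gagliardo seminorm of $\varphi_h$. The key algebraic identity is
\[
v(x+h) - 2v(x) + v(x-h) = \varphi_h(x-h) - \varphi_h(x),
\]
so that $\|v_h - 2v + v_{-h}\|_{L^p(\omega_{|h|})} = \|\varphi_h - (\varphi_h)_{-h}\|_{L^p(\omega_{|h|})}$. Substituting this identity into the difference-quotient form of $|v|_{B^{s+\sigma}_{p,q}(\omega;D)}$ (the integral over $D$ for $q < \infty$, or the supremum over $h \in D$ for $q = \infty$) reduces the proposition to the pointwise-in-$h$ estimate
\[
\|\varphi_h - (\varphi_h)_{-h}\|_{L^p(\omega_{|h|})} \lesssim |h|^s\, |\varphi_h|_{W^s_p(\omega)},
\]
since the extra $|h|^s$ exactly converts the weight $|h|^{-d-q(s+\sigma)}$ of the Besov integrand into the weight $|h|^{-d-q\sigma}$ on the right-hand side.

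To derive this translation estimate I would pick a radius $\rho \simeq |h|$ (say $\rho = |h|/2$) and, for each $k \in B(0,\rho)$, use the telescoping decomposition
\[
\varphi_h(x-h) - \varphi_h(x) = \bigl[\varphi_h(x-h) - \varphi_h(x-h+k)\bigr] + \bigl[\varphi_h(x-h+k) - \varphi_h(x)\bigr].
\]
Raising to the $p$-th power and applying the triangle inequality, then averaging over $k \in B(0,\rho)$ and integrating over $x$, the two resulting double integrals are pieces of the Gagliardo integral defining $|\varphi_h|_{W^s_p(\omega)}^p$, restricted to the regions $|x-y| \lesssim \rho$ and $|x-y| \lesssim 2|h|$, respectively. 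Each is therefore bounded by $C|h|^{d+sp}\,|\varphi_h|_{W^s_p(\omega)}^p$, and dividing through by $|B(0,\rho)|\simeq|h|^d$ produces the claimed inequality. The star-shaped and generating hypotheses on $D$ enter here to guarantee that the intermediate points $x-h+k$ remain in $\omega$ for $x$ in the appropriate boundary layer and that the constants obtained are uniform in the direction $h$.

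The main obstacle is precisely the translation estimate above. A naïve single-direction bound of the form $\|f - f_{-h}\|_{L^p} \lesssim |h|^s |f|_{W^s_p}$ fails for generic $f \in W^s_p$, since the Gagliardo seminorm is an \emph{averaged} modulus of continuity rather than a directional one. The triangle-plus-averaging argument sidesteps this difficulty by doubling the single translation into two translations whose combined contribution, once averaged over the auxiliary ball $B(0,\rho)$, is exactly what the Gagliardo seminorm restricted to scales comparable to $|h|$ is able to control.
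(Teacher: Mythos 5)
Your core mechanism is the right one: rewriting the second difference of $v$ as a first difference of $\varphi_h = v - v_h$, recognizing that a naive single-direction estimate $\|f - f_{-h}\|_{L^p} \lesssim |h|^s |f|_{W^s_p}$ fails, and then doubling the translation and averaging over an auxiliary ball so that each piece is controlled by the Gagliardo integral at scale $\simeq |h|$. This is the standard argument, and the bookkeeping of the weight $|h|^{-d-q(s+\sigma)}$ turning into $|h|^{-d-q\sigma}$ is exactly as you describe.

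There is, however, a genuine gap in the specific decomposition you chose. With $x \in \omega_{|h|}$ (that is, $\textrm{dist}(x,\partial\omega) > |h|$), the shifted point $x - h$ is only guaranteed a positive but possibly tiny distance from $\partial\omega$, so for $|k| \le \rho = |h|/2$ the intermediate point $x - h + k$ may fall outside $\omega$, and then the pair $(x-h,\, x-h+k)$ does not feed into $|\varphi_h|_{W^s_p(\omega)}^p$, which integrates over $\omega\times\omega$ only. The fix is to put the auxiliary point near the midpoint of $[x-h, x]$, e.g.\ use $x - h/2 + k$ with $|k| \le |h|/4$: then $\textrm{dist}(x - h/2 + k, \partial\omega) > |h| - |h|/2 - |h|/4 = |h|/4 > 0$, while the two separations $|h/2 + k|$ and $|h/2 - k|$ are still $\le 3|h|/4$, so after Jensen and the restriction of the Gagliardo integral to $|y-z| \lesssim |h|$ you obtain the asserted bound $\lesssim |h|^{sp}|\varphi_h|_{W^s_p(\omega)}^p$ cleanly. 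Separately, your attribution of the ``star-shaped and generating'' hypothesis on $D$ is off: it plays no role in keeping the auxiliary points inside $\omega$ (that is governed entirely by the choice of $\omega_{|h|}$ and $\rho$), but rather justifies replacing the intrinsic seminorm $|v|_{B^{s+\sigma}_{p,q}(\omega)}$ on the left by the directional second-difference seminorm $|v|_{B^{s+\sigma}_{p,q}(\omega;D)}$; the paper records that equivalence only for balls (general $q$) and for cones with $q=\infty$ (Proposition~\ref{P:seminorms}), so for general star-shaped generating $D$ and $q<\infty$ your reduction implicitly relies on an unproved (though standard) extension. The paper itself cites this proposition from \cite{BoNo21}, so it gives no proof to compare against, but the reduction-plus-averaging scheme you use is the expected one.
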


\subsection{Lipschitz domains}
We next briefly state a few well-known but relevant results regarding Lipschitz domains in $\Rd$.

\begin{definition}[admissible outward vectors] \label{def:outward_vectors}
For every $x_0 \in \Rd$ and $\rho \in (0,1]$, we define the set of admissible outward vectors
\[
\calO_\rho(x_0) = \{ h \in \Rd \colon |h|\le \rho, (B_{2\rho}(x_0)\setminus\Omega) + th \subset \Omega^c, \ \forall t \in [0,1] \}.
\]
\end{definition}

An important fact about bounded Lipschitz domains is that they satisfy a uniform cone property. This can be stated in the following fashion \cite[\S 1.2.2]{Grisvard}.

\begin{proposition}[uniform cone property] \label{prop:cone-property}
If $\Omega$ is a bounded Lipschitz domain, then there exist $\rho \in (0,1]$, $\theta \in (0, \pi]$ and a map $\vn \colon \Rd \to S^{d-1}$ such that, for every $x \in \Rd$,
\[
\calC_\rho (\vn(x), \theta) := \{ h \in \Rd \colon  |h| \le \rho, \ h \cdot \vn \ge |h| \cos \theta \} 
\subset \calO_\rho(x) .
\]
\end{proposition}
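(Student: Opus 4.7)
The plan is to reduce the global uniform cone condition to the local graph representation of a Lipschitz domain. The basic mechanism is simple: in a coordinate frame where $\partial\Omega$ is locally the graph of a Lipschitz function $\gamma$ of constant $L$, translating a point $y \in \Omega^c$ by any vector whose ``downward'' component dominates the horizontal component by a factor of at least $L$ keeps the translated point below the graph. A compactness argument then patches these local cones into a globally-defined direction field $\vn$ with a common radius $\rho$ and aperture $\theta$.

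More explicitly, for each $z \in \partial\Omega$ I fix a radius $r_z > 0$ and an orthogonal transformation $R_z$ so that in the coordinates $R_z(\cdot - z)$,
\[
\Omega \cap B_{r_z}(z) = \{ (y',y_d) \in B_{r_z}(z) : y_d > \gamma_z(y') \},
\]
with $\gamma_z$ Lipschitz of constant $L_z$. By compactness of $\partial\Omega$ I extract a finite subcover $\{B_{r_{z_i}/2}(z_i)\}_{i=1}^N$ and set $L := \max_i L_{z_i}$. I then fix $\rho \in (0,1]$ small enough that every ball $B_{3\rho}(x)$ meeting $\partial\Omega$ lies inside some chart $B_{r_{z_i}}(z_i)$ (a Lebesgue-number argument on the finite cover), and choose $\theta \in (0,\pi/2)$ so that $\tan\theta < 1/L$. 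The map $\vn$ is defined by $\vn(x) := -R_{z_i}^{-1} e_d$ whenever $B_{3\rho}(x) \cap \partial\Omega \neq \emptyset$ with $B_{3\rho}(x)\subset B_{r_{z_i}}(z_i)$, and by any fixed unit vector otherwise.

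For the verification, the case $B_{3\rho}(x) \cap \partial\Omega = \emptyset$ is immediate: either $B_{2\rho}(x)\setminus \Omega = \emptyset$, or else $B_{2\rho}(x)+th \subset B_{3\rho}(x) \subset \Omega^c$ for every $|h|\le\rho$ and $t\in[0,1]$. In the remaining case I work in local coordinates, writing $h=(h',h_d)$; the defining inequality $h\cdot\vn(x) \ge |h|\cos\theta$ becomes $-h_d \ge |h|\cos\theta$, which forces $|h_d| \ge L|h'|$ by the choice of $\theta$. For any $y \in B_{2\rho}(x)\setminus\Omega$ and $t\in[0,1]$, the inclusion $y+th \in B_{3\rho}(x) \subset B_{r_{z_i}}(z_i)$ lets me apply the graph description at $y+th$ and conclude via the Lipschitz estimate
\[
y_d + th_d \le \gamma_{z_i}(y') - tL|h'| \le \gamma_{z_i}(y'+th'),
\]
so that $y+th \in \Omega^c$ as required.

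The only delicate point is the uniform choice of $\rho$: one must ensure that the enlarged ball $B_{3\rho}(x)$, rather than just $B_{2\rho}(x)$, is contained in a single chart, since this is what makes the graph description applicable at the translated point $y+th$. This is exactly the role of the Lebesgue-number argument on the finite cover. Once that uniform $\rho$ is in hand, the aperture $\theta$ is dictated algebraically by the Lipschitz constant via $\tan\theta < 1/L$, and the conclusion drops out with no further work.
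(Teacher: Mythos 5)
The paper does not prove this proposition; it simply cites Grisvard \cite{Grisvard} for the uniform cone property of Lipschitz domains. Your argument is essentially the standard one behind that reference: local orthogonal charts in which $\partial\Omega$ is the graph of a Lipschitz function, compactness of $\partial\Omega$ to extract a finite atlas and a worst-case Lipschitz constant $L$, a Lebesgue-number choice of $\rho$ so that the \emph{enlarged} ball $B_{3\rho}(x)$ (not just $B_{2\rho}(x)$) fits in a single chart, and the algebraic aperture condition $\tan\theta < 1/L$. The verification in local coordinates is correct: $h\cdot\vn(x)\ge|h|\cos\theta$ gives $-h_d\ge|h|\cos\theta$, hence $|h'|\le|h|\sin\theta$ and $|h_d|\ge\cot\theta\,|h'| > L|h'|$, which propagates the subgraph condition $y_d\le\gamma(y')$ to $y_d+th_d\le\gamma(y'+th')$ along the segment. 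The case split on whether $B_{3\rho}(x)$ meets $\partial\Omega$ is also handled correctly.

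One small wrinkle worth flagging, though it is a defect of the paper's own conventions rather than your argument: the paper declares $\Omega^c:=\R^d\setminus\overline\Omega$ in the introduction, under which the strict inequality $y_d+th_d<\gamma(y'+th')$ would be needed, and this fails at $t=0$ or $h=0$ for points of $B_{2\rho}(x)\cap\partial\Omega$. Read literally, Definition \ref{def:outward_vectors} would then make $\calO_\rho(x)$ empty whenever $B_{2\rho}(x)$ touches $\partial\Omega$, and Proposition \ref{prop:cone-property} could not hold as stated (since $0\in\calC_\rho(\vn(x),\theta)$ always). The intended reading of $\Omega^c$ in Definition \ref{def:outward_vectors} is clearly $\R^d\setminus\Omega$, and with that reading your chain of non-strict inequalities $y_d+th_d\le\gamma_{z_i}(y')-tL|h'|\le\gamma_{z_i}(y'+th')$ delivers exactly what is needed.
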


Besov seminorms can be equivalently written as sums of norms over partitions, as long as the partitions have some overlap. We refer to \cite[Lemma 2.6]{BoNo21}.

\begin{lemma}[localization]\label{lem:localization}
    Let $p, q \in [1, \infty]$ and $\sigma \in (0,2)$. Let $\{ D_j \}_{j = 1}^J$ be a finite covering of $\Omega$ by balls of radius $\rho$, $D_j = D_\rho(x_j)$. Then, $v \in B^\sigma_{p,q}(\Omega)$ if and only if $v \big|_{\Omega \cap B_j} \in B^\sigma_{p,q}(\Omega \cap D_j)$ for all $j = 1, \ldots, J$, and
\begin{equation} \label{eq:localization1}  
\| v \|_{B^\sigma_{p,q}(\Omega)}^p\simeq \sum_{j=1}^J \| v \|_{B^\sigma_{p,q}(\Omega \cap D_j)}^p.
\end{equation}
Moreover, for $\delta \ge \rho$, let $\{ D_j \}_{j = 1}^J$ be a finite covering of $\Omega^\delta$ 
and let $v \colon \Rd \to \mathbb{R}$ be such that $\mbox{supp}(v) \subset \overline\Omega$. Then, $v \in \dot{B}^\sigma_{p,q}(\Omega)$ if and only if $v \big|_{D_j} \in B^\sigma_{p,q}(D_j)$ for all $j = 1, \ldots, J$, and
\begin{equation} \label{eq:localization2}
\| v \|_{\dot{B}^\sigma_{p,q}(\Omega)}^p \simeq | v |_{\dot{B}^\sigma_{p,q}(\Omega)}^p\simeq \sum_{j=1}^J | v |_{B^\sigma_{p,q}(D_j)}^p.
\end{equation}
The equivalence constants above depend on $s,p,q,\Omega$ and the covering chosen.
\end{lemma}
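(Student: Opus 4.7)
The plan is to reduce everything to the difference-quotient characterization of the Besov seminorm afforded by Proposition \ref{P:seminorms}, which expresses $|v|_{B^\sigma_{p,q}(\omega; D)}$ as an integral of second-difference $L^p$-norms over an admissible ball $D$ of increments. The $L^p$-norm parts of both equivalences are immediate from the finite-overlap property of the covers, so the entire argument concerns the seminorm contributions.

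For \eqref{eq:localization1}, the bound $\sum_j \|v\|_{B^\sigma_{p,q}(\Omega \cap D_j)}^p \lesssim \|v\|_{B^\sigma_{p,q}(\Omega)}^p$ follows because each $\Omega \cap D_j$ is a Lipschitz subdomain of $\Omega$, so the restriction map is bounded on $B^\sigma_{p,q}$; summing the $J$ estimates yields the claim. For the converse inequality, I would pass to a shrunk cover $\{D_{\rho/2}(x_j)\}$ (still a covering of $\Omega$) and take the ball of admissible increments to be $D_* = D_{\rho/4}(0)$. Then, for each $x \in \Omega$ and $h \in D_*$, there exists an index $j$ with $x, x \pm h \in D_j$, which yields the pointwise splitting
\[
\|v_h - 2v + v_{-h}\|^p_{L^p(\Omega_{|h|})} \le \sum_j \|v_h - 2v + v_{-h}\|^p_{L^p((\Omega \cap D_j)_{|h|})} .
\]
Integrating against $|h|^{-d - q\sigma}$ over $D_*$ and invoking Proposition \ref{P:seminorms} to identify the resulting sum with $\sum_j |v|_{B^\sigma_{p,q}(\Omega\cap D_j)}^q$ closes the argument.

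For \eqref{eq:localization2}, the extra ingredient is that $v$ extended by zero belongs to $B^\sigma_{p,q}(\Rd)$, so $|v|_{\dot B^\sigma_{p,q}(\Omega)}$ can be computed as a seminorm over all of $\Rd$. The condition $\delta \ge \rho$ guarantees that, for increments $|h| < \rho$, every point $x$ at which the second difference $v_h - 2v + v_{-h}$ is nonzero lies in $\Omega^\delta + D_\rho(0)$, a region covered by the enlargements of the $D_j$; contributions at which one of $x, x\pm h$ lies outside $\Omega$ vanish because $v \equiv 0$ there. A covering argument parallel to the one above then delivers the equivalence. \textbf{The principal obstacle} is precisely this boundary treatment in the second part: one must turn the geometric hypothesis $\delta \ge \rho$ into the combinatorial statement that the local seminorms on $\{D_j\}$ control the full homogeneous Besov seminorm, exploiting carefully that portions of $D_j$ lying in $\Omega^c$ contribute nothing.
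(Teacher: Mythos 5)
The paper does not prove this lemma; it merely cites \cite[Lemma~2.6]{BoNo21}, so there is no in-paper argument to compare against. Your general strategy---passing to a difference-quotient characterization of the Besov seminorm and running a covering argument---is the standard and correct route for such localization statements.

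There are, however, two genuine gaps. First, the converse direction in \eqref{eq:localization1} asserts that the halved balls $\{D_{\rho/2}(x_j)\}$ ``still'' cover $\Omega$; the stated hypothesis only gives that the original balls do, and halving the radii may break a covering. The fix is to work with a Lebesgue number $\lambda > 0$ of the open cover $\{D_j\}$ of the compact set $\overline\Omega$: any subset of diameter $< \lambda$ is contained in some $D_j$, so choosing the increment ball $D_*$ of radius $\lambda/4$ gives $\{x,x+h,x-h\} \subset D_j$ for some $j$, which is exactly what your pointwise splitting requires. Second, the boundary treatment in \eqref{eq:localization2} is off: for $|h| < \rho$ the support of $v_h - 2v + v_{-h}$ lies in $\overline\Omega + D_\rho(0)$, and what the hypothesis $\delta \ge \rho$ buys is precisely the inclusion $\overline\Omega + D_\rho(0) \subset \Omega^\delta$, so a covering of $\Omega^\delta$ already captures the second difference in full. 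You write instead that the support lies in $\Omega^\delta + D_\rho(0)$ and invoke ``enlargements of the $D_j$''; but that enlarged region is not covered by $\{D_j\}$, so as written this step does not close.

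A smaller point: the characterization you invoke is not Proposition~\ref{P:seminorms}, which only replaces balls by cones and only for $q = \infty$. What you actually need is the equivalence of the interpolation norm with the $L^p$-norm plus the second-difference seminorm over a ball of increments, which the paper attributes to \cite[Theorem~7.47]{AdamsFournier:2003}. The citation matters because you also need this equivalence on each $\Omega \cap D_j$ with constants controlled uniformly in $j$, which requires some regularity of those intersections---an implicit assumption worth making explicit.
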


\subsection{Localized translation operator}
Our next goal is to construct a smooth operator that resembles a translation around a certain given point $x_0\in\Rd$, while coincides with the identity away from $x_0$. Such {\em localized translation operator} plays an instrumental role in our derivation of regularity estimates. 

Given $x_0$ and $\rho$, we fix a cut-off function $\phi$ such that $0 \le \phi \le 1$, $\phi \equiv 1$ on the ball $D_\rho (x_0)$ of radius $\rho$ centered at $x_0$, $\mbox{supp}(\phi) \subset B_{2\rho}(x_0)$. Given $h \in \Rd$, we define
\begin{equation} \label{eq:translation-operator}
T_h v(x) := v\big(x + h \phi(x)\big) = \big( v \circ S_h \big) (x),
\end{equation}
where the map $S_h := I + h \phi$ is defined from $\Rd$ to $\Rd$. We restrict our consideration to $|h|$ small enough such that the Jacobian of $S_h$ satisfies
\[
\frac12 I \preccurlyeq \nabla S_h = I + h \otimes \nabla \phi \preccurlyeq 2 I.
\]
and thus $S_h$ is a one-to-one mapping from $D_{2\rho}(x_0)$ to $D_{2\rho}(x_0)$. It is also one-to-one from $\Rd$ to $\Rd$ and coincides with the identity in $D_{2\rho}(x_0)^c$.

In \cite{Savare98,BoNo21}, the localized translation operator
$ \widetilde{T}_hv := \phi v_h + (1-\phi) v $
was employed instead. The translation operator $T_h$ in \eqref{eq:translation-operator} is somewhat more flexible than $\widetilde{T}_h$, in the sense that it gives rise to cleaner regularity estimates in which a priori one gains one full derivative; compare the right-hand side in \eqref{eq:regularity-FG} below with the one in \cite[formula (3.3)]{BoNo21}. This leads to a simpler bootstrapping argument than in \cite{BoNo21}. We return to this point in \Cref{rmk:translation-operator} below.

\begin{remark}[properties of $S_h$ and $S_h^{-1}$] \label{rmk:Sh}
  Some important properties of the transformations $S_h$ and $S_h^{-1}$ follow immediately from their definitions. We have the inequalities
\begin{gather}\label{eq:easy-Sh}
|S_h(x) - x | \lesssim |h|, \quad \forall x \in \Rd,
\\ \label{eq:jacobian-Sh}
\left| \det(\nabla S_h(x)) - 1 \right|\lesssim |h|, \quad \forall x \in \Rd,
\\ \label{eq:quasi-isometry-Sh}
\left| \frac{|x-y|}{| S_h(x) - S_h(y) |} - 1 \right| \lesssim |h|, \quad \forall x,y \in \Rd.
\end{gather}
Analogous properties also hold for $S^{-1}_h$.
\end{remark}

\begin{remark}[boundedness of translations] Clearly, the operator $T_h$ in \eqref{eq:translation-operator}  is bounded from $L^p(\Rd)$ to $L^p(\Rd)$ and, more in general, from $W^k_p(\Rd)$ to $W^k_p(\Rd)$ for every $k \in \N$. Therefore, it is also bounded from $B^\sigma_{p,q}(\Rd)$ to $B^\sigma_{p,q}(\Rd)$ for any non-integer $\sigma > 0$. Moreover, if $h \in \calO_\rho(x_0)$ is an admissible outward vector (cf. Definition \ref{def:outward_vectors}) and $v \in \dot{B}^\sigma_{p,q}(\Omega)$, we have
\[ \begin{aligned}
& x \in \Omega^c \cap D_{2\rho}(x_0): \quad 0 \le \phi(x) \le 1 \Rightarrow S_h(x) = x + h \phi(x) \in \Omega^c \Rightarrow T_hv(x) = 0, \\
& x \in \Omega^c \setminus D_{2\rho}(x_0): \quad \phi(x) = 0 \Rightarrow T_hv(x) = v(x) = 0.
\end{aligned} \]
Therefore, $T_h$ is also a bounded operator from $\dot{B}^\sigma_{p,q}(\Omega)$ to $\dot{B}^\sigma_{p,q}(\Omega)$.
\end{remark}

\begin{lemma}[moduli of continuity]\label{lemma:translation}
Let $p, q \in [1, \infty]$. For all $\sigma \in (0,1)$ we have
\begin{equation} \label{eq:localized-translation-regularity}
\| v - T_h v \|_{L^p(D_{2\rho}(x_0))} \lesssim |h|^\sigma \|v\|_{B^{\sigma}_{p,q}(D_{2\rho}(x_0))} \quad \forall v \in B^{\sigma}_{p,q}(D_{2\rho}(x_0)).
\end{equation}
Moreover, for all $r > 0$ and $\sigma \in [0, 1]$, we have
\begin{equation} \label{eq:localized-translation-regularity-higher}
\| v - T_h v \|_{B^{r}_{p,q}(D_{2\rho}(x_0))} \lesssim |h|^\sigma \|v\|_{B^{r+\sigma}_{p,q}(D_{2\rho}(x_0))} \quad \forall v \in B^{r+\sigma}_{p,q}(D_{2\rho}(x_0)).
\end{equation}
\end{lemma}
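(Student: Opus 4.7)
The plan is to prove both estimates by real interpolation of two endpoint bounds on the operator $A_h := I - T_h$, starting from the elementary identity
\[
v(x) - T_h v(x) = -\int_0^1 \nabla v\bigl(x + t h \phi(x)\bigr) \cdot h\phi(x) \, dt,
\]
valid for smooth $v$ (and extended by density). It expresses $A_h v$ as the first-order displacement of $v$ along the segment from $x$ to $S_h(x)$, whose length is at most $|h|$.

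For \eqref{eq:localized-translation-regularity}, I would first establish two endpoint bounds with target $L^p(D_{2\rho}(x_0))$: namely $A_h \colon L^p \to L^p$ with norm $\lesssim 1$, by the change of variables $y = S_h(x)$ and the uniform Jacobian bounds for $S_h$ recorded above; and $A_h \colon W^1_p \to L^p$ with norm $\lesssim |h|$, by applying Minkowski's integral inequality to the identity together with the same change of variables. Real interpolation (which is exact) then yields $\|A_h v\|_{L^p(D_{2\rho}(x_0))} \lesssim |h|^\sigma \|v\|_{B^\sigma_{p,q}(D_{2\rho}(x_0))}$ for $\sigma \in (0,1)$ through $(L^p, W^1_p)_{\sigma, q} = B^\sigma_{p,q}$.

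For \eqref{eq:localized-translation-regularity-higher}, the strategy is parallel but with target $B^r_{p,q}(D_{2\rho}(x_0))$. The endpoint $\sigma = 0$ reduces to boundedness of $T_h$ on $B^r_{p,q}$: for non-integer $r$ this is stated in the remark preceding the lemma, and for integer $r$ it follows by interpolating the $W^k_p$-boundedness of $T_h$ across two integers bracketing $r$, using $B^r_{p,q} = (L^p, W^M_p)_{r/M, q}$ for any integer $M > r$. For the endpoint $\sigma = 1$, I would apply Minkowski's integral inequality in $B^r_{p,q}$ to the identity, reducing the task to the bound
\[
\sup_{t \in [0,1]} \bigl\| T_{th}(\nabla v)\,\phi \bigr\|_{B^r_{p,q}(D_{2\rho}(x_0))} \lesssim \|\nabla v\|_{B^r_{p,q}(D_{2\rho}(x_0))} \lesssim \|v\|_{B^{r+1}_{p,q}(D_{2\rho}(x_0))},
\]
which follows from uniform-in-$t$ boundedness of $T_{th}$ on $B^r_{p,q}$, boundedness of multiplication by the smooth cutoff $\phi$, and the standard mapping property of differentiation between Besov spaces. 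The reiteration identity \eqref{eq:interpolation_Besov} then gives $(B^r_{p,q}, B^{r+1}_{p,q})_{\sigma, q} = B^{r+\sigma}_{p,q}$ for $\sigma \in (0,1)$, and the cases $\sigma \in \{0,1\}$ are the endpoints themselves.

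The main technical subtlety I expect is the $\sigma = 1$ endpoint of \eqref{eq:localized-translation-regularity-higher}: one must control $T_{th}(\nabla v)$ in $B^r_{p,q}$ uniformly in $t$, which for integer $r$ is not directly stated by the preceding remark and requires the interpolation detour above. The other ingredients (change of variables on the ball $D_{2\rho}(x_0)$ via the bi-Lipschitz diffeomorphism $S_h$, Minkowski's inequality in Besov norms, and pointwise multiplication by smooth cutoffs) are classical, and once both endpoint bounds are in hand the real interpolation theorem closes both estimates with the claimed powers of $|h|$.
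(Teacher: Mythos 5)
Your proposal is correct and follows the same interpolation-between-endpoints strategy as the paper; the only genuine difference is in how the endpoints for \eqref{eq:localized-translation-regularity-higher} are established. The paper works at the integer-Sobolev level: it proves $\|v - T_h v\|_{W^k_p} \lesssim |h| \|v\|_{W^{k+1}_p}$ by a pointwise commutator estimate comparing $(D^\alpha v)\circ S_h$ with $D^\alpha(v\circ S_h)$, combines this with $W^k_p$-boundedness of $T_h$, interpolates to reach $\|v - T_h v\|_{W^k_p}\lesssim |h|^\sigma\|v\|_{B^{k+\sigma}_{p,q}}$, and then reiterates across two consecutive integers to land in $B^r_{p,q}$ for general $r>0$. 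You instead prove the two endpoint bounds directly with target $B^r_{p,q}$, via the Bochner-integral identity $v - T_hv = -\int_0^1 (T_{th}\nabla v)\cdot(h\phi)\,dt$ together with Minkowski's inequality in $B^r_{p,q}$, boundedness of pointwise multiplication by the smooth cutoff $\phi$, and the mapping property of $\nabla$; this collapses the reiteration into a single interpolation step at the price of invoking more Besov-space machinery at the endpoint level. Both routes are sound; the paper's stays closer to classical Sobolev calculus, while yours is organizationally cleaner once the Besov-level tools are granted. One small point worth making explicit in your write-up: the uniform-in-$t$ boundedness of $T_{th}$ on $B^r_{p,q}$ follows by the same interpolation of $W^k_p$-boundedness used for $T_h$ (the constants in \eqref{eq:easy-Sh}--\eqref{eq:quasi-isometry-Sh} are monotone in $|h|$), so this is not a gap, but it should be flagged rather than assumed.
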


\begin{proof}
In first place, we remark that because $T_h v = v \circ S_h$,  $T_hv -v$ vanishes on $D_{2\rho}(x_0)^c$. The boundedness of $T_h$ on $L^p(\Rd)$ yields
\[
\| v - T_h v \|_{L^p(D_{2\rho}(x_0))} \lesssim \| v \|_{L^p(D_{2\rho}(x_0))},
\]
while a standard calculation gives
\begin{equation} \label{eq:localized-translation-W1p}
\| v - T_h v \|_{L^p(D_{2\rho}(x_0))} \lesssim |h| \, \| \nabla v \|_{L^p(D_{2\rho}(x_0))}.
\end{equation}
Estimate \eqref{eq:localized-translation-regularity} follows by interpolation.

We next consider higher-order derivatives. Given a positive integer $k$ and $\alpha$ with $|\alpha| = k$, a direct calculation gives for any sufficiently smooth $v$ and $x \in D_{2\rho}(x_0)$,
\[
\left| \left(D^{\alpha} v \right)  \circ S_h (x) - D^{\alpha} \left( v \circ S_h(x) \right) \right| \lesssim  |h| \Vert \phi \Vert_{C^{k}(\Rd)} \sum_{0 < |\alpha'| \le k} \big| \big(D^{\alpha'} v\big) \circ S_h(x) \big|.
\]
Consequently, applying \eqref{eq:localized-translation-W1p} to $\left(D^{\alpha} v \right) \circ S_h$, we deduce
\[
\begin{aligned}
&\| v - T_h v \|_{W^k_p(D_{2\rho}(x_0))} = \| v - v \circ S_h \|_{W^k_p(D_{2\rho}(x_0))} \\
&\le \!\!\!\! \sum_{0 \le |\alpha| \le k} \!\!\!\! \| \! \left(D^{\alpha} v \right) \!\circ\! S_h \! - \! D^{\alpha} \left( v \!\circ\! S_h \right) \! \|_{L^p(D_{2\rho}(x_0))} \! + \! \| \! \left(D^{\alpha} v \right) \!\circ\! S_h \! - \! D^{\alpha} v  \|_{L^p(D_{2\rho}(x_0))} \\
&\lesssim |h| \!\! \sum_{0 < |\alpha'| \le k+1} \| D^{\alpha'} v \|_{L^p(D_{2\rho}(x_0))}
\le |h| \| v \|_{W^{k+1}_p(D_{2\rho}(x_0))}.
\end{aligned}
\]
By using the boundedness of $T_h$ on $W^k_p(\Rd)$ and interpolation, we deduce that
\begin{equation*}
\| v - T_h v \|_{W^k_p(D_{2\rho}(x_0))} \lesssim |h|^\sigma \|v\|_{B^{k+\sigma}_{p,q}(D_{2\rho}(x_0))} \quad \forall v \in B^{k+\sigma}_{p,q}(D_{2\rho}(x_0))
\end{equation*}
for $\sigma \in (0, 1)$. Finally, we obtain \eqref{eq:localized-translation-regularity-higher} by combining this estimate with \eqref{eq:interpolation_Besov}.
\end{proof}

\begin{remark}
The only properties of $S_h$ we exploited in the previous lemma are the fact that it maps $D_{2\rho}(x_0)$ onto $D_{2\rho}(x_0)$, and that the resulting translation operator $T_h v = v \circ S_h$ is stable in $W^k_p(D_{2\rho}(x_0))$ and satisfies properties like \eqref{eq:localized-translation-W1p}. Thus, the same arguments can be applied to the translation operator $v \circ S_h^{-1}$. In particular, we have 
\[
\| v - v\circ S_h^{-1} \|_{W^k_p(D_{2\rho}(x_0))} \lesssim |h|^\sigma |v|_{B^{k+\sigma}_{p,q}(D_{2\rho}(x_0))} \quad \forall v \in B^{k+\sigma}_{p,q}(D_{2\rho}(x_0))
\]
for $\sigma \in (0, 1)$, and 
\[
\| v - v\circ S_h^{-1} \|_{B^{r}_{p,q}(D_{2\rho}(x_0))} \lesssim |h|^\sigma \|v\|_{B^{r+\sigma}_{p,q}(D_{2\rho}(x_0))} \quad \forall v \in B^{r+\sigma}_{p,q}(D_{2\rho}(x_0))
\]
for $r > 0, \sigma \in [0, 1]$.
\end{remark}

\subsection{Assumptions on the energy}
We recall the energy \eqref{eq:def-energy},
\[
\calF (u) = \iint_{Q_\Omega} G \left( x, y, \frac{u(x)-u(y)}{|x-y|^s} \right) \frac1{|x-y|^{d}} \, dy dx - \langle f, u \rangle.
\]
Here, we list the conditions we require on this functional, discuss some consequences of these conditions, and how they apply to the problems we are interested in.

\begin{hypothesis} \label{hyp:G} The function $G \colon \R^d \times \R^d \times \R \to (0, \infty)$ satisfies the following conditions.

\smallskip

\noindent {\bf 1. Convexity.} The function $\rho \mapsto G(x,y,\rho)$ is uniformly convex for each $x,y \in \Rd$.

\smallskip

\noindent {\bf 2. Space continuity.} There exists $\beta \in (0,1]$ such that $G$ is $\beta$-H\"older continuous with respect to the space variables:
for all $x,y, x',y' \in \Rd$ and $\rho \in \R,$
\begin{equation} \label{eq:G-Lipschitz}
|G(x,y,\rho) - G(x',y',\rho)| \le C (|x-x'|^\beta + |y-y'|^\beta) |\rho|^p. 
\end{equation}

\smallskip

\noindent {\bf 3. $p$-growth.} There exists some $p \in (1,\infty)$ such that, for all $x,y \in \Rd$, $G(x,y,\rho)$ is differentiable with respect to $\rho$ with
\begin{equation}\label{eq:p-growth}
| G(x,y,\rho) | \le C |\rho|^p , \qquad | G_\rho(x,y,\rho) | \le C |\rho|^{p-1}.
\end{equation}

\smallskip

\noindent {\bf 4. Monotonicity.} There exists $\alpha > 0$ such that, if $2 \le p < \infty$ in the {\bf $p$-growth} condition, then for all 
$x,y, \in \Rd$ and $\rho, \rho' \in \R,$
\begin{equation*} \label{eq:G-monotone-p-big}
(G_\rho (x,y,\rho) - G_\rho (x,y,\rho')) (\rho - \rho') \ge \alpha |\rho - \rho'|^p ,
\end{equation*}
while if $1< p < 2$, then  for all  $x,y, \in \Rd$ and $\rho, \rho' \in \R,$
\begin{equation*} \label{eq:G-monotone-p-small}
(G_\rho (x,y,\rho) - G_\rho (x,y,\rho')) (\rho - \rho') \ge \alpha |\rho - \rho'|^2 \big| |\rho| + |\rho'| \big |^{p-2}.
\end{equation*}

\smallskip

\noindent {\bf 5. Continuity.}  There exists $c > 0$ such that, if $2 \le p < \infty$ in the {\bf $p$-growth} condition, then
\begin{equation*} \label{eq:G-cont-p-big}
|G_\rho (x,y,\rho) - G_\rho (x,y,\rho')| \le c |\rho - \rho'| \, \big| |\rho| + |\rho'| \big|^{p-2} \quad \forall x,y, \in \Rd, \ \rho, \rho' \in \R,
\end{equation*}
while if $1< p < 2$, then
\[
|G_\rho (x,y,\rho) - G_\rho (x,y,\rho')| \le c |\rho - \rho'|^{p-1} \quad \forall x,y, \in \Rd, \ \rho, \rho' \in \R.
\]
\end{hypothesis}

\begin{remark}[symmetry]
While not strictly needed for our purposes, the following assumption is practical for the analysis and applies to a general class of operators:

\smallskip

\noindent {\bf 6. Symmetry.} The function $G$ is symmetric with respect to the space variables and with respect to $\rho$,
\[
G(x,y,\rho) = G(y,x,\rho), \quad G(x,y,\rho) = G(x,y,-\rho), \quad \forall x,y \in \Rd, \rho \in \R.
\]

\smallskip

Under this symmetry assumption, the operator $\calA$ associated with the energy minimization problem becomes \eqref{eq:strong-symmetric}; otherwise, it takes the form \eqref{eq:strong}.
\end{remark}

\begin{remark}[monotonicity] \label{rem:monotonicity}
The {\bf monotonicity} hypothesis above implies the following estimates for the operator $\calA$ in \eqref{eq:strong}. If $2 \le p < \infty$, there exists $\alpha > 0$ such that,
for all $ u,v \in \widetilde W^s_p(\Omega),$ 
\begin{equation}\label{eq:monotonicity-pge2}
\langle \calA u - \calA v, u - v \rangle \ge \alpha \| u-v \|_{\widetilde{W}^s_p(\Omega)}^p;
\end{equation}
hence $\calA$ is $p$-coercive in $\widetilde W^s_p(\Omega)$. Instead, if $1< p < 2$, we have for all $ u,v \in \widetilde W^s_p(\Omega)$
\begin{equation}\label{eq:monotonicity-ple2}
\langle \calA u - \calA v, u - v \rangle \ge \alpha \| u-v \|_{\widetilde{W}^s_p(\Omega)}^2 \left( \| u \|_{\widetilde{W}^s_p(\Omega)} + \| v \|_{\widetilde{W}^s_p(\Omega)} \right)^{p-2},
\end{equation}
whence $\calA$ is $2$-coercive on bounded sets in $\widetilde W^s_p(\Omega)$:
\[
\langle \calA u - \calA v, u - v \rangle \ge C(R) \| u - v\|_{\widetilde{W}^s_p(\Omega)}^2 
\]
provided $\| u \|_{\widetilde{W}^s_p(\Omega)}, \| v \|_{\widetilde{W}^s_p(\Omega)} \le R$.
It is worth realizing that \eqref{eq:monotonicity-pge2} cannot hold for $p<2$ and $G$ smooth and convex
  \cite[Remark 2.1]{Savare98}. This fact is responsible for the dichotomy between \eqref{eq:p>2} and \eqref{eq:p<2}
  and reveals that our variational approach, which hinges on \eqref{eq:monotonicity-ple2} for $p<2$, cannot improve upon \eqref{eq:p<2}.
\end{remark}

\begin{remark}[continuity] \label{rem:continuity}
The {\bf continuity} hypothesis implies that the operator $\calA$ satisfies the following bounds: if $p \in (1,2]$ then for all $u,v \in \widetilde{W}^s_p(\Omega)$,
\begin{equation} \label{eq:continuity-ple2}
\| \calA u - \calA v \|_{W^{-s}_{p'}(\Omega)} \le C \| u-v \|_{\widetilde{W}^s_p(\Omega)}^{p-1} ,
\end{equation}
while if $p \in [2,\infty)$ then for all $u,v \in \widetilde{W}^{s,p}(\Omega)$,
\begin{equation} \label{eq:continuity-pge2}
\| \calA u - \calA v \|_{W^{-s}_{p'}(\Omega)} \le C \left( \| u \|_{\widetilde{W}^s_p(\Omega)} + \| v \|_{\widetilde{W}^s_p(\Omega)} \right)^{p-2} \| u-v \|_{\widetilde{W}^s_p(\Omega)} .
\end{equation}
\end{remark}

\begin{remark}[solution operator] \label{R:solution-operator}
 The uniform convexity of $G$ yields existence and uniqueness of weak solutions: given $f \in W^{-s}_{p'}(\Omega)$, the
  problem
\begin{equation}\label{eq:weak}
  u_f \in \widetilde{W}^s_p(\Omega) \quad
  \langle \calA u_f , v \rangle = \langle f , v \rangle \quad \forall v \in  \widetilde{W}^s_p(\Omega),
\end{equation}    
admits a unique solution, where
\begin{equation*}  
  \langle \calA u , v \rangle = \iint_{\Rd\times\Rd} \widetilde G \left( x, y, \frac{u(x)-u(y)}{|x-y|^s} \right) \frac{(u(x)-u(y))(v(x)-v(y))}{|x-y|^{d+2s}} \, dx \, dy
\end{equation*}
and $\widetilde G (x,y, \rho) := G_\rho (x,y,\rho)/\rho$.     
Testing \eqref{eq:weak} with $v = u_f$ (or, equivalently, setting $v\equiv0$ in \eqref{eq:monotonicity-pge2}--\eqref{eq:monotonicity-ple2}), we immediately reach the stability estimate
\begin{equation} \label{eq:stability}
\| u_f \|_{\widetilde{W}^s_p(\Omega)} \le \frac1{\alpha^\frac1{p-1}}\| f \|_{W^{-s}_{p'}(\Omega)}^{\frac1{p-1}}.
\end{equation}
We now assess the continuity properties of the solution operator $f\mapsto u_f$.
If we assume $\| f \|_{W^{-s}_{p'}(\Omega)} \le K$, then $u_f$ satisfies $\|u_f\|_{\wt{W}^s_p(\Omega)} \le \left(\frac{K}\alpha\right)^{\frac{1}{p-1}}$ in view of \eqref{eq:stability}. This shows that, denoting
\[
\overline{B}_K := \{ f \in W^{-s}_{p'}(\Omega) \colon \| f \|_{W^{-s}_{p'}(\Omega)} \le K \},
\]
and using \eqref{eq:monotonicity-ple2},
then the solution operator defined on $\overline{B}_K$ is Lipschitz continuous,
\begin{equation} \label{eq:local-Lip-solution-op}
\|u_f - u_g \|_{\widetilde{W}^s_p(\Omega)} \le c(K) \| f - g \|_{W^{-s}_{p'}(\Omega)}
\end{equation}
for $p\in(1,2)$.
In contrast, if $p \in [2,\infty)$, then the solution map is H\"older continuous on $W^{-s}_{p'}(\Omega)$ because of \eqref{eq:monotonicity-pge2},
\begin{equation} \label{eq:Holder-solution-op}
\|u_f - u_g \|_{\widetilde{W}^s_p(\Omega)} \le \frac1{\alpha^\frac1{p-1}}\| f -g \|_{W^{-s}_{p'}(\Omega)}^{\frac1{p-1}}.
\end{equation}
\end{remark}

\bigskip\noindent
{\bf Fractional $(p,s)$-Laplacians.} For $p \in (1,\infty)$ we consider $G(x,y,\rho) = \frac{C_{d,s,p}}{2 p}|\rho|^p$ in \eqref{eq:def-energy}, which gives rise to \eqref{eq:def-pLap}. It is clear that the parameter $p$ in this definition corresponds to the parameter $p$ in Hypothesis \ref{hyp:G}, and therefore the {\bf convexity} and {\bf $p$-growth} conditions \eqref{eq:p-growth} hold. Moreover, because $G$ is independent of the space variables, it is trivially symmetric with respect to $x,y$, and we can take $\beta = 1$ in  \eqref{eq:G-Lipschitz}. 
The {\bf monotonicity} and {\bf continuity} assumptions are satisfied because of the following auxiliary identities \cite[\S5]{GlMa75}: 
for all $a,b \in \R$, we have
\begin{equation*} \label{eq:aux-ineq}
\left| |a|^{p-2} a - |b|^{p-2} b \right| \le 
\left\lbrace \begin{array}{rl}
C |a-b|^{p-1} & \mbox{if } 1 < p \le 2, \\
C |a-b| (|a|+|b|)^{p-2} & \mbox{if } 2 \le p < \infty,
\end{array} \right.
\end{equation*}
and
\begin{equation*} \label{eq:aux-ineq2}
\left( |a|^{p-2} a - |b|^{p-2} b \right) (a - b) \ge 
\left\lbrace \begin{array}{rl}
\alpha |a-b|^2 (|a|+|b|)^{p-2} & \mbox{if } 1 < p \le 2, \\
\alpha  |a-b|^p & \mbox{if } 2 \le p < \infty.
\end{array} \right.
\end{equation*}
Therefore, the Hypothesis \ref{hyp:G} covers fractional $(p,s)$-Laplace operators \eqref{eq:def-pLap}. 

\subsection{Regularity of functionals}
Inspired by \cite{Savare98}, we introduce a notion of regularity of functionals that measures their sensitivity  with respect to a family of perturbations.

\begin{definition}[$(T,D,\sigma)$-regularity]\label{def:regularity}
Let $V$ be a Banach space, $K \subset V$ and $\sigma >0$. Given a family of maps $T_h \colon K \to K$, with $h$ varying on a given set $D \subset \Rd,$ we say that a functional $\calF$ is $(T,D, \sigma)$-regular on $K$ if, for all $v \in K$,
\[
\omega(v) = \omega(v; \calF, T, D, \sigma) := \sup_{h \in D} \frac{\calF (T_h v) - \calF (v)}{|h|^\sigma} < \infty .
\]
\end{definition}

\begin{remark}[subadditivity]
The modulus $\omega$ of $(T,D,\sigma)$-regularity is subadditive with respect to the $\calF$-argument:
\begin{equation} \label{eq:subadditivity}
\omega(v; \calF_1 + \calF_2, T, D, \sigma) \le \omega(v; \calF_1, T, D, \sigma)
+ \omega(v; \calF_2, T, D, \sigma).
\end{equation}
Thus, in order to prove the $(T,D,\sigma)$-regularity of $\calF_1 + \calF_2$, it suffices to show the regularity of each of the two functionals separately.
\end{remark}

A key consequence of the {\bf monotonicity} assumption in Hypothesis \ref{hyp:G} is the following estimate \cite[Theorem 1 and Corollary 1]{Savare98}.

\begin{lemma}[regularity and minimizers] \label{lem:corollary1}
Let $x_0 \in \Rd$, $\rho > 0$ and $h \in \mathcal{O}_\rho(x_0)$. Consider translation operators $T_h \colon \widetilde{W}^s_p(\Omega) \to \widetilde{W}^s_p(\Omega)$ as in \eqref{eq:translation-operator}.
If $u$ solves \eqref{eq:Dirichlet}, the functional $\calF$ defined in \eqref{eq:def-energy} satisfies Hypotheses \ref{hyp:G} and it is $(T,D,\sigma)$-regular on $\widetilde{W}^s_p(\Omega)$ for some $\sigma >0$, then the following hold:
\begin{itemize}
\item If $p\ge2$, then
\begin{equation} \label{eq:reg-min-p-big}
\alpha \|u-T_h u\|_{\widetilde{W}^s_p(\Omega)}^p \le p \omega(u) |h|^\sigma.
\end{equation}
\item If $1<p<2$, then 
\begin{equation} \label{eq:reg-min-p-small}
\alpha \|u-T_h u\|_{\widetilde{W}^s_p(\Omega)}^2 \le C(p) \left( \|u\|_{\widetilde{W}^s_p(\Omega)} + \|T_h u\|_{\widetilde{W}^s_p(\Omega)}\right)^{2-p} \omega(u) |h|^\sigma.
\end{equation}
\end{itemize}
\end{lemma}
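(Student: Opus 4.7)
The plan is to use the uniform convexity of $\calF_0(v) := \calF(v) + \langle f, v\rangle$ around the minimizer $u$. Since $u$ solves \eqref{eq:Dirichlet}, we have $\calA u = f$ in $(\widetilde{W}^s_p(\Omega))'$. Moreover, for admissible $h \in \calO_\rho(x_0)$ the translated function $T_h u$ belongs to $\widetilde{W}^s_p(\Omega)$, so it is a legitimate test element. The proof then reduces to showing, for every $v \in \widetilde{W}^s_p(\Omega)$, the uniform-convexity estimate
\begin{equation*}
\calF(v) - \calF(u) \;\ge\; \tfrac{\alpha}{p}\,\|v-u\|_{\widetilde{W}^s_p(\Omega)}^p \qquad (p\ge2),
\end{equation*}
and the analogous bound in the singular range, and then specializing to $v = T_h u$ and invoking the definition of $\omega(u)$.

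To prove these convexity estimates, I would apply the fundamental theorem of calculus along the segment $t \mapsto u + t(v-u)$, which yields
\begin{equation*}
\calF_0(v)-\calF_0(u) \;=\; \langle \calA u, v-u \rangle + \int_0^1 \tfrac{1}{t} \, \langle \calA(u+t(v-u)) - \calA u,\; t(v-u) \rangle \, dt.
\end{equation*}
For $p\ge2$, the monotonicity bound \eqref{eq:monotonicity-pge2} from Remark \ref{rem:monotonicity} gives $\langle \calA(u+t(v-u)) - \calA u, t(v-u)\rangle \ge \alpha t^p\|v-u\|^p_{\widetilde{W}^s_p(\Omega)}$, so that the integral equals $\tfrac{\alpha}{p}\|v-u\|^p_{\widetilde{W}^s_p(\Omega)}$. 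Subtracting $\langle f, v-u\rangle$ and using $\calA u = f$ cancels the first-order term, leaving $\calF(v)-\calF(u) \ge \tfrac{\alpha}{p}\|v-u\|^p_{\widetilde{W}^s_p(\Omega)}$. Taking $v = T_h u$ and bounding $\calF(T_h u)-\calF(u) \le \omega(u)|h|^\sigma$ by Definition \ref{def:regularity} yields \eqref{eq:reg-min-p-big}.

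For $1<p<2$ I would repeat the calculation using \eqref{eq:monotonicity-ple2} instead. The integrand is bounded below by $\alpha \, t \, \|v-u\|^2_{\widetilde{W}^s_p(\Omega)}\bigl(\|u+t(v-u)\|_{\widetilde{W}^s_p(\Omega)}+\|u\|_{\widetilde{W}^s_p(\Omega)}\bigr)^{p-2}$, and since $p-2<0$ I bound the parenthesis from above by $3\bigl(\|u\|_{\widetilde{W}^s_p(\Omega)}+\|v\|_{\widetilde{W}^s_p(\Omega)}\bigr)$ via the triangle inequality. Integrating $\int_0^1 t\,dt = 1/2$ produces the constant $C(p) = 2 \cdot 3^{2-p}$ and the bound
\begin{equation*}
\calF(v)-\calF(u) \;\ge\; \tfrac{\alpha}{C(p)} \, \|v-u\|^2_{\widetilde{W}^s_p(\Omega)} \, \bigl(\|u\|_{\widetilde{W}^s_p(\Omega)}+\|v\|_{\widetilde{W}^s_p(\Omega)}\bigr)^{p-2}.
\end{equation*}
Specializing $v = T_h u$ and rearranging (moving the $(p-2)$-th power to the right-hand side, where it becomes a $(2-p)$-th power) gives \eqref{eq:reg-min-p-small}.

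The main subtlety I anticipate lies in the singular range: because the monotonicity constant in \eqref{eq:monotonicity-ple2} degenerates as $\|u\|+\|v\|$ grows, one has to be careful when taking the supremum along the segment to retain a bound that depends only on $\|u\|_{\widetilde{W}^s_p(\Omega)}$ and $\|T_h u\|_{\widetilde{W}^s_p(\Omega)}$, not on intermediate states. The triangle-inequality bookkeeping sketched above handles this and produces the correct constant $C(p)$; everything else is the direct combination of minimality, convexity along segments, and the regularity modulus.
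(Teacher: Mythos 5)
Your proof is correct and follows the same route the paper relies on: the paper cites \cite[Theorem 1 and Corollary 1]{Savare98} for this lemma rather than proving it inline, but the identical convexity-along-segments computation (write $\calF(v)-\calF(u)=\int_0^1\langle\calA(u+t(v-u))-\calA u,t(v-u)\rangle\,\tfrac{dt}{t}$, use minimality to kill the first-order term, and apply \eqref{eq:monotonicity-pge2} or \eqref{eq:monotonicity-ple2}) reappears verbatim in the paper's proof of \Cref{thm:FE-error-Chow}. Your triangle-inequality handling of the intermediate states in the singular range $1<p<2$ is exactly the needed bookkeeping; the precise constant $C(p)$ is immaterial.
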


Let us explain the crucial role of Lemma \ref{lem:corollary1} in the proof of regularity of solutions, and how localized translations come into play. Let $D_\rho(x_0)$ be a ball with center $x_0 \in \Omega$ and radius $\rho$ satisfying \Cref{prop:cone-property} (uniform cone property), and let $D = \calC_\rho (\vn(x_0), \theta)$. For $p\in[2,\infty)$ we combine \Cref{prop:bound-Besov} (reiteration of Besov seminorms) with \eqref{eq:reg-min-p-big} to obtain
\begin{equation} \label{eq:reg-steps}
  \begin{split}
  |u|^p_{B^{s+\sigma/p}_{p,\infty} (D_\rho(x_0))} & \lesssim \sup_{h\in D} \frac{|u - u_h|^p_{W^s_p(D_\rho(x_0))}}{|h|^\sigma}
  \\
  & \lesssim \sup_{h\in D} \frac{\| u - T_h u\|^p_{\wt{W}^s_p(\Omega)}}{|h|^\sigma}
  \lesssim \omega(u; \calF, T, D, \sigma),
  \end{split}
\end{equation}
for $\sigma \in (0,1]$.
Instead for $p\in(1,2)$, combining \Cref{prop:bound-Besov} with \eqref{eq:reg-min-p-small} yields
\begin{equation} \label{eq:reg-steps-2}
      \begin{split}
  |u|^p_{B^{s+\sigma/2}_{p,\infty} (D_\rho(x_0))} & \lesssim \sup_{h\in D} \frac{\| u - T_h u\|^2_{\wt{W}^s_p(\Omega)}}{|h|^\sigma}
  \\
  & \lesssim \Big(\|u\|_{\wt{W}^s_p(\Omega)} + \|T_h u\|_{\wt{W}^s_p(\Omega)} \Big)^{2-p}\omega(u; \calF, T, D, \sigma).
      \end{split}
\end{equation}
This shows that proving that $\calF$ satisfies \Cref{def:regularity} ($(T,D,\sigma)$-regularity) for some $\sigma \in(0,1]$ gives rise to local regularity of solutions. 
    
\section{Regularity} \label{sec:regularity}
In this section we obtain regularity estimates for solutions to \eqref{eq:weak}. For that purpose, we analyze the regularity of the functional $\calF$ in \eqref{eq:def-energy} in the sense of Definition \ref{def:regularity}, and exploit the crucial property that the operator $T_h$ \eqref{eq:translation-operator} is locally a translation and $T_h v \in \widetilde{W}^s_p(\Omega)$ for all $v \in \widetilde{W}^s_p(\Omega)$. 

We split the energy in \eqref{eq:def-energy} as $\calF = \calF_G - \calF_1$, with 
\begin{equation} \label{eq:functionals} \begin{split}
& \calF_G(u) := \iint_{\Rd \times \Rd} G \left( x, y, \frac{u(x)-u(y)}{|x-y|^s} \right) \frac1{|x-y|^d} \, dy dx ,  \\
& \calF_1 (u) := \langle f, u \rangle
\end{split} \end{equation}
and recall that, by the subadditivity property \eqref{eq:subadditivity}, we can treat the two terms separately.

\subsection{Regularity of the functionals}\label{S:regularity-functionals}
As a first step towards deriving regularity estimates for minimizers of \eqref{eq:def-energy}, we prove the regularity of $\calF_1$ and $\calF_G$ with respect to the family of admissible outward vectors, cf. \Cref{def:outward_vectors}. We begin with an estimate on the linear part of the functional.

\begin{proposition}[regularity of $\calF_1$] \label{prop:regularity-f}
Let $q \in (1, \infty]$, $p \in (1,\infty)$, $p' = p/(p-1)$, $\sigma \in (0,1]$ and $t \in (-1,\sigma)$.
If $f \in B^{t}_{p',q'}(\Omega)$, then
$\calF_1$ is $(T, \calO_\rho(x_0),\sigma)$-regular in $\dot{B}^{\sigma-t}_{p,q}(\Omega)$ for all $x_0 \in \Omega$, $\rho > 0$.
Namely, for all $v \in \dot B^{\sigma-t}_{p,q}(\Omega)$
\begin{equation}\label{eq:regularity-f-Besov}
\sup_{h \in \calO_\rho(x_0) } \frac{\calF_1 (T_h v) - \calF_1 (v)}{|h|^{\sigma}} \lesssim \| f \|_{B^{t}_{p',q'}(D_{2\rho}(x_0) \cap \Omega)}  \| v \|_{B^{\sigma-t}_{p,q}(D_{2\rho}(x_0))}.
\end{equation}
\end{proposition}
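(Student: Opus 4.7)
Start from the identity $\calF_1(T_h v) - \calF_1(v) = \langle f, T_h v - v\rangle$, and note that, by construction of $T_h$ and the admissibility of $h \in \calO_\rho(x_0)$ (which ensures $T_h v \in \wt W^s_p(\Omega)$), the difference $T_h v - v$ is supported in $D_{2\rho}(x_0) \cap \overline\Omega$. Hence the pairing localizes to $D_{2\rho}(x_0) \cap \Omega$ and can be estimated by duality, via the identification $\dot B^{\sigma}_{p, q}(\Omega) = (B^{-\sigma}_{p', q'}(\Omega))'$ recorded in the preliminaries. The proof then splits into two cases depending on the sign of $t$.

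In the range $t \in (-1, 0]$, duality gives
\[
|\langle f, T_h v - v\rangle| \le \|f\|_{B^t_{p', q'}(D_{2\rho}(x_0) \cap \Omega)} \, \|T_h v - v\|_{B^{-t}_{p, q}(D_{2\rho}(x_0))},
\]
and Lemma \ref{lemma:translation} produces $\|T_h v - v\|_{B^{-t}_{p, q}} \lesssim |h|^\sigma \|v\|_{B^{\sigma - t}_{p, q}}$, via \eqref{eq:localized-translation-regularity} when $t = 0$ and via \eqref{eq:localized-translation-regularity-higher} applied with $r = -t > 0$ when $t \in (-1, 0)$. Concatenation yields \eqref{eq:regularity-f-Besov} in this range.

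The harder range is $t \in (0, \sigma)$, in which the dual order $-t$ is negative and Lemma \ref{lemma:translation} does not apply directly to $T_h v - v$. The strategy here is a bilinear interpolation of $B(f, v) := \langle f, T_h v - v\rangle$ between two endpoint estimates in which all the smoothness sits on one argument. The first endpoint, $|B(f, v)| \lesssim |h|^\sigma \|f\|_{L^{p'}} \|v\|_{B^\sigma_{p, q}}$, follows from H\"older and \eqref{eq:localized-translation-regularity}. For the second, transfer the localized translation from $v$ to $f$ by change of variables $y = S_h(x)$: defining $T_h^* f(y) := f(S_h^{-1}(y))\,|\det\nabla S_h^{-1}(y)|$, we have $B(f, v) = \langle T_h^* f - f, v\rangle$, and splitting $T_h^* f - f = (f\circ S_h^{-1} - f) + (f\circ S_h^{-1})(|\det\nabla S_h^{-1}| - 1)$, the analog of Lemma \ref{lemma:translation} for $S_h^{-1}$ (Remark following Lemma \ref{lemma:translation}) together with $|\det\nabla S_h^{-1} - 1| \lesssim |h|$ from Remark \ref{rmk:Sh} give $\|T_h^* f - f\|_{L^{p'}} \lesssim |h|^\sigma \|f\|_{B^\sigma_{p', q'}}$, whence $|B(f, v)| \lesssim |h|^\sigma \|f\|_{B^\sigma_{p', q'}} \|v\|_{L^p}$. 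Bilinear real interpolation at parameter $\theta = t/\sigma \in (0, 1)$, combined with the reiteration identity \eqref{eq:interpolation_Besov}, yields \eqref{eq:regularity-f-Besov}. \emph{The main obstacle} is this second endpoint: it requires careful bookkeeping of the Jacobian produced by the change of variables, and one must verify that the Besov second indices $q$ and $q'$ come out correctly after the bilinear interpolation of real-method interpolation spaces.
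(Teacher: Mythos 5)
Your proposal follows essentially the same route as the paper: the same duality estimate for $t\in(-1,0]$ via Lemma~\ref{lemma:translation}, the same change of variables $y=S_h(x)$ transferring the localized translation to $f$ (with the Jacobian handled via Remark~\ref{rmk:Sh}) for the $t=\sigma$ endpoint, and the same bilinear interpolation between these two extremes for $t\in(0,\sigma)$. The concern you flag about tracking the second Besov indices through the bilinear interpolation is legitimate but is also glossed over in the paper's own proof, which simply invokes interpolation ``with the same $p$ and $q$ in both expressions,'' so your argument is at the same level of rigor as the original.
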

\begin{proof}
We split the proof into three steps depending on the range of $\sigma$. Let $\sigma \in (0,1]$,
  $r \ge 0$, and $v \in \dot{B}^{r+\sigma}_{p,q}(\Omega)$.

\smallskip\noindent
1. {\it Case $t\in(-1,0]$}:
We use \eqref{eq:translation-operator} to write $T_h v = v \circ S_h$ with $S_h = I + h \phi$, and recall that $\mbox{supp}(\phi) \subset D_{2\rho}(x_0)$, so that 
$v - T_h v \in \dot{B}^{r}_{p,q}(D_{2\rho}(x_0))$. 
We thus obtain
\[
\calF_1 (T_h v) - \calF_1 (v) = \langle f, T_h v - v \rangle  \le \| f \|_{B^{-r}_{p',q'}(D_{2\rho}(x_0) \cap \Omega )}  \| T_h v - v \|_{\dot{B}^{r}_{p,q}(D_{2\rho}(x_0))}.
\]
Next, we resort to \eqref{eq:localized-translation-regularity-higher} to deduce 
\[
 \| T_h v - v \|_{B^{r}_{p,q}(B_{2\rho}(x_0))} \le |h|^{\sigma}  \| v \|_{B^{r+\sigma}_{p,q}(D_{2\rho}(x_0))},
\]
which implies for all $v \in \dot{B}^{r+\sigma}_{p,q}(\Omega)$
\begin{equation} \label{eq:reg-f-Besov-1}
\left| \calF_1 (T_h v) - \calF_1 (v) \right| \lesssim |h|^\sigma \| f \|_{B^{-r}_{p',q'}(D_{2\rho}(x_0) \cap \Omega )}  \|v\|_{B^{r+\sigma}_{p,q}(D_{2\rho}(x_0))} .
\end{equation}
This establishes \eqref{eq:regularity-f-Besov} upon setting $r = -t$.

\smallskip\noindent
2. {\it Case $t=\sigma$}: Let $f \in B^{\sigma}_{p',q'}(D_{2\rho}(x_0)\cap \Omega)$ and change variables to write
\[
\int_\Omega f(x) \, v(S_h(x)) \, dx = \int_{S_h(\Omega)} f(S_h^{-1}(x)) v(x) |\det \nabla S_h^{-1} (x)| \, dx,
\]
whence,
\[
\begin{aligned}
\left| \calF_1 (T_h v) - \calF_1 (v) \right|&= 
\left| \int_{B_{2\rho}(x_0) \cap \Omega} \left[ f(S_h^{-1}(x)) |\det \nabla S_h^{-1} (x)| - f(x) \right] v(x) \, dx \right| \\
& \lesssim \| v \|_{L^p(D_{2\rho}(x_0))} \| (1-|\det \nabla S_h^{-1}|) (f\circ S_h^{-1}) \|_{L^{p'}(D_{2\rho}(x_0) \cap \Omega)} \\
& + \| v \|_{L^p(D_{2\rho}(x_0))} \|f \circ S_h^{-1} - f \| _{L^{p'}(D_{2\rho}(x_0) \cap \Omega)},
\end{aligned}
\]
because $(1-|\det \nabla S_h^{-1}|) (f\circ S_h^{-1})$ and $f \circ S_h^{-1} - f$ vanish on $D_{2\rho}(x_0)^c$. 
We use Remark \ref{rmk:Sh} (properties of $S_h$ and $S_h^{-1}$) and \Cref{lemma:translation} (moduli of continuity) to deduce
\begin{equation} \label{eq:reg-f-Besov-2}
\left| \calF_1 (T_h v) - \calF_1 (v) \right| \lesssim |h|^{\sigma} \Vert f \Vert_{B^{\sigma}_{p',q'}(D_{2\rho}(x_0) \cap \Omega)} \Vert v \Vert_{L^p(D_{2\rho}(x_0))}.
\end{equation}

\smallskip\noindent
3. {\it Case $t\in(0,\sigma)$}: 
Since the mapping $(f,v) \to \calF_1 (T_h v) - \calF_1 (v)$ is bilinear, we may interpolate between \eqref{eq:reg-f-Besov-1} with $r = 0$ and \eqref{eq:reg-f-Besov-2}, with the same $p$ and $q$ in both expressions, to infer that
\eqref{eq:regularity-f-Besov} holds as well in this case.
\end{proof}

Next, we prove the regularity of the non-linear term $\calF_G$, defined in \eqref{eq:functionals}.

\begin{proposition}[regularity of $\calF_G$] \label{prop:regularity-FG}
Let $s \in (0,1)$ and assume that $G$ satisfies Hypothesis \ref{hyp:G} for some $p \in (1,\infty)$ and $\beta \in (0,1]$.
Then, the functional $\calF_G \colon \widetilde{W}^s_p(\Omega) \to \R$ defined in \eqref{eq:functionals} is $(T,\calO_\rho(x_0),\beta)$-regular in $\widetilde{W}^s_p(\Omega)$ for all $x_0 \in \Omega$, $\rho >0$.
Namely, for all $v \in \widetilde{W}^s_p(\Omega)$ it holds that
\begin{equation} \label{eq:regularity-FG}
\sup_{h \in \calO_\rho(x_0)} \frac{\calF_G (T_h v) - \calF_G (v)}{|h|^\beta} \lesssim \iint_{Q_{B_{2\rho}(x_0)}} \frac{|v(x) - v(y)|^p}{|x-y|^{d+ sp}} dydx,
\end{equation}
where $Q_{B_{2\rho}(x_0)} := (B_{2\rho}(x_0) \times \Rd) \cup (\Rd \times B_{2\rho}(x_0))$.
\end{proposition}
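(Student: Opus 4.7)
My plan is to eliminate the composition $T_h v = v\circ S_h$ via a change of variables, localize the resulting integrand difference to $Q_{B_{2\rho}(x_0)}$, and then estimate the pointwise difference through a three-step telescoping argument that uses the properties of $G$ in Hypothesis \ref{hyp:G} together with Remark \ref{rmk:Sh}.

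Since $S_h$ is a bijection of $\Rd$, substituting $\tilde{x}=S_h(x)$, $\tilde{y}=S_h(y)$ in $\calF_G(T_hv)$ and renaming gives
\[
\calF_G(T_h v)=\iint_{\Rd\times\Rd} G\!\left(S_h^{-1}(x),S_h^{-1}(y),\tfrac{v(x)-v(y)}{|S_h^{-1}(x)-S_h^{-1}(y)|^s}\right)\tfrac{|J(x)|\,|J(y)|}{|S_h^{-1}(x)-S_h^{-1}(y)|^d}\,dxdy,
\]
with $J:=\det\nabla S_h^{-1}$. Because $\phi\equiv 0$ on $B_{2\rho}(x_0)^c$, both $S_h$ and $S_h^{-1}$ are the identity there, $J\equiv 1$, and the transformed and original integrands coincide on $B_{2\rho}(x_0)^c\times B_{2\rho}(x_0)^c$. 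Consequently the difference $\calF_G(T_h v)-\calF_G(v)$ reduces to an integral over $Q_{B_{2\rho}(x_0)}$.

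Then I would bound the pointwise integrand difference via the telescoping $A-B=(A-A_1)+(A_1-A_2)+(A_2-B)$, where $A_1$ replaces $(S_h^{-1}(x),S_h^{-1}(y))$ by $(x,y)$ in the first two slots of $G$ and $A_2$ additionally replaces the weight $|J(x)||J(y)|/|S_h^{-1}(x)-S_h^{-1}(y)|^d$ by $1/|x-y|^d$. For $A-A_1$ I use the $\beta$-H\"older continuity \eqref{eq:G-Lipschitz} with $|S_h^{-1}(x)-x|\lesssim|h|$, producing the factor $|h|^\beta$ and leaving $|\rho|^p$, which combined with the quasi-isometry \eqref{eq:quasi-isometry-Sh} yields $|h|^\beta|v(x)-v(y)|^p/|x-y|^{d+sp}$. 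For $A_1-A_2$, I factor the common $G$ and control $|G|\lesssim|\rho|^p$ by the $p$-growth \eqref{eq:p-growth}; using \eqref{eq:jacobian-Sh} and \eqref{eq:quasi-isometry-Sh}, a direct computation shows
\[
\left|\tfrac{|J(x)||J(y)|}{|S_h^{-1}(x)-S_h^{-1}(y)|^d}-\tfrac{1}{|x-y|^d}\right|\lesssim\tfrac{|h|}{|x-y|^d},
\]
and since $|h|\le\rho\le 1\le|h|^{\beta-1}$ this again gives $|h|^\beta|v(x)-v(y)|^p/|x-y|^{d+sp}$. Finally, for $A_2-B$ the mean-value theorem together with $|G_\rho(x,y,\xi)|\lesssim|\xi|^{p-1}$ gives $|G(x,y,\rho)-G(x,y,\rho')|\lesssim|\rho-\rho'|(|\rho|+|\rho'|)^{p-1}$; combined with
\[
|\rho-\rho'|\le|v(x)-v(y)|\left|\tfrac{1}{|S_h^{-1}(x)-S_h^{-1}(y)|^s}-\tfrac{1}{|x-y|^s}\right|\lesssim\tfrac{|h|\,|v(x)-v(y)|}{|x-y|^s}
\]
from \eqref{eq:quasi-isometry-Sh}, and $|\rho|+|\rho'|\lesssim|v(x)-v(y)|/|x-y|^s$, this step contributes $|h||v(x)-v(y)|^p/|x-y|^{d+sp}\lesssim|h|^\beta|v(x)-v(y)|^p/|x-y|^{d+sp}$. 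Integrating the combined pointwise bound over $Q_{B_{2\rho}(x_0)}$ yields \eqref{eq:regularity-FG}.

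The main obstacle I expect is step $A_2\to B$: one must parametrize the segment between the two values of the third argument of $G$ so that the $|\xi|^{p-1}$ bound on $G_\rho$ balances the $|v(x)-v(y)|/|x-y|^s$ factors and the quasi-isometry defect produces exactly one power of $|h|$ (which absorbs into $|h|^\beta$ using $|h|\le 1$). The remaining two telescoping steps are essentially clean applications of \eqref{eq:G-Lipschitz}, \eqref{eq:p-growth}, \eqref{eq:jacobian-Sh}, and \eqref{eq:quasi-isometry-Sh}.
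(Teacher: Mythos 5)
Your proposal is correct and follows essentially the same route as the paper: a change of variables via $S_h$ to reduce to an integral over $Q_{B_{2\rho}(x_0)}$, followed by a three-term telescoping decomposition whose pieces correspond exactly to the paper's $I$, $II$, $III$ (you change the space slots of $G$, then the weight, then the third argument; the paper changes the third argument, then the space slots, then the weight, which only permutes which intermediate terms carry $x_h,y_h$ versus $x,y$), each estimated with the same tools — space continuity, $p$-growth, the Mean Value Theorem with the $|\xi|^{p-1}$ bound on $G_\rho$, and the Jacobian/quasi-isometry estimates from Remark \ref{rmk:Sh}.
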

\begin{proof}
The change of variables $(x,y) \mapsto (S_h^{-1}(x), S_h^{-1}(y))=:(x_h,y_h)$ leads to
\[
\begin{aligned}
\calF_G(T_h v) - \calF_G(v) 
& = \iint_{\Rd \times \Rd}  \frac{G \left( x, y, \frac{T_h v(x)- T_h v(y)}{|x-y|^s} \right) - G \left( x, y, \frac{v(x)-v(y)}{|x-y|^s} \right)}{|x-y|^{d}} \, dy dx \\
&= \iint_{\Rd \times \Rd} \frac{G \left( x_h, y_h, \frac{v(x)- v(y)}{|x_h - y_h|^s} \right)}{|x_h - y_h|^{d}} \, J(x,y) dy dx \\
& \quad - \iint_{\Rd \times \Rd} G \left( x, y, \frac{v(x)-v(y)}{|x-y|^s} \right) \frac1{|x-y|^{d}} \, dy dx ,
\end{aligned}
\]
where $J(x,y) := |\det( \nabla S^{-1}_h(x)) \det(\nabla  S^{-1}_h(y))|$ for conciseness. We further split
\[
\calF_G(T_h v) - \calF_G(v) = I + II + III,
\]
with
\[ \begin{aligned}
I &:= \iint_{\Rd \times \Rd} \frac{G \left( x_h, y_h, \frac{v(x)- v(y)}{|x_h-y_h|^s} \right) - G \left( x_h, y_h, \frac{v(x)- v(y)}{|x - y|^s} \right)}{|x_h - y_h|^{d}} \, J(x,y) \, dy dx , \\
II &:= \iint_{\Rd \times \Rd} \frac{G \left( x_h, y_h, \frac{v(x)- v(y)}{|x - y|^s} \right)-G \left( x, y, \frac{v(x)- v(y)}{|x - y|^s} \right)}{|x_h - y_h|^{d}} \, J(x,y) \, dy dx, \\
III &:= \iint_{\Rd \times \Rd} G \left( x, y, \frac{v(x)- v(y)}{|x - y|^s} \right) \left(\frac{J(x,y)}{|x_h-y_h|^{d}} - \frac1{|x - y|^{d}} \right) dy dx.
\end{aligned} \]
We observe that, because $S_h$ is a one-to-one mapping on $\Rd$ that coincides with the identity over $B_{2\rho}(x_0)^c$, all the integrals above need to be computed on the set $Q_{B_{2\rho}(x_0)} = (B_{2\rho}(x_0) \times \Rd) \cup (\Rd \times B_{2\rho}(x_0))$.

Applying the Mean Value Theorem and using \eqref{eq:jacobian-Sh}, \eqref{eq:quasi-isometry-Sh} and the growth condition \eqref{eq:p-growth}, it follows that
\begin{equation} \label{eq:bound-I}
\left| I \right|\lesssim |h| \iint_{Q_{B_{2\rho}(x_0)}} \frac{|v(x) - v(y)|^{p}}{|x-y|^{d+sp}} dydx .
\end{equation}
Next, we resort to the $\beta$-H\"older continuity of $G$ with respect to the space variables \eqref{eq:G-Lipschitz}, use \eqref{eq:easy-Sh} and \eqref{eq:jacobian-Sh}, and obtain
\begin{equation} \label{eq:bound-II}
\begin{aligned}
 \left| II \right| & \lesssim  \iint_{Q_{B_{2\rho}(x_0)}} \left(|x_h - x|^{\beta} + |y_h-y|^\beta \right) \frac{|v(x) - v(y)|^p}{|x-y|^{d+sp}} dydx \\
&  \lesssim |h|^{\beta} \iint_{Q_{B_{2\rho}(x_0)}} \frac{|v(x) - v(y)|^p}{|x-y|^{d+ sp}} dydx .
\end{aligned}
\end{equation}
Finally, we use \eqref{eq:jacobian-Sh} and \eqref{eq:quasi-isometry-Sh} to derive
\[
\left|\frac{J(x,y)}{|S^{-1}_h(x) - S^{-1}_h(y)|^{d}} - \frac1{|x - y|^{d}} \right| \le \frac{|h|}{|x - y|^{d}}. 
\]
Therefore, combining this bound with condition \eqref{eq:p-growth}, we obtain
\begin{equation} \label{eq:bound-III}
\begin{aligned}
| III |& &  \lesssim |h| \iint_{Q_{B_{2\rho}(x_0)}} \frac{|v(x) - v(y)|^p}{|x-y|^{d+ sp}} dydx .
\end{aligned}
\end{equation}
Collecting \eqref{eq:bound-I}, \eqref{eq:bound-II}, and \eqref{eq:bound-III}, we deduce
\[
|\calF_G(T_h v) - \calF_G(v)| \lesssim |h|^\beta \iint_{Q_{B_{2\rho}(x_0)}} \frac{|v(x) - v(y)|^p}{|x-y|^{d+ sp}} dydx
\]
and conclude the proof.
\end{proof}

\begin{remark}[translation operator] \label{rmk:translation-operator}
The estimate \eqref{eq:regularity-FG} depends crucially on the structure of the translation operator \eqref{eq:translation-operator}. Indeed, localizing through a composition enabled us to perform a simple  argument based on change of variables. In contrast, if one aims to exploit the convexity of the functional $\calF_G$ by using a translation operator of the form
$\wt{T}_h = \phi v_h + (1-\phi) v $
for a suitable cutoff $\phi$, as in \cite{Savare98,BoNo21}, then one obtains a less accurate estimate where the right-hand side involves higher-order norms of $u$ \cite[Proposition 3.2]{BoNo21}.
\end{remark}

\subsection{Regularity of solutions}\label{S:regularity-solutions}
We are now in position to prove the regularity of minimizers of the energy \eqref{eq:def-energy} under Hypothesis \ref{hyp:G}. We first prove estimates in the Besov scale, and afterwards derive estimates in Sobolev norms by using embeddings. Finally, we show how to accommodate the theory to include finite-horizon operators.

\begin{theorem}[maximal Besov regularity] \label{thm:max-regularity}
Let $\Omega$ be a bounded Lipschitz domain, $s \in (0,1)$, $G \colon \R^d \times \R^d \times \R \to (0, \infty)$ satisfy Hypothesis \ref{hyp:G}, with $p \in (1,\infty)$, $\beta \in (0,1]$, and let $p' = \frac{p}{p-1}$.

If $p \ge 2$ and $f \in B^{-s+\frac{\beta}{p'}}_{p',1}(\Omega)$, then $u \in \dot B^{s+\frac{\beta}{p}}_{p,\infty}(\Omega)$ and
\begin{equation} \label{eq:max-reg-p-big}
\| u \|_{\dot B^{s+\frac{\beta}{p}}_{p,\infty}(\Omega)} \lesssim \| f \|_{B^{-s+\frac{\beta}{p'}}_{p',1}(\Omega)}^{\frac1{p-1}}.
\end{equation}

If $p < 2$ and $f \in B^{-s+\frac{\beta}{2}}_{p',1}(\Omega)$, then $u \in \dot B^{s + \frac{\beta}{2}}_{p,\infty}(\Omega)$ and
\begin{equation} \label{eq:max-reg-p-small}
\| u \|_{\dot B^{s + \frac{\beta}{2}}_{p,\infty}(\Omega)} \lesssim \|f\|_{W^{-s}_{p'}(\Omega)}^{\frac{2-p}{p-1}} \| f \|_{B^{-s+\frac{\beta}{2}}_{p',1}(\Omega)}.
\end{equation}
\end{theorem}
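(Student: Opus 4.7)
The plan is to convert the $(T,\calO_\rho(x_0),\beta)$-regularity of the energy $\calF = \calF_G - \calF_1$ provided by Propositions \ref{prop:regularity-FG} and \ref{prop:regularity-f} (combined with the subadditivity \eqref{eq:subadditivity}) into a local Besov bound on each ball of a finite covering of $\overline\Omega$, and then glue them via the localization Lemma \ref{lem:localization}. Fix $x_0 \in \overline\Omega$ and take $\rho$ small enough so that Proposition \ref{prop:cone-property} supplies a cone $D = \calC_\rho(\vn(x_0),\theta) \subset \calO_\rho(x_0)$; consider the localized translation $T_h$ in \eqref{eq:translation-operator} with center $x_0$ and radius $\rho$. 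Applying Proposition \ref{prop:regularity-FG} (with $\sigma=\beta$) and Proposition \ref{prop:regularity-f} (with $\sigma=\beta$, $q=\infty$, and $t$ yet to be chosen) yields
\[
\omega(u;\calF,T,D,\beta) \lesssim |u|^p_{W^s_p(\Rd)} + \|f\|_{B^{t}_{p',1}(\Omega)} \,\|u\|_{B^{\beta-t}_{p,\infty}(D_{2\rho}(x_0))}.
\]
Feeding this into Lemma \ref{lem:corollary1} and using that $T_h u = u_h$ on $D_\rho(x_0)$ (where $\phi\equiv 1$), the reiteration formula in Proposition \ref{prop:bound-Besov} together with the cone/ball equivalence in Proposition \ref{P:seminorms} converts the bound for $\|u-T_h u\|_{\wt W^s_p}$ into a local Besov estimate for $u$ on $D_\rho(x_0)$, at differentiability order $s+\beta/p$ when $p \ge 2$ and $s+\beta/2$ when $p < 2$.

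For $p \ge 2$ I would select $t = -s + \beta/p'$ so that $\beta - t = s + \beta/p$. Summing the local bounds over a finite covering of $\overline\Omega$ via Lemma \ref{lem:localization} gives
\[
\|u\|^p_{\dot B^{s+\beta/p}_{p,\infty}(\Omega)} \lesssim |u|^p_{W^s_p(\Rd)} + \|f\|_{B^{-s+\beta/p'}_{p',1}(\Omega)} \,\|u\|_{\dot B^{s+\beta/p}_{p,\infty}(\Omega)}.
\]
A Young's inequality $ab \le \eps a^p + C_\eps b^{p'}$ with $p' = p/(p-1)$ absorbs the Besov factor of $u$ on the right, while the stability \eqref{eq:stability} and the embedding $B^{-s+\beta/p'}_{p',1}(\Omega) \hookrightarrow W^{-s}_{p'}(\Omega)$ control $|u|^p_{W^s_p}$ by $\|f\|^{p/(p-1)}_{B^{-s+\beta/p'}_{p',1}}$, producing \eqref{eq:max-reg-p-big}. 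For $p < 2$, Lemma \ref{lem:corollary1} introduces the extra multiplicative factor $(\|u\|_{\wt W^s_p} + \|T_h u\|_{\wt W^s_p})^{2-p} \lesssim \|u\|^{2-p}_{\wt W^s_p(\Omega)}$, and $1/p$ is replaced by $1/2$ in the reiteration step. Choosing $t = -s + \beta/2$ and arguing analogously one obtains
\[
\|u\|^2_{\dot B^{s+\beta/2}_{p,\infty}(\Omega)} \lesssim \|u\|_{\wt W^s_p(\Omega)}^{2-p}\Bigl( |u|^p_{W^s_p(\Rd)} + \|f\|_{B^{-s+\beta/2}_{p',1}(\Omega)} \|u\|_{\dot B^{s+\beta/2}_{p,\infty}(\Omega)}\Bigr),
\]
which after a Young's inequality with exponents $(2,2)$, the stability \eqref{eq:stability}, and the embedding $B^{-s+\beta/2}_{p',1} \hookrightarrow W^{-s}_{p'}$, rearranges into \eqref{eq:max-reg-p-small} (one distributes $2/(p-1) = 2(2-p)/(p-1) + 2$ between the two $f$-norms).

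The hardest part will be the bookkeeping of localization: the right-hand sides in Propositions \ref{prop:regularity-f} and \ref{prop:regularity-FG} live on the \emph{enlarged} balls $D_{2\rho}(x_0)$, so the covering used in Lemma \ref{lem:localization} must have bounded finite overlap for the summation step to preserve estimates. Near the boundary one is obliged to work with cone-shaped direction sets rather than balls, and Proposition \ref{P:seminorms} affords the cone/ball equivalence only for the second parameter $q = \infty$; this is precisely why the maximal Besov regularity naturally sits in a space with second index $\infty$. A secondary subtlety, specific to the singular regime $p<2$, is that the nonlinear coupling $\|u\|^{2-p}_{\wt W^s_p}$ forces two distinct uses of Young's inequality and a careful application of the embedding $B^{-s+\beta/2}_{p',1} \hookrightarrow W^{-s}_{p'}$ to bring the exponents of $\|f\|_{W^{-s}_{p'}}$ and $\|f\|_{B^{-s+\beta/2}_{p',1}}$ into the balance displayed in \eqref{eq:max-reg-p-small}.
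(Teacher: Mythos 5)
Your outline captures the paper's scaffolding (regularity of $\calF_G$ and $\calF_1$, the localized translation $T_h$, reiteration via Proposition~\ref{prop:bound-Besov}, and localization via Lemma~\ref{lem:localization}), but the core step — closing the estimate in a single pass with $\sigma=\beta$ and a Young absorption — has a genuine gap. With $\sigma=\beta$ and $t=-s+\beta/p'$, the Besov seminorm of $u$ appearing on the right-hand side is of the \emph{same} order $s+\beta/p$ as the quantity you are trying to bound. To kick back $\eps\|u\|^p_{\dot B^{s+\beta/p}_{p,\infty}(\Omega)}$ you must know a priori that this norm is finite, which is exactly the conclusion of the theorem; if it is $+\infty$, your displayed inequality is vacuous and proves nothing. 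The identical obstruction occurs in your $p<2$ branch with $\|u\|^2_{\dot B^{s+\beta/2}_{p,\infty}(\Omega)}$.

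The paper circumvents this by working with $\sigma\in(0,\beta)$: then the right-hand side carries the \emph{strictly lower-order} norm $\|u\|_{\dot B^{\sigma+s-\beta/p'}_{p,\infty}(\Omega)}$, since $\sigma+s-\beta/p' < s+\sigma/p$ whenever $\sigma<\beta$. One sets up the recursion $\sigma_{k+1}=\sigma_k/p+\beta/p'$ (resp. $\sigma_{k+1}=(\beta+\sigma_k)/2$ for $p<2$), with $\sigma_0=0$ and $\sigma_k\nearrow\beta$, and proves the claimed bound by induction on $k$, starting from the stability estimate $u\in\wt W^s_p(\Omega)\subset\dot B^s_{p,\infty}(\Omega)$. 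A second point you omit is that the intermediate constants $\Lambda_k$ satisfying $\Lambda_{k+1}=(C_1+C_2\Lambda_k)^{1/p}$ must be shown to be uniformly bounded (this is where Young's inequality actually enters the paper's argument, in the elementary form $C_2\Lambda\le C_2^{p'}/p'+\Lambda^p/p$, to verify the fixed point $\Lambda=\max\{\Lambda_0,(p'C_1+C_2^{p'})^{1/p}\}$ works). Your exponent bookkeeping in the $p<2$ branch is arithmetically consistent with \eqref{eq:max-reg-p-small} once the spurious extra term is dominated via the embedding $B^{-s+\beta/2}_{p',1}\subset W^{-s}_{p'}$, but the bootstrapping iteration is the indispensable ingredient that your one-shot absorption replaces, and without it the proof does not close.
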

\begin{proof}
Let $\gamma = \frac{\beta}{p'}$ if $p \ge 2$ and $\gamma = \frac{\beta}2$ if $p < 2$. We first observe that problem \eqref{eq:Dirichlet} is well-posed, because $f \in B^{-s+\gamma}_{p',1}(\Omega) \subset W^{-s}_{p'}(\Omega)$ and, according to \eqref{eq:stability}, we have
\begin{equation} \label{eq:stability-Wsp}
\|u\|_{\widetilde{W}^s_p(\Omega)} \lesssim \|f\|_{W^{-s}_{p'}(\Omega)}^{\frac1{p-1}} \lesssim \|f\|_{B^{-s+\gamma}_{p',1}(\Omega)}^{\frac{p'}{p}}.
\end{equation}
We split the proof into three steps.

\smallskip\noindent
1. Because $\Omega$ is a Lipschitz domain, by Proposition \ref{prop:cone-property} (uniform cone property) there exist $\rho$, $\phii$ such that $\calC_\rho (\vn(x), \phii) \subset \calO_\rho(x)$ for all $x \in \Rd$. We consider a finite covering of $\Omega^\rho$ by balls $D_j = D(x_j, \rho)$ of radius $\rho$, $j = 1, \ldots , M$. By the localization estimate \eqref{eq:localization2}, it suffices to bound Besov seminorms over each of the balls $D_j$.

We consider one of the balls $D_j$ in the covering and set $\calC_j = \calC_\rho (\vn(x_j), \phii)$. Importantly, if $h \in \calC_j$ we can guarantee that $T_h u \in \widetilde{W}^s_p(\Omega)$.

Let $\sigma \in (0,\beta]$ and $t \in (-1,\sigma)$; our proof will use suitable bounds on the $(T,\calC_j,\sigma)$-regularity modulus of $\calF$. We exploit the subadditivity of this functional, and Propositions \ref{prop:regularity-f} (regularity of $\calF_1$) with $q=\infty$ and \ref{prop:regularity-FG} (regularity of $\calF_G)$): 
\begin{equation} \label{eq:regularity-continuity-modulus}
\begin{aligned}
& \omega(u; \calF, T, \calC_j, \sigma) \le \omega(u; \calF_G, T, \calC_j, \sigma) + \omega(u; \calF_1, T, \calC_j, \sigma) \\
& \lesssim  \iint_{Q_{B_{2\rho}(x_j)}} \frac{|u(x) - u(y)|^p}{|x-y|^{d+ sp}} dydx + \| f \|_{B^{t}_{p',1}(D_{2\rho}(x_j)\cap\Omega)}  \| u \|_{B^{\sigma-t}_{p,\infty}(D_{2\rho}(x_j))} .
\end{aligned} 
\end{equation}
In view of \eqref{eq:reg-steps} and \eqref{eq:reg-steps-2}, using \eqref{eq:stability-Wsp} we deduce
\begin{equation}\label{eq:first-bound}
   |u|_{B^{s+\sigma/q}_{p,\infty} (D_j)} \lesssim
    \|f\|_{W^{-s}_{p'}(\Omega)}^{\frac{q-p}{p-1}} \omega(u;\calF,T,\calC_j,\sigma)
\end{equation}
provided $q:=\max\{2,p\}$. We next distinguish between $p\in[2,\infty)$ and $p\in(1,2)$, and employ
  \eqref{eq:regularity-continuity-modulus} and \eqref{eq:first-bound} with $t=-s+\beta/q'$.

\smallskip\noindent
2. {\it Case $p \in [2,\infty)$}. Upon choosing $q=p$ in \eqref{eq:regularity-continuity-modulus}, we deduce
\[
| u |^p_{B^{s+\sigma/p}_{p,\infty}(D_j)} \lesssim  \iint_{Q_{B_{2\rho}(x_j)}} \frac{|u(x) - u(y)|^p}{|x-y|^{d+ sp}} dydx + \| f \|_{B^{t}_{p',1}(D_{2\rho}(x_j)\cap\Omega)}  \| u \|_{B^{\sigma-t}_{p,\infty}(D_{2\rho}(x_j))} .
\]
Adding over $j$ for $1\le J$, and using \Cref{lem:localization} (localization), specifically \eqref{eq:localization1} for the right-hand side and \eqref{eq:localization2} for the left-hand side, we obtain
\[
\| u \|^p_{\dot{B}^{s+\sigma/p}_{p,\infty}(\Omega)} \lesssim  \|u\|_{\widetilde{W}^{s}_{p}(\Omega)}^p  + \| f \|_{B^{t}_{p',1}(\Omega)} \| u \|_{\dot B^{\sigma-t}_{p,\infty}(\Omega)}
\]
for all $\sigma \in (0,\beta],\ t \in (-1,\sigma)$,
where the hidden constant depends on $J$. We now replace the first term on the right-hand side via the 
stability bound \eqref{eq:stability-Wsp}. This suggests the choices $t = -s + \frac{\beta}{p'} < \sigma\le\beta$
and yields the bound
\begin{equation} \label{eq:first-bound-p-big}
\| u \|^p_{\dot{B}^{s+\sigma/p}_{p,\infty}(\Omega)} \le C_1   \|f\|_{B^{-s+\beta/p'}_{p',1}(\Omega)}^{p'} + C_2 \| f \|_{B^{-s+\beta/p'}_{p',1}(\Omega)}  \| u \|_{\dot B^{\sigma+s-\beta/p'}_{p,\infty}(\Omega)}
\end{equation}
for suitable constants $C_1, C_2>0$ depending on $d,s,p,\Omega,J$. We observe that the differentiability index
$\sigma+s-\beta/p'$ of $u$ on the right-hand side gives a larger index $s+\sigma/p$ on the left-hand side
provided $\sigma\le\beta$. We thus view \eqref{eq:first-bound-p-big} as a recursion relation and set
$\sigma_0=0$, and for $k\ge0$ let $\sigma=\sigma_{k+1}\in(0,\beta]$ and
\[
s+\frac{\sigma_k}{p} = \sigma + s - \frac{\beta}{p'} \quad\Rightarrow\quad
\sigma_{k+1} := \frac{\sigma_k}p + \frac{\beta}{p'}.
\]
This implies $\sigma_{k} = \beta \left( 1 - \frac1{p^{k}} \right)$ and shows that
$\sigma_k\in(0,\beta)$ and $\sigma_k-t=\sigma_k+s-\beta/p'\ge s>0$ for all $k \ge 1$, as desired.
We claim that 
\begin{equation} \label{eq:iteration-claim-pge2}
\| u \|_{\dot{B}^{s+\sigma_k/p}_{p,\infty}(\Omega)} \le \Lambda_k \|f\|_{B^{-s+\frac{\beta}{p'}}_{p',1}(\Omega)}^{\frac{1}{p-1}}
\end{equation}
for some uniformly bounded constants $\Lambda_k$ for $k\ge0$. We argue by induction.
We first note that \eqref{eq:stability-Wsp} and the continuity of the embedding $\widetilde{W}^s_p(\Omega)\subset\dot{B}^{s}_{p,\infty}(\Omega)$ yield
\[
\| u \|_{\dot{B}^{s}_{p,\infty}(\Omega)} \le \Lambda_0 \|f\|_{B^{-s+\frac{\beta}{p'}}_{p',1}(\Omega)}^{\frac{1}{p-1}},
\]
for some $\Lambda_0 := \Lambda_0(d, s, p, \Omega)$; hence \eqref{eq:iteration-claim-pge2} is true for $k=0$.
We next set $\sigma = \sigma_{k+1}$ in \eqref{eq:first-bound-p-big} to arrive at
\[ \begin{aligned}
\| u \|^p_{\dot{B}^{s+\sigma_{k+1}/p}_{p,\infty}(\Omega)} & \le \left(  C_1  \|f\|_{B^{-s+\beta/p'}_{p',1}(\Omega)}^{\frac1{p-1}} + C_2 \| u \|_{\dot B^{s+\sigma_k/p}_{p,\infty}(\Omega)} \right) \| f \|_{B^{-s+\beta/p'}_{p',1}(\Omega)}  \\
& \le   \left(  C_1 + C_2 \Lambda_k \right)  \|f\|_{B^{-s+\beta/p'}_{p',1}(\Omega)}^{p'}.
\end{aligned}
\]
This shows that \eqref{eq:iteration-claim-pge2} holds for $\Lambda_{k+1} := (C_1 + C_2 \Lambda_k)^{1/p}$; it remains to prove that such a sequence is bounded. Let $\Lambda := \max \{ \Lambda_0, (p'C_1 + C_2^{p'})^{1/p} \}$. We obviously have $\Lambda_0 \le \Lambda$ and, if $\Lambda_k \le \Lambda$, applying Young's inequality we obtain
\[ \begin{split}
\Lambda_{k+1}^p & = C_1 + C_2 \Lambda_k \le C_1 + C_2 \Lambda \le C_1 + \frac{C_2^{p'}}{p'} + \frac{\Lambda^{p}}{p} \\
&  \le \frac1{p'}  \left( p' C_1 + C_2^{p'}  \right) + \frac{\Lambda^p}{p} \le \Lambda^p.
\end{split}\]
Thus, replacing $\Lambda_k$ by $\Lambda$ and letting  $k \to \infty$, we have $\sigma_k \to \beta$ and deduce the desired estimate \eqref{eq:max-reg-p-big} for $p\in[2,\infty)$.

\smallskip\noindent
3. {\it Case $p\in(1,2)$}. We choose $q=2$ in \eqref{eq:first-bound} and $t=-s+\beta/2$ in \eqref{eq:regularity-continuity-modulus}. After squaring, summing up over $j$ for $1\le j \le J$, and \Cref{lem:localization} (localization), we end up with
\[
\| u \|_{\dot{B}^{s+\sigma/2}_{p,\infty}(\Omega)}^2 \lesssim
\| f \|_{W^{-s}_{p'}(\Omega)}^{\frac{2-p}{p-1}} \left(|u|_{\widetilde{W}^{s}_{p}(\Omega)}^p + \| f \|_{B^{-s+\beta/2}_{p',1}(\Omega)}  \| u \|_{\dot B^{\sigma+s-\beta/2}_{p,\infty}(\Omega)} \right) 
\]
for all  $\sigma \in (0,\beta]$ such that $\sigma+s-\beta/2>0$; the hidden constant depends on $J$.
We next resort to \eqref{eq:stability-Wsp} to obtain the following counterpart of \eqref{eq:first-bound-p-big}:
\begin{equation} \label{eq:first-bound-p-small}
\| u \|_{\dot{B}^{s+\sigma/2}_{p,\infty}(\Omega)}^2 \le
\left( C_1 \| f \|_{W^{-s}_{p'}(\Omega)}^{p'} + C_2 \| f \|_{B^{-s+\beta/2}_{p',1}(\Omega)}  \| u \|_{\dot B^{\sigma+s-\beta/2}_{p,\infty}(\Omega)} \right)
\| f \|_{W^{-s}_{p'}(\Omega)}^{\frac{2-p}{p-1}}
\end{equation}
for all $\sigma \in (0,\beta]$ and with constants $C_1$ and $C_2$ that do not depend on $u$ or $f$. 

We now proceed as in Step 2 to exploit the improvement of the differentiability index from the right-hand side
to the left one in \eqref{eq:first-bound-p-small}. To this end, we set $\sigma_0=0, \sigma_{k+1} = \sigma\in (0,\beta]$ and rewrite
\[
\sigma + s-\frac{\beta}{2} = s + \frac{\sigma_k}{2}
\quad\Rightarrow\quad
\sigma_{k+1} = \frac{\beta+\sigma_k}{2} \quad\forall \, k\ge 0.
\]
This yields $\sigma_k = \beta(1 - 2^{-k})$, which satisfies the restrictions $\sigma_k \in (0,\beta)$,
$\sigma_k-t = \sigma_k + s-\frac{\beta}{2} \ge s>0$ for all $k\ge1$, and $\sigma_k\to\beta$.
We prove by induction that
\begin{equation} \label{eq:iteration-ple2-temp}
		\| u \|_{\dot{B}^{s+\frac{\sigma_k}2}_{p,\infty}(\Omega)} \le 
		\Lambda_{k} \| f \|_{W^{-s}_{p'}(\Omega)}^{\frac{2-p}{p-1} + 2^{-k}} 
		\| f \|_{B^{-s+\frac{\beta}{2}}_{p',1}(\Omega)}^{1 - 2^{-k}} \quad \forall k \ge 0, 
\end{equation}
with a uniformly bounded constant $\Lambda_k$. We first note that, according to \eqref{eq:stability-Wsp} and the continuity of the embedding $\widetilde{W}^s_p(\Omega)\subset\dot{B}^{s}_{p,\infty}(\Omega)$, we have
	\[
	\| u \|_{\dot{B}^{s}_{p,\infty}(\Omega)} \le \Lambda_0 \|f\|_{W^{-s}_{p'}(\Omega)}^{\frac{1}{p-1}}
	\]
	for some $\Lambda_0 := \Lambda_0(d, s, p, \Omega)$, whence \eqref{eq:iteration-ple2-temp} is true for $k=0$.
 	Suppose that \eqref{eq:iteration-ple2-temp} holds for some $k$, and set $\sigma = \sigma_{k+1}$ in \eqref{eq:first-bound-p-small} to obtain 
	\[ \begin{split}
		\| u \|_{\dot{B}^{s+\sigma_{k+1}/2}_{p,\infty}(\Omega)}^2 & \le
		\left( C_1 \| f \|_{W^{-s}_{p'}(\Omega)}^{p'} + C_2 \| f \|_{B^{-s+\beta/2}_{p',1}(\Omega)}  \| u \|_{\dot B^{s+\sigma_{k}/2}_{p,\infty}(\Omega)} \right) \| f \|_{W^{-s}_{p'}(\Omega)}^{\frac{2-p}{p-1}} \\
		& \le \left( C_1 \| f \|_{W^{-s}_{p'}(\Omega)}^{2 - 2^{-k}} + C_2 \Lambda_k \| f \|_{B^{-s+\beta/2}_{p',1}(\Omega)}^{2 - 2^{-k}}  \right)
		\| f \|_{W^{-s}_{p'}(\Omega)}^{\frac{2(2-p)}{p-1} + 2^{-k}} \\
		& \le \left( C_1 C_3^{2 - 2^{-k}} + C_2 \Lambda_k \right) 
		\| f \|_{B^{-s+\beta/2}_{p',1}(\Omega)}^{2 - 2^{-k}}  
		\| f \|_{W^{-s}_{p'}(\Omega)}^{\frac{2(2-p)}{p-1} + 2^{-k}},
	\end{split} \]
	where $C_3 := C_3(\Omega, d, s, p, \beta)$ is the constant of
	the continuous embedding $W^{-s}_{p'}(\Omega) \subset B^{-s+\beta/2}_{p',1}(\Omega)$.
	Since $1 \le 2 - 2^{-k} \le 2$, we have $C_3^{2 - 2^{-k}} \le \max\{C_3, C_3^2\} =: C_4$ which gives rise to \eqref{eq:iteration-ple2-temp} with constant
	\[
	\Lambda_{k+1} := \left(C_1 C_4 + C_2 \Lambda_k \right)^{1/2}.
	\]
	It only remains to show that $\Lambda_{k} \le \Lambda$ for some $\Lambda > 0$ and all $k \ge 0$. Let $\Lambda := \max\{ \Lambda_0, (2C_1 C_4 + C_2^2)^{1/2} \}$, then the same argument as in Step 2 is also valid in this setting.
	Finally, estimate \eqref{eq:max-reg-p-small} follows by letting $k \to \infty$.
\end{proof}

Theorem \ref{thm:max-regularity} gives the maximal Besov regularity one can expect via our variational
approach for solutions to problem \eqref{eq:Dirichlet}. We recall that \eqref{eq:max-reg-p-big} is optimal while
\eqref{eq:max-reg-p-small} is suboptimal for the 1$d$ profile $(1-x^2)_+^s$ in $\Omega=(-1,1)$. This is
due to the Hypothesis \ref{hyp:G} (monotonicity) for $p\in(1,2)$, which cannot be improved for a function
$\rho\mapsto G(\cdot,\cdot,\rho)$ convex and differentiable \cite[Remark 2.1]{Savare98}.

One can immediately deduce Sobolev regularity by combining \Cref{thm:max-regularity} with Lemma \ref{lemma:Besov-Sobolev-emb} (embedding). Even though such a lemma is stated for the spaces $B^{\sigma+\eps}_{p,\infty}(\Omega)$ and $W^{\sigma}_p(\Omega)$, the proof for the zero-extension spaces $\dot B^{\sigma+\eps}_{p,\infty}(\Omega)$ and $\wt W^{\sigma}_p(\Omega)$ follows by the same arguments.

\begin{corollary}[maximal Sobolev regularity] \label{C:Sobolev-delta}
  Let the assumptions of \Cref{thm:max-regularity} be satisfies. For $p\in[2,\infty)$ and
  $f \in B^{-s+\beta/p'}_{p',1}(\Omega)$ then
\begin{equation}\label{eq:sob-regularity-pLap-p>2}
  \| u \|_{\wt{W}^{s+\frac{\beta}{p} - \eps}_p(\Omega)} \lesssim \eps^{-\frac{1}{p}} \| f \|_{B^{-s+\frac{\beta}{p'}}_{p',1}(\Omega)}^{\frac1{p-1}}
\end{equation}
is valid provided $\eps \in (0, s + \beta/p)$. If $p\in(1,2)$ and $f \in B^{-s+\beta/2}_{p',1}(\Omega)$ then
\begin{equation} \label{eq:sob-regularity-pLap-p<2}
  \| u \|_{\wt{W}^{s+\frac{\beta}{2} - \eps}_p(\Omega)} \lesssim \eps^{-\frac{1}{p}} \|f\|_{W^{-s}_{p'}(\Omega)}^{\frac{2-p}{p-1}} \| f \|_{B^{-s+\frac{\beta}{2}}_{p',1}(\Omega)}
\end{equation}
holds provided $\eps \in (0, s + \beta/2)$.
\end{corollary}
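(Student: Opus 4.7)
The proof is a two-step reduction, entirely built on results already in the excerpt. First I would invoke \Cref{thm:max-regularity} with the hypothesized $f$: for $p\in[2,\infty)$ this gives $u\in\dot B^{s+\beta/p}_{p,\infty}(\Omega)$ with the bound \eqref{eq:max-reg-p-big}, while for $p\in(1,2)$ it yields $u\in\dot B^{s+\beta/2}_{p,\infty}(\Omega)$ with \eqref{eq:max-reg-p-small}. The remaining task is purely a soft embedding from $\dot B^{\sigma+\eps}_{p,\infty}(\Omega)$ into $\wt W^\sigma_p(\Omega)$, carrying a constant of order $\eps^{-1/p}$.

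Second, I would apply the zero-extension analogue of \Cref{lemma:Besov-Sobolev-emb} with $\sigma := s+\beta/p-\eps$ in the case $p\ge 2$, and with $\sigma := s+\beta/2-\eps$ in the case $p<2$. Since $s\in(0,1)$ and $\beta\in(0,1]$, the requirement $\sigma\in(0,2)$ (away from the endpoints) is automatic once $\eps<s+\beta/p$ (respectively $\eps<s+\beta/2$); in particular $\sigma(2-\sigma)$ stays bounded by a constant depending only on $s$, $\beta$, $p$, so the embedding factor $\bigl(\sigma(2-\sigma)/\eps\bigr)^{1/p}$ collapses to $\eps^{-1/p}$ up to constants. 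Substituting the Besov bound from step one directly produces \eqref{eq:sob-regularity-pLap-p>2} and \eqref{eq:sob-regularity-pLap-p<2}.

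The only non-routine point, as flagged immediately before the corollary statement, is justifying that \Cref{lemma:Besov-Sobolev-emb} transfers from $\bigl(L^p(\Omega),W^2_p(\Omega)\bigr)$ to the zero-extension pair $\bigl(L^p(\Omega)\cap\{\supp\subset\overline\Omega\},W^2_p(\Omega)\cap\{\supp\subset\overline\Omega\}\bigr)$. I would simply rerun the proof verbatim: the trivial decomposition $v=v+0$ still gives $|K(t,v)|\le\|v\|_{L^p(\Omega)}\le\|v\|_{\wt W^\sigma_p(\Omega)}$, both norms coincide with the restrictions of the $\R^d$-norms under zero extension, and splitting the integral defining $\|v\|_{\wt W^\sigma_p(\Omega)}^p$ at $t=N$, bounding the head by $\|v\|_{\dot B^{\sigma+\eps}_{p,\infty}(\Omega)}$ and the tail by the trivial decomposition, then choosing $N$ large enough to kick the tail back to the left-hand side, yields the same quantitative embedding. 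Since every ingredient already appears verbatim in the proof of \Cref{lemma:Besov-Sobolev-emb}, I do not expect any genuine obstacle; the corollary is then just an algebraic substitution.
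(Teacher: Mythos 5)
Your proposal matches the paper's own argument exactly: the corollary is deduced by first applying \Cref{thm:max-regularity} to obtain the Besov bounds, then invoking the embedding of \Cref{lemma:Besov-Sobolev-emb} with $\sigma = s+\beta/p-\eps$ (resp.\ $\sigma = s+\beta/2-\eps$), and noting that the lemma carries over to the zero-extension pair $\dot B^{\sigma+\eps}_{p,\infty}(\Omega)$, $\wt W^\sigma_p(\Omega)$ by rerunning its proof verbatim. Your elaboration of the zero-extension transfer and the observation that $\sigma(2-\sigma)$ stays uniformly bounded so that the embedding constant reduces to $\eps^{-1/p}$ are precisely the steps the paper leaves implicit in its one-line remark before the corollary.
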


\begin{remark}[interior Sobolev regularity]
Estimates \eqref{eq:sob-regularity-pLap-p>2} and \eqref{eq:sob-regularity-pLap-p<2} are valid up to the boundary of the domain. The obstruction to higher regularity is due to boundary behavior. In the superquadratic case $p\ge 2$, and for the $(p,s)$-fractional Laplacian \eqref{eq:def-pLap} --that gives rise to $\beta = 1$ in Hypothesis \ref{hyp:G}-- reference \cite{MR3558212} derives the following higher-order interior regularity provided $f \in W^s_{p'}(\Omega)$:
\[ \begin{split}
\mbox{if } s \le \frac{p-1}{p+1}, & \quad \mbox{then } u \in \bigcap_{\eps >0} W^{s \frac{p+1}{p-1} - \eps}_{p, loc}(\Omega), \\
\mbox{if } s > \frac{p-1}{p+1}, & \quad \mbox{then } u \in W^1_{p,loc}(\Omega) \mbox{ and } \nabla u \in \bigcap_{\eps >0} W^{s \frac{p+1}{p} - \frac{p-1}{p} - \eps}_{p, loc}(\Omega).
\end{split}\]
We point out that if $s < \frac12 - \frac1{2p}$, estimate \eqref{eq:sob-regularity-pLap-p>2} is actually stronger than the first statement above.
\end{remark}

The technique in Theorem \ref{thm:max-regularity} also allows one to derive regularity estimates in Besov spaces with a lower-order differentiability index whenever $f$ is less regular than in such a theorem. We shall not consider this procedure, but rather prove a lower regularity pickup by using interpolation theory. For that purpose, we need the following nonlinear interpolation estimate (cf. \cite[Th\'eor\`eme I.1]{Tartar}).

\begin{proposition}[nonlinear interpolation] \label{prop:nonlinear-interpolation}
Let $A_0 \subset A_1$, $B_0 \subset B_1$ be Banach spaces, $U \subset A_1$ a nonempty open set and $T \colon U \to B_1$ be a a function that maps $A_0 \cap U$ into $B_0$. Let us assume that there exist constants $c_0$, $c_1$ such that
\[ \begin{split}
& \| T f \|_{B_0} \le c_0 \| f \|_{A_0}^{\alpha_0} , \quad \forall f \in A_0 \cap U, \\
& \| T f - Tg \|_{B_1} \le c_1 \| f -g \|_{A_1}^{\alpha_1} , \quad \forall f,g \in U
\end{split} \]
for some $\alpha_0 > 0$, $\alpha_1 \in (0,1]$. Then, if $\theta \in (0,1)$ and $q \in [1,\infty]$, $T$ maps $(A_0, A_1)_{\theta,q}$ into $(B_0, B_1)_{\eta, r}$, where $\frac{1-\eta}{\eta} = \frac{\alpha_1}{\alpha_0} \frac{1-\theta}{\theta}$ and $r = \max\{1, \frac{q}{(1-\eta)\alpha_0+\eta\alpha_1} \}$.
\end{proposition}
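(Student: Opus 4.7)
The plan is to adapt the classical Tartar strategy for nonlinear interpolation via the $K$-functional, based on the principle that a single decomposition of $f$ can simultaneously bound both $K_A(t) := K(t,f; A_0, A_1)$ and $K_B(s) := K(s, Tf; B_0, B_1)$. Since the two hypotheses are uniform in $f,g$, the real work is algebraic: one extracts a quantitative comparison of $K_B(s)$ and $K_A(t)^{\alpha_0}$ through a clever linkage $s = s(t)$, and then converts it into a norm estimate by a change of variables that matches the exponents $\theta$ and $\eta$.

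First I would fix $f \in (A_0, A_1)_{\theta, q} \cap U$ and, for each $t > 0$, choose an almost-optimal decomposition $f = g_t + (f - g_t)$ with $g_t \in A_0$ and $\|g_t\|_{A_0} + t\|f - g_t\|_{A_1} \le 2 K_A(t)$. Since $U$ is open in $A_1$ and $f \in U$, the fact that $g_t \to f$ in $A_1$ as $t \to \infty$ ensures $g_t \in A_0 \cap U$ for $t$ large enough, so that $Tg_t \in B_0$ is well-defined. The decomposition $Tf = Tg_t + (Tf - Tg_t)$ combined with both hypotheses gives, for every $s > 0$,
\begin{equation*}
K_B(s) \le \|Tg_t\|_{B_0} + s\,\|Tf - Tg_t\|_{B_1} \le c_0\bigl(2K_A(t)\bigr)^{\alpha_0} + c_1 s\bigl(2K_A(t)/t\bigr)^{\alpha_1}.
\end{equation*}
Balancing the two contributions by the choice $s = s(t) := t^{\alpha_1} K_A(t)^{\alpha_0 - \alpha_1}$ yields the key pointwise bound $K_B(s(t)) \le C\,K_A(t)^{\alpha_0}$, with a constant depending only on $c_0, c_1, \alpha_0, \alpha_1$.

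Setting $\beta := (1-\eta)\alpha_0 + \eta\alpha_1$, the defining relation $(1-\eta)/\eta = \alpha_1(1-\theta)/(\alpha_0\theta)$ reduces to the clean identity $\alpha_1 \eta = \theta\beta$, so that at $s = s(t)$,
\begin{equation*}
s^{-\eta} K_B(s) \le C\,t^{-\alpha_1\eta} K_A(t)^{\alpha_0 - \eta(\alpha_0 - \alpha_1)} = C\bigl(t^{-\theta} K_A(t)\bigr)^{\beta}.
\end{equation*}
For $q = \infty$ this immediately gives $\sup_s s^{-\eta} K_B(s) \lesssim \|f\|^{\beta}_{(A_0, A_1)_{\theta, \infty}}$ and hence the result with $r = \infty$. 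For finite $q$, I would perform the change of variables $t \mapsto s(t)$: the classical monotonicity $K_A(t)\nearrow$ and $K_A(t)/t\searrow$ yield $tK_A'(t)/K_A(t) \in [0,1]$ a.e., and consequently $d(\log s)/d(\log t) \in [\min\{\alpha_0,\alpha_1\},\max\{\alpha_0,\alpha_1\}]$. Thus $s(\cdot)$ is a bi-Lipschitz bijection of $(0,\infty)$ in logarithmic scale with $ds/s \simeq dt/t$, and raising the pointwise bound to the $r$-th power with $r = q/\beta$ (when admissible) and integrating yields $\|Tf\|_{(B_0, B_1)_{\eta, r}} \lesssim \|f\|^{\beta}_{(A_0, A_1)_{\theta, q}}$.

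The main technical obstacle is the regime $q/\beta < 1$, where the natural exponent $r = q/\beta$ violates the standing requirement $r \ge 1$ and one is forced to take $r = 1$. Upgrading an $L^{q/\beta}(ds/s)$ control of $s^{-\eta} K_B(s)$ to an $L^1(ds/s)$ control is not automatic on the infinite measure, and would require invoking a Hardy-type inequality together with the concavity of $K_B$, or equivalently reasoning directly on $dt/t$ using the decreasing-rearrangement structure of the $K$-functional. A secondary but easier point is to handle small $t$ where $g_t$ may escape $U$: this is absorbed using the trivial decomposition $Tf = 0 + Tf \in B_1$ on an initial interval $(0, t_0)$, contributing only harmless lower-order terms.
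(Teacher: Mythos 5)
The paper does not prove this proposition: it is stated with the citation ``(cf.\ [Th\'eor\`eme~I.1]{Tartar})'' and used as a black box. Your reconstruction is therefore a blind reproof of Tartar's theorem, and it follows precisely the standard $K$-functional strategy that underlies Tartar's own argument: near-optimal decomposition $f=g_t+(f-g_t)$, the bound $K_B(s)\le c_0(2K_A(t))^{\alpha_0}+c_1 s(2K_A(t)/t)^{\alpha_1}$, the balancing link $s(t)=t^{\alpha_1}K_A(t)^{\alpha_0-\alpha_1}$, the algebraic identity $\alpha_1\eta=\theta\beta$ with $\beta=(1-\eta)\alpha_0+\eta\alpha_1$, and the logarithmic change of variables controlled by $0\le tK_A'(t)/K_A(t)\le1$. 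All of this checks out.

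Two small remarks on the gaps you flag. The ``main technical obstacle'' you identify in the regime $q/\beta<1$ is actually resolved more cheaply than by Hardy inequalities or rearrangement arguments: since the continuous embedding $(A_0,A_1)_{\theta,q}\subset(A_0,A_1)_{\theta,\infty}$ gives $M:=\sup_{t>0}t^{-\theta}K_A(t)\lesssim\|f\|_{(A_0,A_1)_{\theta,q}}$, one may simply split the exponent,
\[
\int_0^\infty\bigl(t^{-\theta}K_A(t)\bigr)^\beta\,\frac{dt}{t}\le M^{\beta-q}\int_0^\infty\bigl(t^{-\theta}K_A(t)\bigr)^q\,\frac{dt}{t}\lesssim\|f\|_{(A_0,A_1)_{\theta,q}}^\beta,
\]
which, combined with your pointwise bound $s^{-\eta}K_B(s)\lesssim(t^{-\theta}K_A(t))^\beta$ and the bi-Lipschitz reparametrization, yields the $r=1$ conclusion directly. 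For the ``small $t$'' point: to close it one should fix some $t_0$ for which $g_{t_0}\in U$, observe that $\|Tf\|_{B_1}\le\|Tf-Tg_{t_0}\|_{B_1}+\|Tg_{t_0}\|_{B_1}\lesssim\|f-g_{t_0}\|_{A_1}^{\alpha_1}+\|Tg_{t_0}\|_{B_0}$ using $B_0\subset B_1$, and then use the trivial bound $K_B(s)\le s\|Tf\|_{B_1}$ on the initial range $s\le s(t_0)$; since $\eta<1$ this tail is integrable against $ds/s$ near $0$. With these details filled in, the argument is complete and faithful to the cited source.
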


\begin{corollary}[regularity pickup for rough data] \label{cor:less-regular-data}
  Let $\Omega$ be a bounded Lipschitz domain, $G \colon \R^d \times \R^d \times \R \to (0, \infty)$ satisfy Hypothesis \ref{hyp:G}, with $p \in (1,\infty)$, $\beta \in (0,1]$, and let $p' = \frac{p}{p-1}$, and $\theta \in (0,1)$. Then, the solution operator $f \mapsto u$ is bounded between the following spaces
\[
\begin{aligned}
\mbox{ if } p \ge 2 \mbox{ and } f \in W^{-s+\theta \frac\beta{p'}}_{p'}(\Omega) &\ \Rightarrow \ u \in \widetilde{W}^{s+ \theta \frac\beta{p}}_p(\Omega); \\
\mbox{ if } 1 < p < 2 \mbox{ and } f \in W^{-s+\theta \frac\beta2}_{p'}(\Omega) & \ \Rightarrow \ u \in \widetilde{W}^{s+ \theta \frac\beta2}_{p'}(\Omega).
\end{aligned}
\]
\end{corollary}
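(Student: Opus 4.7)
The plan is to apply Proposition \ref{prop:nonlinear-interpolation} (nonlinear interpolation) to the solution map $T \colon f \mapsto u_f$, using the maximal Besov regularity in Theorem \ref{thm:max-regularity} as the ``better'' endpoint and the continuity of $T$ on $W^{-s}_{p'}(\Omega)$ from Remark \ref{R:solution-operator} as the ``worse'' endpoint.

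For $p \ge 2$, I would take $A_0 = B^{-s+\beta/p'}_{p',1}(\Omega)$, $A_1 = W^{-s}_{p'}(\Omega)$, $B_0 = \dot{B}^{s+\beta/p}_{p,\infty}(\Omega)$, $B_1 = \widetilde{W}^s_p(\Omega)$, and $U = A_1$. Then \eqref{eq:max-reg-p-big} yields $\alpha_0 = 1/(p-1)$ while \eqref{eq:Holder-solution-op} yields $\alpha_1 = 1/(p-1) \in (0,1]$. Since $\alpha_0 = \alpha_1$, applying Proposition \ref{prop:nonlinear-interpolation} with parameter $\theta' := 1-\theta$ gives $\eta = 1-\theta$ and $T$ mapping $(A_0,A_1)_{1-\theta,\,p'}$ into $(B_0,B_1)_{1-\theta,\,p}$. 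The reiteration identity \eqref{eq:interpolation_Besov}, together with the identifications $W^\sigma_q = B^\sigma_{q,q}$ and $\widetilde{W}^\sigma_q \simeq \dot{B}^\sigma_{q,q}$ (for non-integer $\sigma$, which is ensured by $\theta \in (0,1)$ after possibly shrinking $\theta$), evaluates these interpolation spaces as $W^{-s+\theta\beta/p'}_{p'}(\Omega)$ and $\widetilde{W}^{s+\theta\beta/p}_p(\Omega)$ respectively, which is exactly the claim.

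For $1 < p < 2$, the two ingredients \eqref{eq:max-reg-p-small} and \eqref{eq:local-Lip-solution-op} are only local in the $W^{-s}_{p'}(\Omega)$-norm. One therefore restricts to $U = \overline{B}_K \subset W^{-s}_{p'}(\Omega)$, for arbitrary but fixed $K>0$. On this ball the estimates degenerate to clean power laws with $\alpha_0 = \alpha_1 = 1$ and constants depending on $K$. Choosing again $\theta' = 1-\theta$ and $q = p'$ in Proposition \ref{prop:nonlinear-interpolation}, and invoking the same reiteration identifications with $\beta/2$ in place of $\beta/p'$, one obtains the mapping of $W^{-s+\theta\beta/2}_{p'}(\Omega) \cap \overline{B}_K$ into $\widetilde{W}^{s+\theta\beta/2}_p(\Omega)$; since $K$ is arbitrary, the conclusion extends to all of $W^{-s+\theta\beta/2}_{p'}(\Omega)$.

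The main (mild) subtlety is parameter bookkeeping: one must take $\theta' = 1-\theta$ in Proposition \ref{prop:nonlinear-interpolation} and $q = p'$ as the second interpolation index, so that the formula $\sigma = (1-\theta')\sigma_0 + \theta'\sigma_1$ of \eqref{eq:interpolation_Besov} produces the advertised exponent $-s+\theta\beta/p'$ (resp.\ $-s+\theta\beta/2$) on the source side and $s+\theta\beta/p$ (resp.\ $s+\theta\beta/2$) on the target side, while the exit index $r = \max\{1,\,p'/((1-\eta)\alpha_0+\eta\alpha_1)\}$ evaluates to $p$ in both cases. The only remaining technicality is to transfer the reiteration identity \eqref{eq:interpolation_Besov} to the zero-extension scale $\dot{B}^\sigma_{p,q}(\Omega)$, which is standard on bounded Lipschitz domains.
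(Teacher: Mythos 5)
Your overall strategy matches the paper's one-line proof: apply Tartar's nonlinear interpolation (\Cref{prop:nonlinear-interpolation}) to the solution map, with the maximal Besov regularity of \Cref{thm:max-regularity} and the continuity from \Cref{R:solution-operator} as endpoints. For $p \ge 2$ the bookkeeping is correct: $\alpha_0 = \alpha_1 = \frac{1}{p-1} \in (0,1]$, $q = p'$, $\eta = 1-\theta$ give $(1-\eta)\alpha_0 + \eta\alpha_1 = \frac1{p-1}$ and therefore $r = \max\{1,\, p'(p-1)\} = p$, so the target is $\dot{B}^{s+\theta\beta/p}_{p,p}(\Omega) = \widetilde{W}^{s+\theta\beta/p}_p(\Omega)$.

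There is, however, an arithmetic slip in the case $1<p<2$. After localizing to a ball you take $\alpha_0 = \alpha_1 = 1$, so $(1-\eta)\alpha_0 + \eta\alpha_1 = 1$, and with $q = p'$ the exit index is
\[
r \;=\; \max\Big\{1,\ \tfrac{p'}{(1-\eta)\alpha_0 + \eta\alpha_1}\Big\} \;=\; \max\{1,\,p'\} \;=\; p',
\]
not $p$ as you assert. Consequently \Cref{prop:nonlinear-interpolation} lands you in $\dot{B}^{s+\theta\beta/2}_{p,p'}(\Omega)$. Since $p < p'$ when $p<2$, the inclusion $\dot{B}^{\sigma}_{p,p}(\Omega) \subsetneq \dot{B}^{\sigma}_{p,p'}(\Omega)$ means this output is strictly larger than the $\widetilde{W}^{s+\theta\beta/2}_p(\Omega)$ you claim, so the last step of your argument — identifying the interpolation output with a Sobolev space at the same differentiability index and integrability $p$ — does not go through. (Notice that the corollary's statement carries the subscript $p'$ on the target in this case, which is consistent with the Tartar output $\dot B^{s+\theta\beta/2}_{p,p'}(\Omega)$ rather than with $\widetilde{W}^{s+\theta\beta/2}_p(\Omega)$.) Two minor points: $U$ in \Cref{prop:nonlinear-interpolation} must be open, so you should work with the open ball $B_K$ rather than $\overline{B}_K$; and the identification of $(A_0,A_1)_{1-\theta,p'}$ with $W^{-s+\theta\beta/2}_{p'}(\Omega)$ requires reiteration on the negative-smoothness scale, which holds but is not literally \eqref{eq:interpolation_Besov} and deserves a word.
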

\begin{proof}
The proof follows by a direct application of Proposition \ref{prop:nonlinear-interpolation}. For $p \ge 2$, we combine \eqref{eq:max-reg-p-big} and \eqref{eq:Holder-solution-op}, while for $1<p<2$ we combine \eqref{eq:max-reg-p-small} and \eqref{eq:local-Lip-solution-op}. 
\end{proof}

\subsection{Operators with finite horizon}
%
Thus far, we have obtained regularity estimates under Hypothesis \ref{hyp:G}. In particular, that hypothesis dictates the smoothness, growth and behavior at $x=y$ of $G(x,y,\rho)$. However, such global behavior constraints can be significantly relaxed to incorporate, for instance, {\em finite-horizon operators.} For simplicity, we now assume $G$ takes the form
\begin{equation} \label{eq:def-G-finite-horizon}
G(x,y,\rho) = \frac{1}{2p} \, \psi\left(\frac{|x-y|}\delta \right) \, |\rho|^p,
\end{equation}
where $1 < p < \infty$ and $\psi : [0,\infty) \to [0,\infty)$ is a given function.
In case $\psi$ is supported in the unit interval $[0,1]$, the parameter $\delta >0$ above is the horizon of the resulting operator
\begin{equation} \label{eq:FH-operator}
\calA_\delta u (x) := \int_{\Rd} \psi\left(\frac{|x-y|}\delta \right) \frac{|u(x)-u(y)|^{p-2} (u(x)-u(y))}{|x-y|^{d+sp}} \, dy .
\end{equation}

Let us assume that $\psi \in L^\infty(\Rd)$.
Then, the choice \eqref{eq:def-G-finite-horizon} trivially fulfills conditions {\bf convexity}, {\bf$p$-growth},  and {\bf continuity}; moreover, we note it satisfies the {\bf symmetry} condition. The only two missing assumptions from Hypothesis \ref{hyp:G} in this setting are {\bf space continuity} and {\bf monotonicity}. At this point, we can define the {\em energy norm} induced by $G$,
\[
\iii{v} := \langle \calA_\delta v, v \rangle^\frac1p = \left(\frac{1}{2} \iint_{\Rd\times\Rd} \psi\left( \frac{|x-y|}{\delta} \right) \frac{|v(x)-v(y)|^p}{|x-y|^{d+sp}} \, d y dx \right)^{\frac1p}
\]
and realize it satisfies
\begin{equation} \label{eq:FH-upper-bound}
\iii{v} \le \left(\frac{\| \psi \|_{L^\infty(\Rd)}}{C_{d,s,p}}\right)^\frac1p  \, \|v\|_{\widetilde{W}^s_p(\Omega)} \quad \forall v \in \widetilde{W}^s_p(\Omega),
\end{equation}
where $C_{d,s,p}$  is the constant given in \eqref{eq:Cdsp}.

Some form of non-degeneracy is needed in order to have a reverse inequality to \eqref{eq:FH-upper-bound}. We assume there exists some $r > 0$ such that $\psi \ge \psi_0 > 0$ on the interval $[0,r]$. This implies the following property:

\smallskip

\noindent {\bf 4'. Local monotonicity.} There exists $\alpha > 0$ such that for all $x,y \in \Rd$ with $|x-y|\le r\delta$ and all $\rho, \rho' \in \R$,
\[ \begin{aligned}
& (G_\rho (x,y,\rho) - G_\rho (x,y,\rho')) (\rho - \rho') \ge \alpha |\rho - \rho'|^p & \quad \mbox{ if } p \ge 2, \\ 
&  (G_\rho (x,y,\rho) - G_\rho (x,y,\rho')) (\rho - \rho') \ge \alpha |\rho - \rho'|^2 | |\rho| + |\rho'| |^{p-2} & \quad \mbox{ if } 1 < p< 2.
\end{aligned}\]

\begin{lemma}[non-degeneracy]\label{L:non-degeneracy}
Let $\psi \ge \psi_0 > 0$ on the interval $[0,r]$. Then, there exists a constant $C=C(d,p,s,\Omega,r, \psi_0, \delta)$ such that
\begin{equation} \label{eq:FH-lower-bound}
\|v\|_{\widetilde W^s_p(\Omega)} \le C \, \iii{v}  \quad \forall v \in \widetilde W^s_p(\Omega). 
\end{equation}
\end{lemma}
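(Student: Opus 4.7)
I would prove the estimate by splitting the Gagliardo integral defining $\|v\|_{\widetilde W^s_p(\Omega)}^p$ at the threshold $|x-y|=r\delta$, treating the two pieces separately, and reducing the far-field piece to an $L^p$ control which I would establish through a Poincar\'e-type inequality.

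\textbf{Near and far parts.} On $\{|x-y|\le r\delta\}$ the hypothesis $\psi\ge\psi_0$ on $[0,r]$ gives $\psi(|x-y|/\delta)\ge\psi_0$, so
\[
\iint_{|x-y|\le r\delta}\frac{|v(x)-v(y)|^p}{|x-y|^{d+sp}}\,dy\,dx\le \frac{2}{\psi_0}\,\iii{v}^p.
\]
On $\{|x-y|>r\delta\}$ the integrand vanishes on $\Omega^c\times\Omega^c$ since $v\equiv 0$ there, leaving the contributions from $\Omega\times\Omega^c$ (and symmetrically $\Omega^c\times\Omega$) and $\Omega\times\Omega$. For the former, $|v(x)-v(y)|^p=|v(x)|^p$ and $\int_{|x-y|>r\delta}|x-y|^{-d-sp}\,dy\lesssim (r\delta)^{-sp}$; for the latter the kernel is bounded by $(r\delta)^{-d-sp}$ and $|\Omega|<\infty$. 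Thus
\[
\iint_{|x-y|>r\delta}\frac{|v(x)-v(y)|^p}{|x-y|^{d+sp}}\,dy\,dx\le C(d,s,p,r,\delta,\Omega)\|v\|_{L^p(\Omega)}^p.
\]

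\textbf{Poincar\'e reduction.} It remains to show
\begin{equation}\label{eq:plan-poincare}
\|v\|_{L^p(\Omega)}^p\lesssim \iint_{|x-y|\le r\delta}\frac{|v(x)-v(y)|^p}{|x-y|^{d+sp}}\,dy\,dx.
\end{equation}
Let $\eta:=\min(\rho,r\delta)/4$ with $\rho$ from \Cref{prop:cone-property}. For each $x\in\Omega_\eta$ the cone $\calC_\eta(\vn(x),\theta)\subset\calO_\eta(x)$ provides a set $E_x\subset B_\eta(x)\cap\Omega^c$ of measure $\gtrsim \eta^d$ on which $v=0$; consequently
\[
|v(x)|^p \lesssim \eta^{-d}\int_{E_x}|v(x)-v(y)|^p\,dy \lesssim \eta^{sp}\int_{E_x}\frac{|v(x)-v(y)|^p}{|x-y|^{d+sp}}\,dy.
\]
Integration over $\Omega_\eta$ yields \eqref{eq:plan-poincare} with $\|v\|_{L^p(\Omega)}$ replaced by $\|v\|_{L^p(\Omega_\eta)}$. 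A finite chain of overlapping balls of radius $\eta/2$ (of length $\lesssim \mathrm{diam}(\Omega)/\eta$) together with the local fractional Poincar\'e inequality on each ball then propagates the bound to the interior $\Omega\setminus\Omega_\eta$; the resulting constant depends on $d,s,p,r,\delta,\Omega$, which is permitted.

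\textbf{Main obstacle.} The delicate step is the Poincar\'e inequality \eqref{eq:plan-poincare}. Transporting $L^p$ control from a boundary strip into the interior of $\Omega$ through a chain whose length degrades as $\delta\to 0$ is precisely what forces the $\delta$-dependence of $C$ in the statement, and care is needed to accumulate the constants along the chain without picking up uncontrollable factors.
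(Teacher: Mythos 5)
Your plan is correct in outline but follows a genuinely different route from the paper, and it is worth comparing the two. Both proofs split the Gagliardo integral, but you split at the fixed distance $|x-y|=r\delta$, whereas the paper introduces a free scale $R\gg r\delta$ and splits pairs according to whether both points lie in $B_R(0)$. That extra parameter is the whole game: the contribution from $B_R(0)\times B_R(0)^c$ carries a coefficient $\sim R^{-sp}$ that can be made arbitrarily small, so the paper only needs the \emph{standard} Poincar\'e inequality $\|v\|_{L^p(\Omega)}\lesssim\|v\|_{\widetilde W^s_p(\Omega)}$ together with a kick-back, while the inner block $B_R(0)\times B_R(0)$ is handled by invoking \cite[Lemma~7]{DyKa13}, which bounds the full Gagliardo integral on a ball by its truncation to $|x-y|\le r\delta$ with a polynomial factor $(3R/(r\delta))^{p(1-s)}$. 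Your split at $r\delta$ leaves you with a far-field coefficient fixed by $r\delta$ that cannot be absorbed, forcing you to establish the \emph{truncated} Poincar\'e inequality $\|v\|_{L^p(\Omega)}^p\lesssim\iint_{|x-y|\le r\delta}|v(x)-v(y)|^p|x-y|^{-d-sp}$ from scratch via a boundary-strip estimate plus chaining. This does work (and you correctly flag the chaining as the delicate step, with the chain length degrading as $\delta\to0$), but it amounts to re-deriving by hand precisely what \cite[Lemma~7]{DyKa13} combined with the ordinary Poincar\'e inequality delivers in one line, so it is longer and the constant accounting is more involved. Two small imprecisions in your sketch: first, the existence of $E_x\subset B_\eta(x)\cap\Omega^c$ with $|E_x|\gtrsim\eta^d$ follows from the uniform \emph{exterior} cone condition enjoyed by Lipschitz domains, not directly from the inclusion $\calC_\eta(\vn(x),\theta)\subset\calO_\eta(x)$, whose elements are admissible translation \emph{vectors} rather than exterior points — the two are both consequences of the Lipschitz graph structure but are logically distinct statements; second, the chaining bookkeeping (bounded chain length, bounded overlap multiplicity, H\"older when summing the telescoped oscillations) has to be carried out explicitly, and although it only costs polynomial factors in $1/(r\delta)$, which is permitted since $C$ may depend on $\delta$, it would need to be written down to make the argument complete.
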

\begin{proof}
We invoke the localization estimate of \cite[Lemma 7]{DyKa13}, which reads
\begin{align*} \label{eq:FH-chain-bound}
  \iint_{B_R(0) \times B_R(0)} & \frac{|v(x)-v(y)|^p}{|x-y|^{d+sp}} \, d y dx
  \\
  & \le \left(\frac{3R}{r \delta} \right)^{p(1-s)} \iint_{B_R(0) \times B_R(0)} \frac{|v(x)-v(y)|^p}{|x-y|^{d+sp}} \, \chi_{ \{|x-y| \le r \delta\} } \, d y dx,
\end{align*}
and is valid for all $R > r \delta > 0$ and $s \in (0,1)$. This implies
\begin{equation*}
  \iint_{B_R(0) \times B_R(0)} \frac{|v(x)-v(y)|^p}{|x-y|^{d+sp}} \, d y dx
  \le \frac{2(3R)^{p(1-s)}}{(r \delta)^{p(1-s)} \psi_0 }\iii{v}^p .
\end{equation*}
regardless of whether or not the support of $\psi$ is compact.
Given $v \in \widetilde W^s_p(\Omega)$, we use this bound with $R>0$ sufficiently large so that $\Omega \subset B_R(0)$ and $\mbox{dist}(\Omega, \partial B_R(0)) \ge \frac{R}2$, exploit the fact that $|x-y| \ge  \frac{R}2$ for all $x \in \Omega$ and $y \in B_R(0)^c$ and integrate in polar coordinates to get
\begin{align*}
\|v\|_{\widetilde W^s_p(\Omega)}^p & = \frac{C_{d,s,p}}{2}\iint_{B_R(0) \times B_R(0)} \frac{|v(x)-v(y)|^p}{|x-y|^{d+sp}} \, d y dx
\\
& + C_{d,s,p} \iint_{B_R(0) \times B_R(0)^c} \frac{|v(x)|^p}{|x-y|^{d+sp}} \, d y dx
  \\
& \le \frac{(3R)^{p(1-s)} C_{d,s,p}}{(r \delta)^{p(1-s)} \psi_0 } \iii{v}^p +  \frac{2^{sp} C_{d,s,p} \omega_{d-1} }{s p \, R^{sp}} \| v \|_{L^p(\Omega)}^p,
\end{align*}
where $\omega_{d-1}=|\mathbb{S}^{d-1}|$ denotes the $(d-1)$-dimensional measure of the unit sphere $\mathbb{S}^{d-1}=\partial B_1(0)$ in $\Rd$. We next use the well-known Poincar\'e inequality
\[
\| v \|_{L^p(\Omega)} \le C(\Omega, p) \|v\|_{\widetilde W^s_p(\Omega)} \qquad \forall v \in \widetilde{W}^s_p(\Omega), 
\]
and take $R>0$ sufficiently large such that
\[
\frac{2^{sp} C_{d,s,p} \omega_{d-1} }{s p \, R^{sp}} \| v \|_{L^p(\Omega)}^p \le \frac12 \|v\|_{\widetilde W^s_p(\Omega)}^p \quad \forall v \in \widetilde W^s_p(\Omega),
\]
to  obtain a constant $C=C(d,p,s,\Omega,r, \psi_0, \delta)$ such that \eqref{eq:FH-lower-bound} holds. 
\end{proof}

By combining \eqref{eq:FH-upper-bound} and \eqref{eq:FH-lower-bound}, we deduce that the energy norm $\iii{\cdot}$ is equivalent to the $\widetilde W^s_p(\Omega)$-norm. Consequently, the Dirichlet problem for the operator $\mathcal A_{\delta}$ defined in \eqref{eq:FH-operator} is well-posed in $\widetilde W^s_p(\Omega)$ uniformly in $s$: if $f \in W^{-s}_{p'}(\Omega)$, there exists a unique $u \in \widetilde W^s_p(\Omega)$ satisfying
\begin{equation*}\label{eq:weak-delta}
\langle \mathcal A_{\delta} u , v \rangle = \langle f , v \rangle \quad \forall v \in \widetilde W^s_p(\Omega),
\end{equation*}
and we have the stability bound $\| u \|_{\widetilde W^s_p(\Omega)} \lesssim \|f\|_{W^{-s}_{p'}(\Omega)} $.

Another consequence of the equivalence between the energy norm $\iii{\cdot}$ and the $\widetilde W^s_p(\Omega)$-norm is that the variational approach of Sections \ref{S:regularity-functionals} and \eqref{S:regularity-solutions} hinges on the regularity of $\calF_1$ and $\calF_G$ and still applies in this context regardless of the support of $\psi$. We state this next.

\begin{corollary}[operator with H\"older continuous $\psi$]\label{C:FH-smooth-phi}
Let $\psi$ in \eqref{eq:def-G-finite-horizon} be globally $\beta$-H\"older continuous with $\beta\in(0,1]$ and satisfy $\psi \ge \psi_0 > 0$ on the interval $[0,r]$ for $r>0$. Then the maximal regularity estimates \eqref{eq:max-reg-p-big} for $2\le p <\infty$ and \eqref{eq:max-reg-p-small} for $1<p<2$ are valid regardless of the support of $\psi$.
\end{corollary}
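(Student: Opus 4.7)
The plan is to verify that the proof of Theorem \ref{thm:max-regularity} transfers to the finite-horizon setting by checking each of its three key ingredients: the regularity of $\calF_1$ (Proposition \ref{prop:regularity-f}), the regularity of $\calF_G$ (Proposition \ref{prop:regularity-FG}), and the monotonicity-based bound of Lemma \ref{lem:corollary1}. Proposition \ref{prop:regularity-f} makes no reference to $G$, so it applies unchanged. For Proposition \ref{prop:regularity-FG}, an inspection of its proof shows that only the $p$-growth and the $\beta$-H\"older space continuity of $G$ are invoked, both of which hold here: the growth follows from $\psi \in L^\infty(\Rd)$, and space continuity follows from
\[
|G(x,y,\rho) - G(x',y',\rho)| \le \frac{[\psi]_{C^\beta}}{2p\,\delta^\beta}\, \big||x-y|-|x'-y'|\big|^\beta |\rho|^p \le C(|x-x'|^\beta + |y-y'|^\beta)|\rho|^p,
\]
using the reverse triangle inequality and subadditivity of $t \mapsto t^\beta$ on $\beta \in (0,1]$. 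Consequently $\calF = \calF_G - \calF_1$ is $(T, \calO_\rho(x_0), \beta)$-regular on $\widetilde{W}^s_p(\Omega)$, just as in the classical setting.

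The main obstacle is the monotonicity assumption, used crucially in Lemma \ref{lem:corollary1}, which is only available in its \emph{local} form (Property 4'). The idea is that the local monotonicity combined with Lemma \ref{L:non-degeneracy} is enough to recover the global monotonicity bounds \eqref{eq:monotonicity-pge2} and \eqref{eq:monotonicity-ple2}. Indeed, writing $\langle \calA_\delta u - \calA_\delta v, u-v\rangle$ explicitly and restricting integration to $\{|x-y|\le r\delta\}$, where $\psi \ge \psi_0$, yields for $p\ge 2$
\[
\langle \calA_\delta u - \calA_\delta v, u-v\rangle \ge \frac{\alpha\psi_0}{2} \iint_{|x-y|\le r\delta} \frac{|(u-v)(x) - (u-v)(y)|^p}{|x-y|^{d+sp}}\, dy\, dx,
\]
and the Dyda--Kassmann estimate \cite[Lemma 7]{DyKa13} invoked in the proof of Lemma \ref{L:non-degeneracy} bounds this below by $c \|u-v\|^p_{\widetilde{W}^s_p(\Omega)}$. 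For $p \in (1,2)$, the analogous local bound combined with a H\"older inequality with exponents $2/p$ and $2/(2-p)$, applied against the measure $|x-y|^{-d-sp}\,dx\,dy$ restricted to $\{|x-y|\le r\delta\}$, yields \eqref{eq:monotonicity-ple2}; here one again uses the Dyda--Kassmann localization to replace the truncated Gagliardo seminorm by the full one.

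With Propositions \ref{prop:regularity-f} and \ref{prop:regularity-FG} and the effective monotonicity in hand, the conclusion of Lemma \ref{lem:corollary1} is valid verbatim in this setting, and so is the bootstrapping argument in the proof of Theorem \ref{thm:max-regularity}. This yields \eqref{eq:max-reg-p-big} for $p\ge 2$ and \eqref{eq:max-reg-p-small} for $p\in(1,2)$. Observe that the support of $\psi$ never enters the argument: the only positivity requirement is $\psi \ge \psi_0$ on the short interval $[0,r]$, which suffices both for local monotonicity and for the non-degeneracy of the energy norm. The H\"older continuity of $\psi$ on all of $[0,\infty)$ is what yields the global $\beta$-H\"older continuity of $G(\cdot,\cdot,\rho)$ needed for Proposition \ref{prop:regularity-FG}, but no decay or compact support is assumed.
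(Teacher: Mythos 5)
Your proposal is correct and follows the same route the paper outlines: establish that the energy norm induced by $\calA_\delta$ is equivalent to $\|\cdot\|_{\widetilde W^s_p(\Omega)}$ via the Dyda--Kassmann localization (Lemma \ref{L:non-degeneracy}), verify that $G$ of the form \eqref{eq:def-G-finite-horizon} inherits the $p$-growth, continuity, and $\beta$-H\"older space-continuity conditions from $\psi \in L^\infty$ and $\psi \in C^\beta$, and conclude that the bootstrap of Theorem \ref{thm:max-regularity} applies unchanged. The paper dispatches the corollary in a single sentence following Lemma \ref{L:non-degeneracy}; you usefully make explicit the step where local monotonicity (Property 4') combined with the non-degeneracy estimate recovers the effective global bounds \eqref{eq:monotonicity-pge2} and \eqref{eq:monotonicity-ple2} needed for Lemma \ref{lem:corollary1}, including the H\"older-inequality argument for $p < 2$.
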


\begin{remark}[tempered fractional Laplacians]
Besides being applicable to finite-horizon operators with $\beta$-H\"older continuous kernel, the previous result is valid for a family of  {\em tempered} fractional $p$-Laplacians. Concretely, we let  $\lambda = \frac1\delta>0$ and $\psi(\rho) = e^{-\lambda \rho}$ to obtain
\[
\calA_\lambda u (x):= \int_\Rd  \frac{|u(x) - u(y)|^{p-2} (u(x) - u(y))}{e^{\lambda|x-y|}\, |x-y|^{d+sp}} \, dy .
\]
In the linear setting ($p=2$), this operator arises from the study of tempered L\'evy flights and has been investigated for example in \cite{Deng:18}. For the homogeneous Dirichlet problem associated to the operator $\calA_\lambda$ above, the regularity estimates in Theorem \ref{thm:max-regularity} are valid with $\beta = 1$.
\end{remark}

Finite-horizon operators in practice usually involve a discontinuous function $\psi$ such as the characteristic
function of $[0,r]$. This does not fit within the preceding theory but maximal Besov regularity is still valid,
at least for linear operators, provided $\psi$ is H\"older in a neighborhood of the origin. We explore this next,
but before we point out that even local regularity seems excessive, an interesting question to investigate.

We need to make the following local regularity assumption to compensate for the lack of {\bf space continuity}
hypothesis of $G$:

\smallskip
\noindent {\bf 2'. Local regularity.} There exists $r > 0$ such that $\psi$ is of class $C^\beta$ on the interval $[0,r]$
  for some $\beta \in (0,1]$, and $\psi \ge \psi_0 > 0$ on $[0,r]$.

\begin{theorem}[linear finite-horizon operator with discontinuous $\psi$]\label{T:FH-smooth-phi}
  Let $\psi$ satisfy the previous local regularity assumption with some $\beta \in (0,1].$
If $G(x,y,\rho)$ is of the form \eqref{eq:def-G-finite-horizon} with $p=2$, then the
following maximal regularity holds
\begin{equation}\label{eq:lift-finite-horizon}
  \|u\|_{\dot{B}^{s+\frac\beta2}_{2,\infty}(\Omega)} \lesssim \|f\|_{B^{-s+\frac\beta2}_{2,1}(\Omega)}.
\end{equation}
\end{theorem}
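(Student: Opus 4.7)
The plan is to exploit linearity by decomposing $\psi = \psi_1 + \psi_2$ into a globally H\"older part $\psi_1$, to which \Cref{C:FH-smooth-phi} applies directly, and a bounded remainder $\psi_2$ supported away from the origin. Fix a smooth cutoff $\eta \colon [0,\infty) \to [0,1]$ with $\eta \equiv 1$ on $[0, r/3]$ and $\supp \eta \subset [0, r/2]$, and set $\psi_1 := \eta \psi$, $\psi_2 := (1-\eta)\psi$. Then $\psi_1$ is globally of class $C^\beta$ with $\psi_1 \ge \psi_0 > 0$ on $[0, r/3]$, while $\supp \psi_2 \subset [r/3,\infty)$. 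This induces a splitting of the linear operator $\calA_\delta = \calA_1 + \calA_2$, and the equation $\calA_\delta u = f$ reads $\calA_1 u = f - \calA_2 u$.

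By \Cref{C:FH-smooth-phi}, for every $\beta' \in (0,\beta]$ the inverse $\calA_1^{-1}$ maps $B^{-s+\beta'/2}_{2,1}(\Omega)$ boundedly into $\dot B^{s+\beta'/2}_{2,\infty}(\Omega)$, so it suffices to place $\calA_2 u$ in the appropriate Besov space on the right-hand side. The kernel $k_2(z) := \psi_2(|z|/\delta)|z|^{-d-2s}$ is bounded (since $\supp k_2 \subset \{|z| \ge r\delta/3\}$) and integrable, whence $\calA_2 u = c_{k_2} u - k_2 * u$ with $c_{k_2} := \int k_2$ is a translation-invariant Fourier multiplier with bounded symbol $c_{k_2} - \hat{k_2}$. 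Therefore $\calA_2$ maps $H^\sigma(\Rd) \to H^\sigma(\Rd)$ continuously for every $\sigma \ge 0$, and starting from $u \in \widetilde W^s_2(\Omega)$ I obtain $\calA_2 u \in H^s(\Omega)$.

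If $\beta < 4s$, then $-s+\beta/2 < s$, and the Besov embedding $H^s(\Omega) = B^s_{2,2}(\Omega) \subset B^{-s+\beta/2}_{2,1}(\Omega)$ yields $f - \calA_2 u \in B^{-s+\beta/2}_{2,1}(\Omega)$, producing \eqref{eq:lift-finite-horizon} in a single step. In the complementary regime $\beta \ge 4s$ I would bootstrap: assume inductively that $u \in H^{\sigma_k}(\Omega)$ with $\sigma_k \le -s+\beta/2$. Then $\calA_2 u \in H^{\sigma_k}(\Omega) \subset B^{-s+\beta_k/2}_{2,1}(\Omega)$ for any $\beta_k \in (0,\beta]$ with $-s+\beta_k/2 < \sigma_k$, and \Cref{C:FH-smooth-phi} with such $\beta_k$ delivers $u \in \dot B^{s+\beta_k/2}_{2,\infty}(\Omega) \subset H^{\sigma_{k+1}}(\Omega)$ with $\sigma_{k+1} \approx \sigma_k + 2s$ (up to an arbitrarily small loss in the $B^\sigma_{2,\infty} \hookrightarrow H^{\sigma-\varepsilon}$ step). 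This linear growth crosses the threshold $-s+\beta/2$ in at most $\lceil \beta/(4s) \rceil$ iterations, whereupon a final application with $\beta_k = \beta$ yields \eqref{eq:lift-finite-horizon}.

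The hard part is the bootstrap in the case $\beta \ge 4s$: one must manage the arbitrarily small $\varepsilon$-losses in the Besov--Sobolev embeddings at each step and verify that the accumulated constants remain bounded across the finitely many iterations. Both issues are routine given the linearity of the problem and the $H^\sigma$-boundedness of $\calA_2$ on every Sobolev scale.
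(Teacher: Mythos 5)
Your decomposition is correct and the strategy works; it is a close cousin of the paper's perturbation argument (Step 1 of the paper's proof also modifies $\psi$ away from $[0,r]$ to make it globally $C^\beta$ and treats the difference as a regular convolution). The difference is in execution: you invoke \Cref{C:FH-smooth-phi} as a black box on $\calA_1$ with right-hand side $f-\calA_2 u$, and you bound the remainder $\calA_2$ as a translation-invariant operator with bounded symbol, so it is bounded on $H^\sigma(\R^d)$ for \emph{every} $\sigma$. The paper instead keeps everything inside the variational framework: it rewrites the problem as $\wt\calA_\delta u = \wt f := f+\calB_\delta u$, proves only the $\dot B^t_{2,\infty}$-boundedness of $\calB_\delta$ ($0<t<1$) via Young's inequality on first differences, and re-derives the basic recurrence \eqref{eq:basic-recurrence} for the functional $\calF$ with the modified right-hand side. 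As a consequence, the paper's bootstrap is a two-stage iteration gaining $s/2$ per step (preliminary regularity $u\in \dot B^{\beta/2}_{2,\infty}$, then a final iteration with $\sigma_{k+1}=\beta/4+\sigma_k/2$), whereas yours gains roughly $2s$ per step and the dichotomy at $\beta<4s$ vs.\ $\beta\ge 4s$ replaces the paper's $s\ge\beta/2$ vs.\ $s<\beta/2$ split. Your route is shorter and more modular because it uses the Sobolev scale uniformly in $\sigma$ and does not re-open the proof of \Cref{C:FH-smooth-phi}; the paper's route stays self-contained inside its variational machinery.

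Two small points deserve care, neither of which is a real gap. First, when you pass from $u\in \dot B^{s+\beta_k/2}_{2,\infty}(\Omega)$ to $u\in H^{\sigma_{k+1}}(\R^d)$ (needed to apply the Fourier multiplier bound on $\calA_2$), you rely on the zero-extension of a Besov function on $\Omega$ being in the global Besov space; this is standard for $s+\beta_k/2<3/2$ but worth stating, since $\calA_2 u$ must be read as a function on $\R^d$ before restriction. Second, when you apply \Cref{C:FH-smooth-phi} with $\beta_k<\beta$, you implicitly use that the compactly supported $\psi_1\in C^\beta$ is also globally $C^{\beta_k}$ and that the constant degrades only through $\|\psi_1\|_{C^\beta}$ and the diameter of its support; that is fine, but should be flagged when tracking the accumulated constants over the $\lceil \beta/(4s)\rceil$ iterations.
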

\begin{proof}
We resort to a perturbation argument. We proceed in several steps.

\medskip\noindent
1. {\it Perturbation}:
  Let $\wt{\psi}\in C^\beta[0,\infty)$ coincide with $\psi$ on $[0,r]$
and rewrite the equation $\calA_\delta u = f$ with $u=0$ in $\Omega^c$ as
\[
\wt{\calA}_\delta u = f + \big( \wt{\calA}_\delta u - \calA_\delta u \big) = \wt{f}.
\]
The operator $\calB_\delta := \wt{\calA}_\delta - \calA_\delta$ is a convolution operator that reads as follows in terms of the function $\vp := \wt\psi - \psi$, which vanishes on $[0,r]$:
\[
\calB_\delta u(x) = \int_{\R^d} \vp \Big( \frac{|z|}{\delta} \Big) \frac{u(x) - u(x-z)}{|z|^{d+2s}} dz
  = K u(x) + k * u(x).
\]
Above, $k(z) = \vp \big(\frac{|z|}{\delta} \big) |z|^{-d-2s}$ and $K = \int_{\R^d} k(z) dz<\infty$ because
$\psi \in L^\infty(\R^d)$.

\medskip\noindent
2. {\it Properties of $\calB_\delta$}: $\calB_\delta:\dot{B}^t_{2,\infty}(\Omega) \to {B}^t_{2,\infty}(\Rd)$
is a linear bounded operator
\[
\|\calB_\delta u \|_{\dot{B}^t_{2,\infty}(\Omega)} \lesssim \| u \|_{\dot{B}^t_{2,\infty}(\Omega)}
\quad \forall \, 0 < t < 1.
\]
It suffices to examine $k * u$ which, using its linear structure and Young's inequality with
$K=\|k\|_{L^1(\R^d)}$, yields
\[
\|k*u(\cdot+h)-k*u\|_{L^2(\R^d)} \le K \|u(\cdot+h)-u\|_{L^2(\R^d)} \quad\forall\, h\in\R^d.
\]
Consequently, we deduce the asserted estimate from
\[
\frac{\|\calB_\delta u(\cdot+h) - \calB u\|_{L^2(\R^d)}}{|h|^t} \le K
\frac{\|u(\cdot+h)-u\|_{L^2(\R^d)}}{|h|^t} \quad \forall\, 0 < t  < 1.
\]

\medskip\noindent
3. {\it Regularity of functionals}: Combining the local estimate \eqref{eq:regularity-f-Besov} of
Proposition \ref{prop:regularity-f} (regularity of $\calF_1$)  for $\wt{f}$, the argument of 
Theorem \ref{thm:max-regularity} (maximal regularity) leading to \eqref{eq:regularity-continuity-modulus} and
\eqref{eq:first-bound-p-big} yields the following estimate for any $\sigma\in(0, \beta]$ and $t\in(-s,\sigma)$
\begin{equation*}
\|u\|_{\dot{B}^{s+\sigma/2}_{2,\infty}(\Omega)}^2 \lesssim 
 \|f\|_{B^t_{2,1}(\Omega)}^2 + \|f\|_{B^t_{2,1}(\Omega)} \|u\|_{B^{\sigma-t}_{2,\infty}(\Omega)}
+ \|\calB_\delta u\|_{B^t_{2,1}(\Omega)} \|u\|_{B^{\sigma-t}_{2,\infty}(\Omega)},
\end{equation*}
where we have used the definition of $\wt{f}$ and Step 2 to obtain the last term. To be able to iterate
this estimate we observe that $\|\calB_\delta u\|_{B^t_{2,1}(\Omega)} \lesssim \|\calB_\delta u\|_{B^{t+2\epsilon}_{2,\infty}(\Omega)}$ for any
$\epsilon>0$ and to simplify the subsequent derivation we take $\epsilon = \frac{s}{2}$ so that $t+2\epsilon = t + s > 0$. We thus have $\|\calB_\delta u\|_{B^t_{2,1}(\Omega)} \lesssim \| u \|_{B^{t+2\epsilon}_{2,\infty}(\Omega)}$
and 
\begin{equation}\label{eq:basic-recurrence}
\|u\|_{\dot{B}^{s+\sigma/2}_{2,\infty}(\Omega)}^2 \lesssim
\|f\|_{B^t_{2,1}(\Omega)}^2 + \|f\|_{B^t_{2,1}(\Omega)} \|u\|_{B^{\sigma-t}_{2,\infty}(\Omega)}
+ \|u\|_{B^{t+s}_{2,\infty}(\Omega)} \|u\|_{B^{\sigma-t}_{2,\infty}(\Omega)}.
\end{equation}

\medskip\noindent
4. {\it Preliminary regularity}: We now claim that $u \in \dot{B}^{\frac\beta2}_{2,\infty}(\Omega)$ with 
\begin{equation} \label{eq:partial-regularity}
\| u \|_{\dot{B}^{\frac\beta2}_{2,\infty}(\Omega)} \lesssim  \|f\|_{B^{-s+\frac\beta2}_{2,1}(\Omega)}.
\end{equation}
If $s \ge \frac\beta2$ this is straightforward, because of the stability bound $\| u \|_{\wt{H}^s(\Omega)} \lesssim \|f\|_{H^{-s}(\Omega)}$ and the continuity of the embedding $\wt{H}^s(\Omega) \subset \dot{B}^{\beta/2}_{2,\infty}(\Omega)$.

In the case $s < \frac\beta2$, we iterate \eqref{eq:basic-recurrence}. We set $\sigma_0:=0$ and
\begin{equation} \label{eq:def-sigma}
\sigma_{k+1} := \sigma_k + \frac{s}{2}.
\end{equation}
In the first few iterations we could have $0<\sigma_k \le -s + \frac\beta2$, in which case we set $\sigma := 2 \sigma_{k+1} = s +2\sigma_k \in [s,\beta-s]\subset (0, \beta]$ and $t := \sigma_k = \frac{\sigma -s}2 \in [0,\frac{\sigma}{2})\subset(-s,\sigma)$. This choice of parameters yields $\sigma - t = t + s = s + \sigma_k$, whence \eqref{eq:basic-recurrence} reads
\[
\|u\|_{\dot{B}^{s+\sigma_{k+1}}_{2,\infty}(\Omega)}^2 \lesssim
\|f\|_{B^t_{2,1}(\Omega)}^2 + \|f\|_{B^t_{2,1}(\Omega)} \|u\|_{B^{s + \sigma_k}_{2,\infty}(\Omega)}
+ \|u\|_{B^{s + \sigma_k}_{2,\infty}(\Omega)}^2.
\]
Additionally, continuity of the embedding $B^{-s+\frac\beta2}_{2,1}(\Omega) \subset B^t_{2,1}(\Omega)$, gives
\[
\|u\|_{\dot{B}^{s+\sigma_{k+1}}_{2,\infty}(\Omega)}^2 \lesssim
\|f\|_{B^{-s+\frac\beta2}_{2,1}(\Omega)}^2 + \|u\|_{B^{s + \sigma_k}_{2,\infty}(\Omega)}^2,
\]
and the bound $\|u\|_{\dot{B}^{s+\sigma_k}_{2,\infty}(\Omega)} \lesssim\|f\|_{B^{-s+\frac\beta2}_{2,1}(\Omega)}$
valid for $k=0$ implies
\[
\|u\|_{\dot{B}^{s+\sigma_{k+1}}_{2,\infty}(\Omega)} \lesssim
\|f\|_{B^{-s+\frac\beta2}_{2,1}(\Omega)} ,
\]
for as long as $\sigma_k \le -s +\frac\beta2$.
Moreover, because $\sigma_{k+1} = \sigma_k + \frac{s}{2} ,$ we have a regularity improvement by the fixed amount $\frac{s}{2}$ in each iteration. Therefore, after a finite (but $s$-dependent) number $k_*$ of iterations, we reach $\sigma_{k_*} > -s +\frac\beta2$ and deduce the validity of the regularity bound \eqref{eq:partial-regularity}.

\medskip\noindent
5. {\it Final regularity}: We now assume \eqref{eq:partial-regularity} and define the new sequence
\[
\sigma_0 := 0, \quad \sigma_{k+1} := \frac\beta4 + \frac{\sigma_k}2 \quad \Rightarrow \quad \sigma_{k} = \frac\beta2 \left( 1 - \frac{1}{2^{k}}\right) \to \frac\beta2 .
\]
We fix $\sigma = \sigma_k + \frac\beta2 \in (0, \beta]$ and $t = - s + \frac\beta2 \in (-s, \sigma)$ in \eqref{eq:basic-recurrence}, and note that $s + \sigma/2 = s +\sigma_{k+1},$ $\sigma - t = s + \sigma_k,$ and $t + s = \frac\beta2$, to arrive at
\[
\|u\|_{\dot{B}^{s+\sigma_{k+1}}_{2,\infty}(\Omega)}^2 \lesssim
\|f\|_{B^{-s+\frac\beta2}_{2,1}(\Omega)}^2 + \|f\|_{B^{-s+\frac\beta2}_{2,1}(\Omega)} \|u\|_{B^{s+\sigma_k}_{2,\infty}(\Omega)}
+ \|u\|_{B^{\frac\beta2}_{2,\infty}(\Omega)}  \|u\|_{B^{s+\sigma_k}_{2,\infty}(\Omega)}.
\]
Using \eqref{eq:partial-regularity}, we get
\begin{equation*}\label{eq:second-recurrence}
\|u\|_{\dot{B}^{s+\sigma_{k+1}}_{2,\infty}(\Omega)}^2 \le
\left( C_1 \|f\|_{B^{-s+\frac\beta2}_{2,1}(\Omega)} + C_2 \|u\|_{B^{s+\sigma_{k}}_{2,\infty}(\Omega)} \right) \|f\|_{B^{-s+\frac\beta2}_{2,1}(\Omega)},
\end{equation*}
with constants $C_1,C_2$ depending on $\Omega, d$ and $s$. We finally proceed as in the proof of Theorem \ref{thm:max-regularity} (maximal Besov regularity) to show
\[
\|u\|_{\dot{B}^{s+\sigma_{k+1}}_{2,\infty}(\Omega)} \le 
\Lambda \|f\|_{B^{-s+\frac\beta2}_{2,1}(\Omega)}
\]
for a finite number $\Lambda > 0$. We trivially have 
$
\|u\|_{\dot{B}^{s+\sigma_{0}}_{2,\infty}(\Omega)} \le \Lambda_0 \|f\|_{B^{-s+\beta/2}_{2,1}(\Omega)}
$
for some $\Lambda_0 (d,s,\Omega) > 0$. We define $\Lambda_{k+1} := (C_1 + C_2 \Lambda_k)^{1/2}$ and realize that
\[
\|u\|_{\dot{B}^{s+\sigma_{k+1}}_{2,\infty}(\Omega)} \le \Lambda_{k+1} \|f\|_{B^{-s+\frac\beta2}_{2,1}(\Omega)}.
\]
Since $\Lambda_{k+1} \le \max\{ \Lambda_0, (2C_1+C_2)^{1/2} \} =: \Lambda$, passing to the limit $k \to \infty$, the desired estimate \eqref{eq:lift-finite-horizon} follows immediately.
\end{proof}

\begin{remark}[truncated fractional Laplacians]\label{R:Sobolev-delta}
Estimate \eqref{eq:lift-finite-horizon}  holds whenever $\psi$ is locally $\beta$-H\"older continuous at the origin. Consequently, it applies with $\beta = 1$ to (linear) {\em truncated} fractional Laplacians \cite{Burkovska:19}
\[
\calA u (x) := C(d,s,\delta) \int_{B_\delta (x)} \frac{u(x) - u(y)}{|x-y|^{d+2s}} \, dy ,
\]
for which $\psi (\rho) = \chi_{[0,1]}(\rho)$.
\end{remark}

In the same fashion as \Cref{C:Sobolev-delta}, the following maximal Sobolev regularity holds for operators of the form \eqref{eq:FH-operator} and, in particular, for linear truncated fractional Laplacians and tempered fractional $p$-Laplacians. 
\begin{corollary}[maximal Sobolev regularity]\label{C:FH}
Under the hypothesis of either \Cref{C:FH-smooth-phi} for any $p\in(1,\infty)$
    or of \Cref{T:FH-smooth-phi} for $p=2$,
for all $\eps>0$ sufficiently small and $q = \max\{ 2, p \}$ there holds
\begin{equation*}\label{eq:FH-BesovSobolev}
  \| u \|_{\wt{W}^{s+\frac{\beta}{q} - \eps}_p(\Omega)} \lesssim \eps^{-\frac{1}{p}} 
   \|f\|_{W^{-s}_{p'}(\Omega)}^{\frac{q-p}{p-1}}  \| f \|_{B^{-s+\frac{\beta}{q'}}_{p',1}(\Omega)}^{\frac1{q-1}}.
\end{equation*}
\end{corollary}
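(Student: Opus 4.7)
The plan is to reduce the claim to a direct consequence of the Besov regularity already established in Corollary \ref{C:FH-smooth-phi} (for any $p\in(1,\infty)$, assuming $\psi\in C^\beta[0,\infty)$) or Theorem \ref{T:FH-smooth-phi} (for $p=2$ with merely local H\"older continuity of $\psi$), combined with the zero-extension analogue of the embedding Lemma \ref{lemma:Besov-Sobolev-emb}. The structure is essentially identical to the derivation of Corollary \ref{C:Sobolev-delta} from Theorem \ref{thm:max-regularity}, so the corollary is a packaging result rather than a new estimate.

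First, I would split into two regimes according to $q = \max\{2, p\}$. If $p \in [2,\infty)$, so $q = p$, Corollary \ref{C:FH-smooth-phi} yields
\[
\|u\|_{\dot{B}^{s+\beta/p}_{p,\infty}(\Omega)} \lesssim \|f\|_{B^{-s+\beta/p'}_{p',1}(\Omega)}^{1/(p-1)}.
\]
Since in this case $(q-p)/(p-1)=0$ and $1/(q-1) = 1/(p-1)$, the asserted inequality collapses to a pure Besov-to-Sobolev lift. If instead $p \in (1,2)$, so $q = 2$, the same Corollary produces
\[
\|u\|_{\dot{B}^{s+\beta/2}_{p,\infty}(\Omega)} \lesssim \|f\|_{W^{-s}_{p'}(\Omega)}^{(2-p)/(p-1)}\|f\|_{B^{-s+\beta/2}_{p',1}(\Omega)},
\]
and now $(q-p)/(p-1)=(2-p)/(p-1)$, $1/(q-1)=1$, so the target inequality again becomes a lift of this bound. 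The case $p=2$ under Theorem \ref{T:FH-smooth-phi} is subsumed by the second template via \eqref{eq:lift-finite-horizon}.

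Second, in each regime I would apply the zero-extension version of Lemma \ref{lemma:Besov-Sobolev-emb} with target Sobolev index $\sigma = s + \beta/q - \eps$ and differentiability increment $\eps \in (0, s+\beta/q)$, which yields
\[
\|u\|_{\wt{W}^{s+\beta/q-\eps}_p(\Omega)} \lesssim \eps^{-1/p}\,\|u\|_{\dot{B}^{s+\beta/q}_{p,\infty}(\Omega)}.
\]
Inserting the relevant Besov bound from the previous step gives exactly the stated inequality, with the factor $\eps^{-1/p}$ traced back to the embedding constant in \eqref{eq:Besov-Sobolev-emb}.

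The only potential subtlety is justifying that Lemma \ref{lemma:Besov-Sobolev-emb} persists when $B^{\sigma+\eps}_{p,\infty}(\Omega)$ and $W^\sigma_p(\Omega)$ are replaced by $\dot{B}^{\sigma+\eps}_{p,\infty}(\Omega)$ and $\wt W^\sigma_p(\Omega)$; this is exactly the point addressed in the paper immediately preceding Corollary \ref{C:Sobolev-delta}, where it is noted that the interpolation-of-$K$-functional argument transfers verbatim to the zero-extension setting. Consequently no genuine obstacle arises: once the Besov regularity of Corollary \ref{C:FH-smooth-phi} and Theorem \ref{T:FH-smooth-phi} is in hand, the corollary follows by a one-step embedding argument in each of the three cases.
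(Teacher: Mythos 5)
Your proposal is correct and matches the paper's intended argument: the paper explicitly introduces Corollary \ref{C:FH} with the phrase ``In the same fashion as \Cref{C:Sobolev-delta}'', and your derivation — plugging the Besov estimates of \Cref{C:FH-smooth-phi} (or \eqref{eq:lift-finite-horizon}) into the zero-extension analogue of \Cref{lemma:Besov-Sobolev-emb} with $\sigma = s+\beta/q-\eps$ and increment $\eps$ — is precisely that argument. The case split by $q=\max\{2,p\}$ and the bookkeeping of the exponents $(q-p)/(p-1)$, $1/(q-1)$, $\beta/q'$ are all handled correctly.
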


\section{Approximation} \label{sec:FE}
As an application of our regularity estimates, we consider discretizations of the problem
\eqref{eq:Dirichlet} by means of the finite element method with piecewise linear continuous functions. From now on, we assume that $G$ satisfies Hypothesis \ref{hyp:G}.

Let $h_0 > 0$; for $h \in (0, h_0]$, we let $\mathcal{T}_h$ denote a triangulation of $\Omega$, i.e., $\mathcal{T}_h = \{T\}$ is a partition of $\Omega$ into simplices $T$ of diameter $h_T$ and $h = \max_{T \in \Th} h_T$. 
We assume the family $\{\Th \}_{h>0}$ to be shape-regular, namely,
\[
\sigma := \sup_{h>0} \max_{T \in \Th} \frac{h_T}{\rho_T} <\infty,
\]
where $\rho_T $ is the diameter of the largest ball contained in $T$. We take elements to be closed sets.

Let $\mathcal{N}_h$ be the set of interior vertices of $\Th$, $N$ be its cardinality and $\{ \varphi_i \}_{i=1}^N$ be the standard piecewise linear Lagrangian basis, with $\phii_i$ associated to the node $\x_i \in \mathcal{N}_h$. With this notation, the set of discrete functions is
\begin{equation*} \label{eq:FE_space}
\wt{\mathbb{V}}_h :=  \left\{v: \Rd \to \R \colon v \in C_0(\Rd), \ v = \sum_{i=1}^N v_i \varphi_i \right\},
 \end{equation*}
where $v$ is trivially extended by zero outside $\Omega$. It is clear that $\wt{\mathbb{V}}_h \subset \widetilde{W}^s_p(\Omega)$ for all $s \in (0,1), p \in (1,\infty)$. Therefore, we consider a direct finite element discretization and seek $u_h \in \wt{\mathbb{V}}_h$ such that
\begin{equation} \label{eq:weak-discrete}
\iint_{\Rd\times\Rd} \widetilde G \left( x, y, \frac{u_h(x)-u_h(y)}{|x-y|^s} \right) \frac{(u_h(x)-u_h(y))(v_h(x)-v_h(y))}{|x-y|^{d+2s}} \, dx \, dy = \langle f , v_h \rangle
\end{equation}
for all $v_h \in \wt{\mathbb{V}}_h$, where we recall that $\widetilde G (x,y, \rho) = G_\rho (x,y,\rho)/\rho$. Clearly, $u_h$ solves \eqref{eq:weak-discrete} if and only if it is the minimizer of the restriction of the convex functional $\calF$ from \eqref{eq:def-energy} over the linear space $\wt{\mathbb{V}}_h$; existence of discrete solutions follows immediately. Moreover, if we take $v_h = u_h$ in \eqref{eq:weak-discrete}, then we immediately obtain the discrete stability bound
\begin{equation} \label{eq:discrete_stability}
\|u_h\|_{\widetilde W^s_p(\Omega)} \lesssim \| f \|_{W^{-s}_{p'}(\Omega)}^{\frac1{p-1}}.
\end{equation}

\subsection{Localization and interpolation estimates}

The seminorm $|\cdot|_{W^s_p(\Rd)}$ is nonlocal and, consequently, is non-additive with respect to domain partitions. To localize it, we first define the star (or patch) of a set  $A \in \Omega$ by
\[
  S_A^1 := \bigcup \big\{ T \in \Th \colon T \cap A \neq \emptyset \big\}.
\]
Given $T \in \Th$, the star $S_T^1$ of $T$ is the first ring of $T$ and the star $S_T^2$ of $S_T^1$
is the second ring of $T$. We have the following localization estimate, that can be proved with the same arguments as in \cite{Faermann2, Faermann}:
\begin{equation} \label{eq:localization-F}
  |v|_{W^s_p(\Omega)}^p \leq \sum_{T \in \Th} \left[ \int_T \int_{S_T^1} \frac{|v (x) - v (y)|^p}{|x-y |^{d+sp}} \; dy \; dx + C(d,\sigma) \frac{2^p}{s p h_T^{sp}} \| v \|^p_{L^p(T)} \right]
\end{equation}
for all $ v \in W^s_p(\Omega)$.

This localization of fractional-order seminorms implies that, in order to prove global interpolation estimates in $W^s_p(\Omega)$, it suffices to produce over the set of patches $\{T \times S_T^1 \}_{T \in \Th}$ plus local, zero-order contributions.

We point out, however, that clearly if one wants to have a zero-extension norm on the left hand side in \eqref{eq:localization-F}, then interactions between $\Omega$ and $\Omega^c$ must be accounted for in the right hand side. For that purpose, following \cite{BoNo21Constructive}, we introduce the {\it extended} stars
\begin{equation*}
\wt{S}_T^1 :=
\left\lbrace \begin{array}{rl}
  S_T^1   & \textrm{if } T \cap \partial\Omega = \emptyset,
  \\
  B_T    & \textrm{otherwise,}
\end{array}\right.
\end{equation*}
where $B_T=B(x_T,Ch_T)$ is the ball of center $x_T$ and radius $Ch_T$, with $x_T$ being the barycenter of $T$, and $C=C(\sigma)$ a shape regularity dependent constant such that $S_T^1\subset B_T$. The extended second ring $\wt{S}_T^2$ of $T$ is given by
\[
\wt{S}_T^2 := \bigcup \big\{ \wt{S}_{T'}^1 \colon T' \in \Th, T' \cap S_T^1 \neq \emptyset \big\}.
\]

The localization of the $W^s_p(\Rd)$-seminorm reads \cite[Lemma 4.1]{BoNo21Constructive}:
\begin{equation} \label{eq:localization-F-tilde}
  \|v\|_{\wt{W}^s_p(\Omega)}^p =  |v|_{W^s_p(\Rd)}^p \leq \sum_{T \in \Th} \left[ \int_T \int_{\widetilde S_T^1} \frac{|v (x) - v (y)|^p}{|x-y |^{d+sp}} \; dy \; dx +  C(d,\sigma) \frac{2^p}{s p h_T^{sp}} \| v \|^p_{L^p(T)} \right]
\end{equation}
for all $v \in \wt{W}^s_p(\Omega)$.

Our use of \eqref{eq:localization-F-tilde} will be restricted to $v$ being an interpolation error; in such a case, $v$ has vanishing means over elements and thus we can estimate the scaled $L^p$ contributions in terms of local $W^s_p$ seminorms by using Poincar\'e inequalities. 
We consider a suitable (such as Cl\'ement or Scott-Zhang) quasi-interpolation operator, 
\[
\Pi_h: \widetilde W^s_p(\Omega)\to\mathbb{V}_h ,
\]
which is stable in $W^s_p(\Omega)$ and for which one can prove the following local approximation estimates (see, for example, \cite{AcosBort2017fractional,BoNoSa18,CiarletJr}):
\begin{equation} \label{eq:interpolation} \begin{split}
& \| v - \Pi_h v \|_{L^p(T)} \le C \, h_T^{pr}  |v|_{W^r_p(S_T^1)}^p, \\
&  \int_T \int_{\widetilde S_T^1} \frac{|(v-\Pi_h v) (x) - (v-\Pi_h v) (y)|^p}{|x-y|^{d+sp}} \, d y \, d x \le C \, h_T^{p(r-s)}  |v|_{W^r_p(\wt{S}_T^2)}^p,
\end{split} \end{equation}
for all $T \in \Th$, $s \in (0,1)$, $r \in (s, 2]$, $v \in W^r_p (\wt{S}_T^2)$, where $C = C(\Omega,d,s,\sigma,right)$. 

Combining \eqref{eq:localization-F-tilde} and \eqref{eq:interpolation}, we deduce localized interpolation error estimates.

\begin{proposition}[localized interpolation estimates]\label{P:error-estimate}
Let $s \in (0,1)$, $p \in (1,\infty)$, $r \in (s, 2]$, and $\Pi_h: \widetilde W^s_p(\Omega)\to\mathbb{V}_h$ be a
quasi-interpolation operator as above.  If $v \in \widetilde W^r_p (\Omega)$, then
\begin{equation}\label{eq:interpolation-error}
\|v - \Pi_h v \|_{\widetilde W^s_p(\Omega)}^p \leq  C \left(\sum_{T \in \Th} h_T^{p(r-s)} |v|_{W^r_p(\wt{S}_T^2)}^p\right)^{\frac1p},
\end{equation}
where $C = C(\Omega,d,s,\sigma, right)$.
\end{proposition}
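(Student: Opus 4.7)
The plan is a direct synthesis of the two ingredients: the localization identity \eqref{eq:localization-F-tilde} for the zero-extension seminorm and the local interpolation estimates \eqref{eq:interpolation}. I would apply \eqref{eq:localization-F-tilde} to the error function $v-\Pi_h v \in \widetilde W^s_p(\Omega)$, which yields
\[
\|v-\Pi_h v\|_{\widetilde W^s_p(\Omega)}^p \le \sum_{T \in \Th} \int_T \int_{\widetilde S_T^1} \frac{|(v-\Pi_h v)(x)-(v-\Pi_h v)(y)|^p}{|x-y|^{d+sp}}\,dy\,dx + \sum_{T\in\Th} C(d,\sigma)\frac{2^p}{sp\, h_T^{sp}}\|v-\Pi_h v\|_{L^p(T)}^p.
\]
Note this is legitimate because $v-\Pi_h v$ inherits compact support in $\overline\Omega$ from $v$ and $\Pi_h v$.

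Next I would control each piece separately by the local interpolation estimates \eqref{eq:interpolation}. The second term is treated by the $L^p$-estimate in \eqref{eq:interpolation}, which gives $\|v-\Pi_h v\|_{L^p(T)}^p \le C h_T^{pr} |v|_{W^r_p(S_T^1)}^p$; dividing by $h_T^{sp}$ produces exactly the desired factor $h_T^{p(r-s)}$, and $|v|_{W^r_p(S_T^1)}^p \le |v|_{W^r_p(\widetilde S_T^2)}^p$ since $S_T^1 \subset \widetilde S_T^2$. The first term is handled directly by the seminorm-type estimate in \eqref{eq:interpolation}, which already produces the factor $h_T^{p(r-s)} |v|_{W^r_p(\widetilde S_T^2)}^p$.

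Summing these element-by-element bounds gives
\[
\|v-\Pi_h v\|_{\widetilde W^s_p(\Omega)}^p \le C \sum_{T \in \Th} h_T^{p(r-s)} |v|_{W^r_p(\widetilde S_T^2)}^p,
\]
with $C=C(\Omega,d,s,\sigma,r)$, which is the asserted bound (modulo the apparent typo of the outer exponent $1/p$ on the right-hand side of the statement). There is essentially no obstacle: all the nontrivial work has been absorbed into \eqref{eq:localization-F-tilde} and \eqref{eq:interpolation}. The only mild point worth verifying is that the shape regularity of $\{\Th\}$ ensures that the patches $\widetilde S_T^2$ have bounded overlap, so that if one later wishes to collapse the right-hand side into a global seminorm $|v|_{W^r_p(\Omega)}$, no logarithmic or $h$-dependent factor is incurred; in the present localized formulation this is not needed.
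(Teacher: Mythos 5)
Your proof is correct and follows exactly the paper's argument: the paper itself deduces \eqref{eq:interpolation-error} simply by combining \eqref{eq:localization-F-tilde} with \eqref{eq:interpolation}, which is precisely your two-step synthesis, including the observation that $S_T^1 \subset \widetilde S_T^2$ lets the two local contributions merge. You are also right that the stated bound has a mismatched exponent (the left-hand side carries a power $p$ while the right-hand side carries $1/p$); the natural reading is the one you derive, without the outer $1/p$.
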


\subsection{Error estimates in $\widetilde{W}^s_p(\Omega)$} \label{sec:convergence}
We next derive some error estimates for the finite element solutions discussed in \S\ref{sec:FE}. We borrow techniques from the finite element analysis of classical (local) quasi-linear problems. The technique presented in \cite{GlMa75} or \cite[\S5.3]{Ciarlet} exploits the continuity and monotonicity of the operator, but not the fact that $u$ and $u_h$ solve respective minimization problems. We obtain enhanced rates by adapting an approach by Chow \cite{Chow89} for the classical $p$-Laplacian.

\begin{theorem}[error estimates]
	\label{thm:FE-error-Chow}
Let $\Omega$ be a bounded Lipschitz domain, assume that $G$ satisfies Hypothesis \ref{hyp:G}, let $p \in (1,\infty)$, $\beta \in (0,1]$ be as in such assumptions and let $p' = \frac{p}{p-1}$. Assume $f \in B^{-s+\gamma}_{p',1}(\Omega)$, where $\gamma = \max \{ \beta / p', \beta /2 \}$. Let $u$ and $u_h$ be the respective solutions of \eqref{eq:weak} and \eqref{eq:weak-discrete}. Then, if $p \in (1,2]$ it holds that 
\begin{equation} \label{eq:error_smallp}
\|u - u_h \|_{\widetilde W^s_p(\Omega)} 
\lesssim \inf_{v_h \in \mathbb{V}_h} \|u - v_h \|_{\widetilde W^s_p(\Omega)}^{p/2} 
\lesssim h^{\frac{\beta p}4 } | \log h |^{\frac12} .
\end{equation}

On the other hand, if $p \in [2,\infty)$, we have the error bound
\begin{equation} \label{eq:error_bigp}
\|u - u_h \|_{\widetilde W^s_p(\Omega)} 
\lesssim \inf_{v_h \in \mathbb{V}_h} \|u - v_h \|_{\widetilde W^s_p(\Omega)}^{2/p}
\lesssim h^{\frac{2 \beta}{p^2}} | \log h |^{\frac{2}{p^2}} .
\end{equation}
\end{theorem}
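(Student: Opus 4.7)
The plan is to adapt Chow's energy-minimization technique \cite{Chow89}, originally developed for the local $p$-Laplacian, to our nonlocal setting. Its advantage over the Glowinski--Marocco/C\'ea approach is that it exploits both sides of the energy gap $\calF(u_h) - \calF(u)$ rather than only the monotonicity and continuity of $\calA$. Since $u$ minimizes $\calF$ over $\widetilde{W}^s_p(\Omega)$ and $u_h$ minimizes $\calF$ over $\wt{\mathbb{V}}_h$, the discrete minimality $\calF(u_h) \le \calF(v_h)$ for every $v_h \in \wt{\mathbb{V}}_h$ lets one sandwich the energy gap between a coercive lower bound in $\|u - u_h\|_{\widetilde{W}^s_p(\Omega)}$ and an upper bound in $\|u - v_h\|_{\widetilde{W}^s_p(\Omega)}$. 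Both bounds follow from Taylor expansion of $G$ around $\rho_u := (u(x)-u(y))/|x-y|^s$ together with the identity $\langle \calA u, w \rangle = \langle f, w\rangle$ for all $w\in\widetilde{W}^s_p(\Omega)$.

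The case $p \ge 2$ is the more direct one. Combining uniform convexity with the $p$-growth and continuity bounds in Hypothesis \ref{hyp:G}, I expect the two-sided second-order bound
\[
c|\rho_2 - \rho_1|^p \le G(x,y,\rho_2) - G(x,y,\rho_1) - G_\rho(x,y,\rho_1)(\rho_2-\rho_1) \le C (|\rho_1|+|\rho_2|)^{p-2} |\rho_2-\rho_1|^2.
\]
Substituting the Gagliardo quotients $\rho_u, \rho_{u_h}, \rho_{v_h}$, integrating over $Q_\Omega$, and using the stability of $u$, $u_h$, and $v_h = \Pi_h u$ in $\widetilde{W}^s_p(\Omega)$ should yield
\[
\|u - u_h\|_{\widetilde{W}^s_p(\Omega)}^p \lesssim \calF(u_h) - \calF(u) \le \calF(v_h) - \calF(u) \lesssim \|u - v_h\|_{\widetilde{W}^s_p(\Omega)}^2,
\]
which is the first inequality in \eqref{eq:error_bigp}. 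For $p\in(1,2]$ the two-sided bound flips to
\[
c|\rho_2-\rho_1|^2 (|\rho_1|+|\rho_2|)^{p-2} \le G(x,y,\rho_2) - G(x,y,\rho_1) - G_\rho(x,y,\rho_1)(\rho_2-\rho_1) \le C|\rho_2-\rho_1|^p,
\]
so the lower bound on the energy gap becomes degenerate. To recover $\|u - u_h\|_{\widetilde{W}^s_p(\Omega)}^2$, the plan is to apply H\"older's inequality with exponents $(2/p,\, 2/(2-p))$ to the integral representation of $\|u-u_h\|_{\widetilde{W}^s_p(\Omega)}^p$, distributing the degenerate weight, and absorb the resulting factor $(\|u\|_{\widetilde{W}^s_p(\Omega)}+\|u_h\|_{\widetilde{W}^s_p(\Omega)})^{(2-p)p/2}$ through \eqref{eq:stability} and \eqref{eq:discrete_stability}. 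This should give $\|u - u_h\|_{\widetilde{W}^s_p(\Omega)}^2 \lesssim \|u - v_h\|_{\widetilde{W}^s_p(\Omega)}^p$, the first inequality in \eqref{eq:error_smallp}.

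For the second inequality in each case, I would take $v_h = \Pi_h u$, invoke \Cref{C:Sobolev-delta} to obtain $u \in \widetilde{W}^{s+\beta/q - \eps}_p(\Omega)$ with norm controlled by $\eps^{-1/p}\|f\|$-type quantities, where $q:=\max\{2,p\}$, and plug this into the interpolation estimate \eqref{eq:interpolation-error}. Optimizing $\eps \sim 1/|\log h|$ balances $h^{\beta/q - \eps}$ against the $\eps^{-1/p}$ factor and yields $\inf_{v_h \in \wt{\mathbb{V}}_h}\|u - v_h\|_{\widetilde{W}^s_p(\Omega)} \lesssim h^{\beta/q}|\log h|^{1/p}$. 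Raising this to the power $2/p$ (resp.\ $p/2$) for $p\ge 2$ (resp.\ $p\in(1,2]$) reproduces the advertised rates $h^{2\beta/p^2}|\log h|^{2/p^2}$ and $h^{\beta p/4}|\log h|^{1/2}$.

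The main obstacle I expect is the subquadratic case, where the reverse H\"older step needs the uniform bound \eqref{eq:discrete_stability} on $\|u_h\|_{\widetilde{W}^s_p(\Omega)}$ to keep the absorbed constant independent of $h$; without exploiting the full second-order energy structure, a direct monotonicity/continuity argument \`a la Glowinski--Marocco would only deliver the weaker rate $\|u-u_h\|\lesssim \|u-v_h\|^{1/(p-1)}$. The other technical point is tracking the precise logarithmic exponents through the $\eps$-dependent embedding \eqref{eq:Besov-Sobolev-emb}, but this is routine.
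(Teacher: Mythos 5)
Your proposal is correct and matches the paper's argument: both use Chow's energy-gap sandwich $\calF(u_h) - \calF(u) \le \calF(v_h) - \calF(u)$ with a coercive lower bound and a continuity upper bound, then invoke \Cref{C:Sobolev-delta}, the interpolation estimate \eqref{eq:interpolation-error}, and the $\eps\sim1/|\log h|$ optimization for the best-approximation rate. The only difference is presentational: the paper applies the global $W^s_p$-level estimates \eqref{eq:monotonicity-pge2}--\eqref{eq:monotonicity-ple2} and \eqref{eq:continuity-ple2}--\eqref{eq:continuity-pge2} to the integral identity $\calF(v)-\calF(u) = \int_0^1 \langle \calA(u+t(v-u)) - \calA u, t(v-u)\rangle\,\frac{dt}{t}$, whereas you state the two-sided bounds pointwise as Bregman-divergence estimates on $G$ and integrate afterward, with your reverse-H\"older absorption for $p<2$ reproducing exactly the passage from Hypothesis \ref{hyp:G} (monotonicity) to \eqref{eq:monotonicity-ple2} in Remark \ref{rem:monotonicity}.
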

\begin{proof}
For any $v \in \widetilde{W}^s_p(\Omega)$, using either \eqref{eq:monotonicity-ple2} or \eqref{eq:monotonicity-pge2}, we write
\[ \begin{aligned}
\calF(v) - \calF(u) & = \int_0^1 \langle \calF'(u+t(v-u)) - \calF'(u), v-u \rangle \, dt \\
& = \int_0^1 \langle \calA(u+t(v-u)) - \calA u, t(v-u) \rangle \, \frac{dt}{t} \\
& \ge \left\lbrace
\begin{array}{rl}
\frac{C}{p} \|u-v\|_{\widetilde W^s_p(\Omega)}^2 \left( \|u\|_{\widetilde W^s_p(\Omega)} + \|u-v\|_{\widetilde W^s_p(\Omega)} \right)^{p-2} , & \mbox{if } p \in (1,2], \\
\frac{\alpha}{p} \|u-v\|_{\widetilde W^s_p(\Omega)}^p,  & \mbox{if } p \in [2,\infty).
\end{array}
\right. 
\end{aligned} \]

In addition, if we use either \eqref{eq:continuity-ple2} or \eqref{eq:continuity-pge2}, we obtain
\[ \begin{aligned}
\calF(v) - \calF(u) & = \int_0^1 \langle \calA(u+t(v-u)) - \calA u, t(v-u) \rangle \, \frac{dt}{t} \\
& \le \left\lbrace
\begin{array}{rl}
\frac{C}{p} \|u-v\|_{\widetilde W^s_p(\Omega)}^p , & \mbox{if } p \in (1,2], \\
\frac{C}{p} (\|u\|_{\widetilde W^s_p(\Omega)} + \|u-v\|_{\widetilde W^s_p(\Omega)})^{p-2} \|u-v\|_{\widetilde W^s_p(\Omega)}^2,  & \mbox{if }  p \in [2,\infty).
\end{array}
\right. . 
\end{aligned} \]

The proof now follows easily. Indeed, for any $v_h \in \mathbb{V}_h \subset \widetilde{W}^s_p(\Omega)$, we have, for $p \in (1, 2]$, 
\begin{equation*}
\begin{aligned}
c (\|u\|_{\widetilde W^s_p(\Omega)} + \|u-u_h\|_{\widetilde W^s_p(\Omega)} )^{p-2} \|u-u_h\|_{\widetilde W^s_p(\Omega)}^2
& \le \calF(u_h) - \calF(u) \\
& \le \calF(v_h) - \calF(u) \le c \|u-v_h\|_{\widetilde W^s_p(\Omega)}^p.
\end{aligned} 
\end{equation*}
By the stability estimates \eqref{eq:stability} and \eqref{eq:discrete_stability}, we have 
\[
\|u\|_{\widetilde W^s_p(\Omega)} + \|u-u_h\|_{\widetilde W^s_p(\Omega)} \lesssim \| f \|_{W^{-s}_{p'}(\Omega)}^{\frac1{p-1}} \lesssim  \| f \|_{B^{-s+\gamma}_{p',1}(\Omega)}^{\frac1{p-1}}
\] 
and therefore 
\[
\|u-u_h\|_{\widetilde W^s_p(\Omega)} \lesssim \|u-v_h\|_{\widetilde W^s_p(\Omega)}^{p/2}, \quad \forall v_h \in \mathbb{V}_h,
\]
which proves the first inequality in \eqref{eq:error_smallp}.

Similar considerations yield, for $p \in [2,\infty)$,
\[
\|u-u_h\|_{\widetilde W^s_p(\Omega)}^p \lesssim (\|u\|_{\widetilde W^s_p(\Omega)} + \|u-v_h\|_{\widetilde W^s_p(\Omega)} )^{p-2} \|u-v_h\|_{\widetilde W^s_p(\Omega)}^2, \quad \forall v_h \in \mathbb{V}_h,
\]
and thus the first inequality in \eqref{eq:error_bigp} holds.

We now set $v_h = \SZ u$, use the stability of $\Pi_h$ in $\widetilde W^s_p(\Omega)$, the quasi-interpolation estimate \eqref{eq:interpolation-error} and the regularity bounds \eqref{eq:sob-regularity-pLap-p>2} or \eqref{eq:sob-regularity-pLap-p<2}, to conclude:
\[
\|u-u_h\|_{\widetilde W^s_p(\Omega)} \lesssim
\begin{cases}
\|u-v_h\|_{\widetilde W^s_p(\Omega)}^{p/2} \lesssim   h^{\frac{\beta p}4 -\frac{\eps p}2} \, \eps^{-\frac12}, & \mbox{ if } p \in (1,2], \\
\|u-v_h\|_{\widetilde W^s_p(\Omega)}^{2/p} \lesssim  h^{\frac{2 \beta}{p^2} -\frac{2 \eps}p} \, \eps^{-\frac2{p^2}}, & \mbox{ if } p \in [2,\infty).
\end{cases}
\] 
for $\eps > 0$ sufficiently small. The result follows by setting $\eps = |\log h|^{-1}$.
\end{proof}

\section{Computational exploration} \label{sec:numerical}

This section presents several numerical experiments for the Dirichlet problem \eqref{eq:Dirichlet}. We use the finite element discretization discussed in Section \ref{sec:FE}
on either quasi-uniform or shape-regular graded meshes $\Th$ with grading parameter $\mu \ge 1$ satisfying
\begin{equation}
\label{eq:mesh_grad} 
h_T \approx \left\lbrace \begin{array}{ll}
C(\sigma) h^\mu, & \overline T \cap \pp \Omega \neq \emptyset \\ 
C(\sigma) h \; \dist(T,\pp\Omega)^{(\mu-1)/\mu}, & \overline T \cap \pp \Omega = \emptyset
\end{array} \right.
\end{equation}
for every $T \in \Th$. We refer to \cite{BoLiNo-Barrett} for further details on this grading strategy and additional computational experiments.

\subsection{Fractional $(p,s)$-Laplacians}
Along this section, we consider the energy minimization problem \eqref{eq:def-energy} with $G(x,y,\rho) = \frac{C_{d,s,p}}{2p} |\rho|^p$ for $p \in (1, \infty)$, that gives rise to the fractional $(p,s)$-Laplace operator \eqref{eq:def-pLap}. 

\begin{example}[boundary behavior]\label{ex:boundary-layer-psLap}
	We let $\Omega = (-0.5,0.5)^2 \setminus [0,0.5) \times (-0.5, 0]$  be an $L$-shaped domain and $f = 1$, and investigate the boundary behavior of numerical solutions. From the analytical results for the linear problem ($p = 2$), we expect the solution to have a boundary behavior of the type
	\[
	u(x) \approx \dist(x, \partial \Omega)^{\alpha(s,p)}.
	\]
We estimate the power $\alpha(s,p)$ near different points on $\partial \Omega$: 
	the mid-point of one of the edges $(0,0.5)$, a convex corner $(-0.5,0.5)$ and the reentrant corner $(0,0)$.
	We compute the numerical solutions on the graded mesh with $\mu = 2$ and $9467$ free nodes, and
 fit the power $\alpha(s,p)$ using mesh points near the boundary points mentioned above, specifically along the normal direction near the edge mid-point and along the bisectors of the angles near the corners.
 We report the results we obtain in \Cref{table:frac-Lap-2d-power-p3} and \Cref{table:frac-Lap-2d-power-p125} for $p=3$ and $p=1.75$, respectively. 
 	
	Despite the limited mesh resolution, near the edge mid-point, we observe $\alpha(s,p) \approx s$ for both $p = 3$ and $p = 1.75$. This is consistent with the behavior shown in \cite{Iannizzotto:14,MR4109087} for domains satisfying an exterior ball condition.  In addition, we  notice that $\alpha(s,p) > s$ near the convex corner, $\alpha(s,p) < s$ near the reentrant corner, and the deviation from $s$ is larger when $p = 1.75$ compared to $p = 3$.

	\begin{table}[htbp]
	\centering\scalebox{0.95}{
		\begin{tabular}{|c |c| c| c| c| c| c| c| c| c| c|}
			\hline 
			Value of $s$ & 0.1 & 0.2 & 0.3 & 0.4 & 0.5 & 0.6 & 0.7 & 0.8 & 0.9 \\ \hline
			Edge mid-point	 & 0.11 & 0.21 & 0.31 & 0.41 & 0.51 & 0.61 & 0.70 & 0.80 & 0.90  \\ \hline  		
			Convex corner	& 0.09 & 0.22 & 0.35 & 0.49 & 0.64 & 0.79 & 0.94 & 1.11 & 1.29  \\ \hline
			Reentrant corner & 0.06 & 0.16 & 0.24 & 0.31 & 0.39 & 0.46 & 0.54 & 0.62 & 0.70  \\ \hline   		  				
		\end{tabular}
	}
        \vskip0.2cm
	\caption{Example \eqref{ex:boundary-layer-psLap}: Exponents $\alpha=\alpha(s,p)$ of boundary asymptotics
        $u(x) \approx \dist(x,\partial\Omega)^\alpha$ for $p = 3$, different values of fractional order $s$, and three qualitatively distinct boundary points.}
	\label{table:frac-Lap-2d-power-p3}
\end{table}

\begin{table}[htbp]
	\centering\scalebox{0.95}{
		\begin{tabular}{|c |c| c| c| c| c| c| c| c| c| c|}
			\hline 
			Value of $s$ & 0.1 & 0.2 & 0.3 & 0.4 & 0.5 & 0.6 & 0.7 & 0.8 & 0.9 \\ \hline
			Edge mid-point	 & 0.10 & 0.21 & 0.31 & 0.41 & 0.51 & 0.60 & 0.70 & 0.79 & 0.89  \\ \hline	
			Convex corner	&  0.11 & 0.27 & 0.46 & 0.66 & 0.86 & 1.07 & 1.29 & 1.51 & 1.76   \\ \hline
			Reentrant corner & 0.07 & 0.12 & 0.17 & 0.23 & 0.29 & 0.35 & 0.42 & 0.49 & 0.57  \\ \hline	
		\end{tabular}
	}
        \vskip0.2cm
	\caption{Example \eqref{ex:boundary-layer-psLap}: Exponents $\alpha=\alpha(s,p)$ of boundary asymptotics $u(x) \approx \dist(x,\partial\Omega)^\alpha$ for $p = 1.75$, different values of fractional order $s$, and three qualitatively distinct boundary points.}
	\label{table:frac-Lap-2d-power-p125}
\end{table}	
\end{example}

\begin{example}[convergence rates]\label{ex:rates-psLap}
Consider the square domain $\Omega = (-0.5,0.5)^2$ and $f = 1$. Since $f$ is smooth, \Cref{C:Sobolev-delta} (maximal Sobolev regularity) gives  $u \in \wt{W}^{s + 1/p - \eps}_p(\Omega)$ for $p \ge 2$ and $u \in \wt{W}^{s + 1/2 - \eps}_p(\Omega)$ for $1 < p \le 2$. We compute numerical solutions for $p = 3$, $p=1.75$ and different values of $s$ on quasi-uniform meshes, and examine convergence rates in the energy norm. Since the exact solutions $u$ are unknown, we use $\Vert u_h - u_{h/2} \Vert_{\wt{W}^s_p(\Omega)}$ as a proxy for $\Vert u_h - u \Vert_{\wt{W}^s_p(\Omega)}$. 
\Cref{table:frac-Lap-2d-sq-rates} summarizes our findings.

\begin{table}[htbp]
	\centering\scalebox{0.95}{
		\begin{tabular}{|c |c| c| c| c| c| c| c| c| c| c|}
			\hline 
			Value of $s$ & 0.1 & 0.2 & 0.3 & 0.4 & 0.5 & 0.6 & 0.7 & 0.8 & 0.9 \\ \hline
			$p = 3$ &  0.326 & 0.335 & 0.333 & 0.329 & 0.328 & 0.329 & 0.335 & 0.357 & 0.494 \\ \hline	
			$p = 1.75$ &  0.558 & 0.552  & 0.555 & 0.561 & 0.569 & 0.583 & 0.607 & 0.658 & 0.790 \\ \hline
		\end{tabular}
	}
        \vskip0.2cm
	\caption{\Cref{ex:rates-psLap}: Convergence rates on uniform meshes for $p = 1.75, 3$ and different values of $s$. They indicate that the theoretical rates in \Cref{thm:FE-error-Chow} (error estimates) might be suboptimal.}
	\label{table:frac-Lap-2d-sq-rates}
\end{table}	

The rates are approximately $1/p \approx 0.33$ for $p = 3$ except for the case $s = 0.9$, where we believe the discrepancy is due to the proxy solution not being sufficiently refined in comparison to the rest of the experiments. Although the rate $1/p$ is larger than the theoretical rate $2/p^2$ of \eqref{eq:error_bigp} in \Cref{thm:FE-error-Chow} (error estimates), we point out that it is consistent with the best approximation error
\[
\inf_{v_h \in \mathbb{V}_h} \|u - v_h \|_{\widetilde W^s_p(\Omega)} \lesssim
h^{1/p} | \log h |^{1/p}.
\]
This indicates that, instead of the regularity of $u$,
 the suboptimal rates in the theory might be a consequence of the suboptimal power $2/p$ in the error estimate of \eqref{eq:error_bigp}
\[
\|u - u_h \|_{\widetilde W^s_p(\Omega)} 
\lesssim \inf_{v_h \in \mathbb{V}_h} \|u - v_h \|_{\widetilde W^s_p(\Omega)}^{2/p}.
\]

Similarly, for $p = 1.75$, we observe that the rates are approximately $1/p \approx 0.57$ except for $s = 0.7, 0.8, 0.9$, where we believe the meshes are not fine enough to deliver accurate rates. This indicates that instead of the regularity $u \in \wt{W}^{s + 1/2 - \eps}_p(\Omega)$ proved in \Cref{C:Sobolev-delta}, the solution $u$ in this example might satisfy $u \in \wt{W}^{s + 1/p - \eps}_p(\Omega)$. In addition, the power $p/2$ in the error estimate \eqref{eq:error_smallp} of \Cref{thm:FE-error-Chow},
\[
\|u - u_h \|_{\widetilde W^s_p(\Omega)} 
\lesssim \inf_{v_h \in \mathbb{V}_h} \|u - v_h \|_{\widetilde W^s_p(\Omega)}^{p/2},
\] 
might not be optimal as well. These two reasons together lead to the theoretical suboptimal rate $p/4$
of \eqref{eq:error_smallp}.
\end{example}

\subsection{Linear operators with finite horizon} \label{sec:exp-var}

We consider the operator $\calA_\delta u$ defined in \eqref{eq:FH-operator} and let $p = 2$, that gives rise to the linear fractional Laplacian with finite horizon and variable diffusivity.

\begin{example}[truncated fractional Laplacian in $1$D]\label{ex:boundary-1d-var}
	Consider $\Omega = (-1, 1)$, $p = 2, \delta = 0.2, f = \frac{\delta^{2-2s}}{1-s}$ and $\psi = \chi_{[0,1]}$ where $\chi_I$ is the characteristic function of $I$. The resulting operator is the truncated fractional Laplacian we discussed in  \Cref{R:Sobolev-delta}.
	We first compute numerical solutions for different $s$ using a mesh graded according to \eqref{eq:mesh_grad} with $h = 2^{-12},$ $\mu = 2$ to investigate the boundary behavior of solutions.  \Cref{fig:frac-Lap-var-1d} displays the solutions we obtained for several values of $s$.
	\begin{figure}[!htb]
		\begin{center}
			\includegraphics[width=0.7\linewidth]{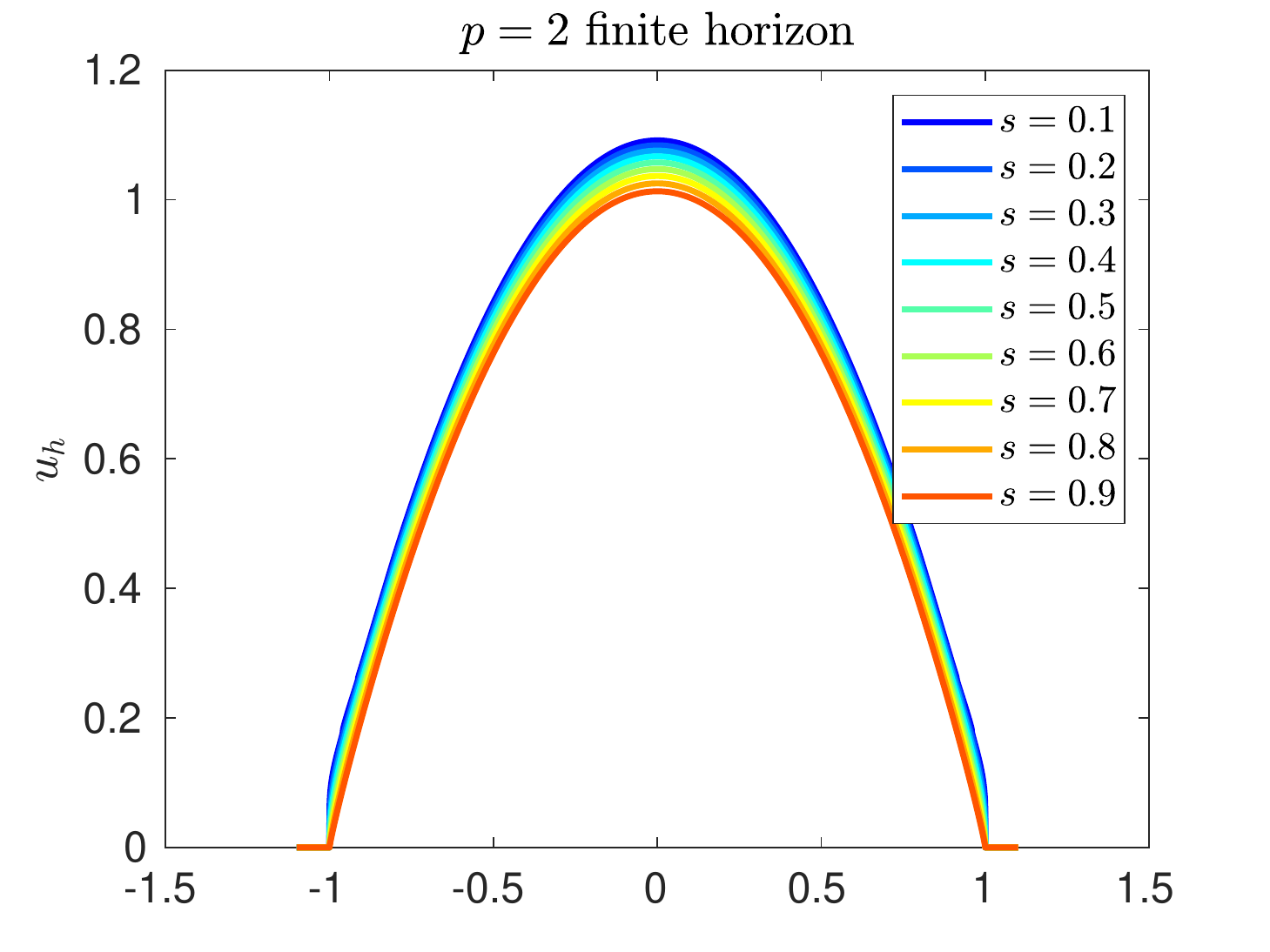}
		\end{center}
		\caption{Example \ref{ex:boundary-1d-var}: Numerical solutions of linear operator with finite horizon for different values of $s$.}
		\label{fig:frac-Lap-var-1d}
	\end{figure}
	
Assuming the solutions have an algebraic boundary behavior
	\[
	u(x) \approx \dist(x, \partial \Omega)^{\alpha(s)},
	\]
we estimate $\alpha(s)$ numerically in \Cref{table:frac-Lap-var-1d-bd-exponent}. We clearly observe $\alpha(s) \approx s$; this is the same as for the fractional Laplacian $(-\Delta)^s$.
	\begin{table}[!htbp]
		\centering\scalebox{0.95}{
			\begin{tabular}{|c |c| c| c| c| c| c| c| c| c| c|}
				\hline 
				Value of $s$ & 0.1 & 0.2 & 0.3 & 0.4 & 0.5 & 0.6 & 0.7 & 0.8 & 0.9 \\ \hline
				$\alpha(s)$ & 0.10 & 0.19 & 0.29 & 0.39 & 0.49 & 0.60 & 0.70 & 0.80 & 0.90    \\ \hline		  				
			\end{tabular}
		}
                \vskip0.2cm
		\caption{Example \ref{ex:boundary-1d-var}: Boundary exponents $\alpha(s)$ for different values of $s$. They confirm $u(x) \approx \dist (x,\partial\Omega)^s$.}
		\label{table:frac-Lap-var-1d-bd-exponent}
	\end{table}	

	Next, we measure convergence rates for different $s$ on uniform meshes for $h$ from $2^{-8}$ to $2^{-12}$. Since we do not know a closed formula for the solution $u$, we use $\Vert u_h - u_{h/2} \Vert_{\wt{H}^s(\Omega)}$ as a proxy for $\Vert u_h - u \Vert_{\wt{H}^s(\Omega)}$ and present the rates in terms of $h$ in \Cref{table:frac-Lap-var-1d-rates}. We observe the convergence rates are about $0.5$ for all $s$, in agreement with the regularity $u \in \wt{H}^{s+\frac12-\eps}(\Omega)$ proved in \Cref{C:FH} (maximal Sobolev regularity) and a standard best-approximation argument. 
	\begin{table}[htbp]
	\centering\scalebox{0.95}{
		\begin{tabular}{|c |c| c| c| c| c| c| c| c| c| c|}
			\hline 
			Value of $s$ & 0.1 & 0.2 & 0.3 & 0.4 & 0.5 & 0.6 & 0.7 & 0.8 & 0.9 \\ \hline
			Rate &  0.530 & 0.508 & 0.500 & 0.498 & 0.497 & 0.498 & 0.498 & 0.498 & 0.501 \\ \hline	
		\end{tabular}
	}
        \vskip0.2cm
	\caption{\Cref{ex:boundary-1d-var}: Optimal convergence rates on uniform meshes for different values of $s$.}
	\label{table:frac-Lap-var-1d-rates}
\end{table}	
\end{example}

\bibliographystyle{abbrv}
\bibliography{ps-lap.bib}

\begin{thebibliography}{10}

\bibitem{AbelsGrubb}
H.~Abels and G.~Grubb.
\newblock Fractional-order operators on nonsmooth domains.
\newblock {\em Journal of the London Mathematical Society}, 107(4):1297--1350,
  2023.

\bibitem{AcosBort2017fractional}
G.~Acosta and J.~Borthagaray.
\newblock A fractional {L}aplace equation: regularity of solutions and finite
  element approximations.
\newblock {\em SIAM J. Numer. Anal.}, 55(2):472--495, 2017.

\bibitem{AdamsFournier:2003}
R.~Adams and J.~Fournier.
\newblock {\em Sobolev spaces}.
\newblock Elsevier, 2003.

\bibitem{Akagi:18}
G.~Akagi, G.~Schimperna, A.~Segatti, and L.~V. Spinolo.
\newblock Quantitative estimates on localized finite differences for the
  fractional {Poisson} problem, and applications to regularity and spectral
  stability.
\newblock {\em Commun. Math. Sci.}, 16(4):913--961, 2018.

\bibitem{Atkinson:74}
C.~Atkinson and C.~Jones.
\newblock Similarity solutions in some non-linear diffusion problems and in
  boundary-layer flow of a pseudo-plastic fluid.
\newblock {\em Quart. J. Mech. Appl. Math.}, 27(2):193--211, 1974.

\bibitem{Barrett93}
J.~Barrett and W.~Liu.
\newblock Finite element approximation of the $p$-{L}aplacian.
\newblock {\em Math. Comp.}, 61(204):523--537, 1993.

\bibitem{Benedikt:18}
J.~Benedikt, P.~Girg, L.~Kotrla, and P.~Tak{\'a}{\v{c}}.
\newblock Origin of the {{\(p\)}}-{Laplacian} and {A}. {Missbach}.
\newblock {\em Electron. J. Differ. Equ.}, 2018:17, 2018.
\newblock Id/No 16.

\bibitem{Benson:00}
D.~A. Benson, S.~W. Wheatcraft, and M.~M. Meerschaert.
\newblock Application of a fractional advection-dispersion equation.
\newblock {\em Water resources research}, 36(6):1403--1412, 2000.

\bibitem{BerghLofstrom}
J.~Bergh and J.~L\"ofstr\"om.
\newblock {\em Interpolation spaces: an introduction}.
\newblock Springer-Verlag, Berlin, 1976.

\bibitem{BoLiNo-Barrett}
J.~Borthagaray, W.~Li, and R.~Nochetto.
\newblock Fractional elliptic problems on {L}ipschitz domains: Regularity and
  approximation.
\newblock {\em {A$^3$N$^2$M: Approximation, Applications, and Analysis of
  Nonlocal, Nonlinear Models Proceedings of the 50th John H. Barrett Memorial
  Lectures}}, 2023.

\bibitem{BoNo21}
J.~Borthagaray and R.~Nochetto.
\newblock {Besov regularity for the Dirichlet integral fractional Laplacian in
  Lipschitz domains}.
\newblock {\em J. Funct. Anal.}, 284(6):109829, 2023.

\bibitem{BoNo21Constructive}
J.~Borthagaray and R.~Nochetto.
\newblock {Constructive approximation on graded meshes for the integral
  fractional Laplacian}.
\newblock {\em Constr. Approx.}, 57:463--487, 2023.

\bibitem{BoNoSa18}
J.~Borthagaray, R.~Nochetto, and A.~Salgado.
\newblock Weighted sobolev regularity and rate of approximation of the obstacle
  problem for the integral fractional {L}aplacian.
\newblock {\em Math. Models Methods Appl. Sci.}, 29(14):2679--2717, 2019.

\bibitem{BourBrezMiro2001another}
J.~Bourgain, H.~Brezis, and P.~Mironescu.
\newblock Another look at {S}obolev spaces.
\newblock In {\em Optimal Control and Partial Differential Equations}, pages
  439--455, 2001.

\bibitem{MR3558212}
L.~Brasco and E.~Lindgren.
\newblock Higher {S}obolev regularity for the fractional {$p$}-{L}aplace
  equation in the superquadratic case.
\newblock {\em Adv. Math.}, 304:300--354, 2017.

\bibitem{Burkovska:19}
O.~Burkovska and M.~Gunzburger.
\newblock Regularity analyses and approximation of nonlocal variational
  equality and inequality problems.
\newblock {\em J. Math. Anal. Appl.}, 478(2):1027--1048, 2019.

\bibitem{Caffarelli:12}
L.~Caffarelli.
\newblock Non-local diffusions, drifts and games.
\newblock In {\em Nonlinear partial differential equations. The Abel symposium
  2010. Proceedings of the Abel symposium, Oslo, Norway, September 28--October
  2, 2010}, pages 37--52. Berlin: Springer, 2012.

\bibitem{Chow89}
S.-S. Chow.
\newblock Finite element error estimates for non-linear elliptic equations of
  monotone type.
\newblock {\em Numer. Math.}, 54(4):373--393, 1989.

\bibitem{Ciarlet}
P.~Ciarlet.
\newblock {\em The finite element method for elliptic problems}, volume~40.
\newblock Siam, 2002.

\bibitem{CiarletJr}
P.~Ciarlet, Jr.
\newblock {Analysis of the Scott-Zhang interpolation in the fractional order
  Sobolev spaces}.
\newblock {\em J. Numer. Math.}, 21(3):173--180, 2013.

\bibitem{Rama:04}
R.~Cont and P.~Tankov.
\newblock {\em Financial modelling with jump processes.}
\newblock Chapman Hall/CRC Financ. Math. Ser. Boca Raton, FL: Chapman {and}
  Hall/CRC, 2004.

\bibitem{delTeso:21}
F.~del Teso, D.~G\'{o}mez-Castro, and J.~V\'{a}zquez.
\newblock Three representations of the fractional {$p$}-{L}aplacian: semigroup,
  extension and {B}alakrishnan formulas.
\newblock {\em Fract. Calc. Appl. Anal.}, 24(4):966--1002, 2021.

\bibitem{delTeso:23}
F.~del Teso, M.~Medina, and P.~Ochoa.
\newblock Higher-order asymptotic expansions and finite difference schemes for
  the fractional $ p $-laplacian.
\newblock {\em arXiv preprint arXiv:2303.02502}, 2023.

\bibitem{Deng:18}
W.~Deng, B.~Li, W.~Tian, and P.~Zhang.
\newblock Boundary problems for the fractional and tempered fractional
  operators.
\newblock {\em Multiscale Model. Simul.}, 16(1):125--149, 2018.

\bibitem{Diaz:94}
J.~Diaz and F.~De~Thelin.
\newblock On a nonlinear parabolic problem arising in some models related to
  turbulent flows.
\newblock {\em SIAM J. Math. Anal.}, 25(4):1085--1111, 1994.

\bibitem{Diaz:06}
J.~I. D{\'{\i}}az, G.~Hetzer, and L.~Tello.
\newblock An energy balance climate model with hysteresis.
\newblock {\em Nonlinear Anal., Theory Methods Appl., Ser. A, Theory Methods},
  64(9):2053--2074, 2006.

\bibitem{DyKa13}
B.~Dyda and M.~Kassmann.
\newblock {On weighted Poincar{\'e} inequalities}.
\newblock {\em Ann. Acad. Sci. Fenn. Math}, 38(2):721--726, 2013.

\bibitem{Faermann2}
B.~Faermann.
\newblock Localization of the {A}ronszajn-{S}lobodeckij norm and application to
  adaptive boundary element methods. {I}. {T}he two-dimensional case.
\newblock {\em IMA J. Numer. Anal.}, 20(2):203--234, 2000.

\bibitem{Faermann}
B.~Faermann.
\newblock Localization of the {A}ronszajn-{S}lobodeckij norm and application to
  adaptive boundary element methods. {II}. {T}he three-dimensional case.
\newblock {\em Numer. Math.}, 92(3):467--499, 2002.

\bibitem{FernandezBonder:22}
J.~{Fern{\'a}ndez Bonder}, A.~Salort, and H.~Vivas.
\newblock Interior and up to the boundary regularity for the fractional
  g-laplacian: The convex case.
\newblock {\em Nonlinear Analysis}, 223:113060, 2022.

\bibitem{GlMa75}
R.~Glowinski and A.~Marroco.
\newblock Sur l'approximation, par {\'e}l{\'e}ments finis d'ordre un, et la
  r{\'e}solution, par p{\'e}nalisation-dualit{\'e} d'une classe de
  probl{\`e}mes de {D}irichlet non lin{\'e}aires.
\newblock {\em ESAIM: Mathematical Modelling and Numerical
  Analysis-Mod{\'e}lisation Math{\'e}matique et Analyse Num{\'e}rique},
  9(R2):41--76, 1975.

\bibitem{Grisvard}
P.~Grisvard.
\newblock {\em Elliptic problems in nonsmooth domains}, volume~24 of {\em
  Monographs and Studies in Mathematics}.
\newblock Pitman (Advanced Publishing Program), Boston, MA, 1985.

\bibitem{Grubb}
G.~Grubb.
\newblock Fractional {L}aplacians on domains, a development of {H}\"ormander's
  theory of $\mu$-transmission pseudodifferential operators.
\newblock {\em Adv. Math.}, 268:478--528, 2015.

\bibitem{Iannizzotto:14}
A.~Iannizzotto, S.~Mosconi, and M.~Squassina.
\newblock Global {H}\"{o}lder regularity for the fractional {$p$}-{L}aplacian.
\newblock {\em Rev. Mat. Iberoam.}, 32(4):1353--1392, 2016.

\bibitem{MR4109087}
A.~Iannizzotto, S.~J.~N. Mosconi, and M.~Squassina.
\newblock Fine boundary regularity for the degenerate fractional
  {$p$}-{L}aplacian.
\newblock {\em J. Funct. Anal.}, 279(8):108659, 54, 2020.

\bibitem{Langlands:09}
T.~A.~M. Langlands, B.~I. Henry, and S.~L. Wearne.
\newblock Fractional cable equation models for anomalous electrodiffusion in
  nerve cells: {Infinite} domain solutions.
\newblock {\em J. Math. Biol.}, 59(6):761--808, 2009.

\bibitem{Langlands:11}
T.~A.~M. Langlands, B.~I. Henry, and S.~L. Wearne.
\newblock Fractional cable equation models for anomalous electrodiffusion in
  nerve cells: finite domain solutions.
\newblock {\em SIAM J. Appl. Math.}, 71(4):1168--1203, 2011.

\bibitem{Levendorskii:04}
S.~Z. Levendorski{\u{\i}}.
\newblock Pricing of the {American} put under {L{\'e}vy} processes.
\newblock {\em Int. J. Theor. Appl. Finance}, 7(3):303--335, 2004.

\bibitem{Mazya_BBM}
V.~Maz'ya and T.~Shaposhnikova.
\newblock On the {B}ourgain, {B}rezis, and {M}ironescu theorem concerning
  limiting embeddings of fractional {S}obolev spaces.
\newblock {\em Journal of Functional Analysis}, 195(2):230 -- 238, 2002.

\bibitem{mclean2000strongly}
W.~McLean.
\newblock {\em Strongly elliptic systems and boundary integral equations}.
\newblock Cambridge university press, 2000.

\bibitem{MetzlerKlafter}
R.~Metzler and J.~Klafter.
\newblock The restaurant at the end of the random walk: recent developments in
  the description of anomalous transport by fractional dynamics.
\newblock {\em Journal of Physics A: Mathematical and Theoretical},
  37(31):R161--R208, 2004.

\bibitem{Mosconi:16}
S.~Mosconi and M.~Squassina.
\newblock Recent progresses in the theory of nonlinear nonlocal problems.
\newblock In {\em Bruno {P}ini {M}athematical {A}nalysis {S}eminar 2016},
  volume~7 of {\em Bruno Pini Math. Anal. Semin.}, pages 147--164. Univ.
  Bologna, Alma Mater Stud., Bologna, 2016.

\bibitem{Savare98}
G.~Savar{\'e}.
\newblock Regularity results for elliptic equations in {L}ipschitz domains.
\newblock {\em J. Funct. Anal.}, 152(1):176--201, 1998.

\bibitem{Tartar}
L.~Tartar.
\newblock Interpolation non lin\'{e}aire et r\'{e}gularit\'{e}.
\newblock {\em J. Functional Analysis}, 9:469--489, 1972.

\bibitem{Triebel10}
H.~Triebel.
\newblock {\em Theory of Function Spaces}.
\newblock Modern Birkh{\"a}user Classics. Springer Basel, 2010.

\bibitem{Valdinoci}
E.~Valdinoci.
\newblock From the long jump random walk to the fractional {Laplacian}.
\newblock {\em Bol. Soc. Esp. Mat. Apl., S\(\vec{\text{e}}\)MA}, 49:33--44,
  2009.

\bibitem{VishikEskin}
M.~I. Vi{\v{s}}ik and G.~I. {\`E}skin.
\newblock Convolution equations in a bounded region.
\newblock {\em Uspehi Mat. Nauk}, 20(3 (123)):89--152, 1965.
\newblock English translation in {\em Russian Math. Surveys}, 20:86-151, 1965.

\end{thebibliography}
\end{document}